\newtheorem{theorem}{Theorem}[section]
\newtheorem{lemma}[theorem]{Lemma}
\newtheorem{remark}[theorem]{Remark}
\newtheorem*{remark*}{Remark}
\renewenvironment{proof}[1][\proofname]{\medskip \noindent {\bfseries #1. }}{\hfill \qedsymbol \medskip}
\numberwithin{equation}{section}
\numberwithin{figure}{section}
\numberwithin{table}{section}
\newcommand{\bfi}{\mathbf{i}}
\newcommand{\rmd}{\mathrm{d}}
\DeclareRobustCommand{\SkipTocEntry}[5]{}
\newcommand{\mR}{\mathbb{R}}   
\newcommand{\mC}{\mathbb{C}}   
\newcommand{\mS}{\mathbb{S}}   
\newcommand{\abs}[1]{\lvert #1 \rvert}  
\newcommand{\norm}[1]{\lVert #1 \rVert}  
\newcommand{\br}[1]{\langle #1 \rangle}  
\newcommand{\ol}[1]{\overline{#1}}
\newcommand{\ehat}{\,\widehat{\rule{0pt}{6pt}}\,}
\newcommand{\eps}{\epsilon}
\newcommand{\p}{\partial}
\DeclareMathOperator{\supp}{supp}
\begin{document}

\title[Increasing resolution/instability for linear inverse problems]{Increasing resolution and instability for linear inverse scattering problems}

\author[P.-Z. Kow]{Pu-Zhao Kow\,\orcidlink{0000-0002-2990-3591}}
\address{Department of Mathematical Sciences, National Chengchi University, No. 64, Sec. 2, ZhiNan Rd., Wenshan District, 116302 Taipei, Taiwan}
\email{pzkow@g.nccu.edu.tw}

\author[M. Salo]{Mikko Salo\,\orcidlink{0000-0002-3681-6779}}
\address{Department of Mathematics and Statistics, P.O. Box 35 (MaD), FI-40014 University of Jyv\"{a}skyl\"{a}, Finland}
\email{mikko.j.salo@jyu.fi}

\author[S. Zou]{Sen Zou}
\address{School of Mathematical Science, No.220 Road Handan, Shanghai, Fudan University, 200433, China}
\email{szou18@fudan.edu.cn}

\subjclass[2020]{Primary: 35P15, 35R25, 35R30. Secondary: 35J05, 35J15}
\keywords{linear inverse problems, instability mechanisms, increasing stability/resolution, singular value, Agmon-H\"{o}rmander, Courant min-max principle}

\maketitle

\begin{abstract}
In this work we study the increasing resolution of linear inverse scattering problems at a large fixed frequency. We consider the problem of recovering the density of a Herglotz wave function, and the linearized inverse scattering problem for a potential. It is shown that the number of features that can be stably recovered (stable region) becomes larger as the frequency increases, whereas one has strong instability for the rest of the features (unstable region). To show this rigorously, we prove that the singular values of the forward operator stay roughly constant in the stable region and decay exponentially in the unstable region. The arguments are based on structural properties of the problems and they involve the Courant min-max principle for singular values, quantitative Agmon-H\"ormander estimates, and a Schwartz kernel computation based on the coarea formula.
\end{abstract}

\tableofcontents

\begin{sloppypar}

\section{Introduction}

Ill-posedness, or instability, is a central feature of many inverse problems. The associated high sensitivity to noise needs to be taken into account in the design of reliable reconstruction algorithms. We refer the reader to \cite{KRS21InstabilityMechanism} for a discussion of instability in various inverse problems, and to \cite{EKN89TikhonovRegularization} for an overview of regularization theory that addresses this issue. 

In some cases, inverse problems involve parameters that may have an effect on their stability properties. A rigorous analysis of this \emph{increasing stability} phenomenon was initiated by Victor Isakov in the case of unique continuation for the Helmholtz equation \cite{HI04IncreasingStabilityHelmholtz}. There are many subsequent works on increasing stability for inverse problems \cite{Isa11increasingstability,IN12EnergyRegularitymultidimensionGelfand,INUW14increasingstability,ILW16IncreasingStability,IW21IncreasingStabilitySource,KW22RefinedInstability,NUW13IncreasingStabilityHelmholtz,Santacesaria13IncreasingStability,Santacesaria15IncreasingStability2D}. 
The increasing stability phenomenon for the Helmholtz equation was also studied using the Bayesian approach in \cite{KW24IncreasingStabilityBayesian}.

In particular, \cite{Isa11increasingstability} considers the inverse problem of determining a potential $q$ from the Dirichlet-to-Neumann map $\Lambda_{q}$ for the Helmholtz equation $(\Delta + \kappa^{2} + q)u=0$, where $\kappa>0$ is a large frequency, and proves a conditional stability estimate of the form 
\begin{equation}
\norm{q_{1}-q_{2}}_{L^{2}} \le \omega_{\text{\rm H{\"o}l}}(\norm{\Lambda_{q_{1}}-\Lambda_{q_{2}}}) + \omega_{\rm Log}(\norm{\Lambda_{q_{1}}-\Lambda_{q_{2}}}). \label{eq:stability-estimate}
\end{equation}
Here $\omega_{\text{\rm H{\"o}l}}$ and $\omega_{\rm Log}$ are H\"older and logarithmic moduli of continuity, respectively, and the point is that the logarithmic part formally goes to zero as $\kappa\rightarrow\infty$. 

In this article we take an alternative point of view to the increasing stability phenomenon: there may be a number of features (e.g.\ Fourier modes) that can be reconstructed in a stable way, and the number of these stable features increases with the parameter. This could be called \emph{increasing resolution} instead of increasing stability. It turns out that stability estimates of the form \eqref{eq:stability-estimate} may follow from such an increasing resolution analysis. For linear inverse source problems such ideas appear already in \cite{BaoLinTriki2010} and subsequent works.

We give a rigorous study of the increasing resolution phenomenon for two model inverse problems related to scattering phenomena. These problems are addressed in $\mathbb{R}^n$, which avoids the issues with Dirichlet eigenvalues on bounded domains. Moreover, we consider linear inverse problems to reduce matters to singular value estimates. In each problem the measurement is related to some Fourier transform in a ball whose radius grows with the frequency $\kappa$, which indicates that the resolution should increase.

In this article we quantify the increasing resolution phenomenon precisely in terms of the asymptotics of singular values of the forward operator. For the truncated Fourier transform itself  singular value estimates are classical (see \cite{Bonami} and references therein). For related inverse source problems such estimates are given in \cite{BaoLinTriki2010, GriesmaierHankeSylvester2014, GriesmaierSylvester2017, GriesmaierSylvester2017_3D,GriesmaierSylvester2018_Maxwell_elasticity, Mirza1, Mirza2}. The main point of this work is to introduce methods that do not rely on explicit Bessel function estimates, but rather employ structural properties of the inverse problem such as stability and smoothing estimates to study singular value asymptotics.  We note that a similar approach to quantify increasing resolution could be applied to any linear inverse problem, regardless of whether the measurement is related to some Fourier transform or not. 

Our results show that there is a \emph{stable region} where the singular values stay roughly constant as $\kappa\rightarrow\infty$, and an \emph{unstable region} where they decay exponentially. We give optimal estimates for the size of the stable region, and show that the stable region grows (i.e.\ the number of stable features increases) as the frequency $\kappa$ increases. For singular values in the unstable region we show decay rates that lead to instability results as in \cite{KRS21InstabilityMechanism}. 

\addtocontents{toc}{\SkipTocEntry}
\subsection{Unique continuation for Herglotz wave functions} 

As a warmup problem we first investigate the inverse problem of recovering the density of a Herglotz wave function from its values in the unit ball. 
The forward operator will be a scaled version of the adjoint of the operator $\mathcal{F}_{B_R(0)}$ studied in \cite{GriesmaierSylvester2017, GriesmaierSylvester2017_3D}. 
Thus the Bessel function estimates in those works yield singular value estimates as in \Cref{thm:1} at least for $n=2,3$. See also \cite{GriesmaierSylvester2018_Maxwell_elasticity} for similar results for electromagnetic and elastic waves. However, the methods for proving \Cref{thm:1} are based on structural properties of the problem instead of explicit Bessel function estimates, and hence they will be applicable to other inverse problems such as the linearized inverse scattering problem for a potential in \Cref{thm:2}.

Given any $f\in L^{2}(\mS^{n-1})$, the corresponding Herglotz wave function is defined by 
\begin{equation*}
(P_{\kappa}f)(x) := \int_{\mS^{n-1}} e^{\bfi \kappa \omega \cdot x}f(\omega) \, \rmd S(\omega) = (f\,\rmd S)\ehat (-\kappa x) \quad \text{for all $x\in\mR^{n}$.} 
\end{equation*}
This function solves $(\Delta + \kappa^{2})(P_{\kappa}f)=0$ in $\mR^{n}$. Now define the linear map 
\begin{equation}
A_{\kappa} : L^{2}(\mS^{n-1}) \rightarrow L^{2}(B_{1}) ,\quad A_{\kappa}(f) := \kappa^{\frac{n-1}{2}} \left. P_{\kappa}f \right|_{B_{1}}, \label{eq:linear-injective-compact-map1}
\end{equation}
The normalizing constant $\kappa^{\frac{n-1}{2}}$ in the definition of $A_{\kappa}$ will simplify the statements below. By the Agmon-H\"ormander estimate \cite[Theorem~2.1]{AgmonHormander} we have 
\begin{equation}
\norm{A_{\kappa}f}_{L^{2}(B_{1})} \le C(n) \norm{f}_{L^{2}(\mS^{n-1})}. \label{eq:normalized-AG-inequality}
\end{equation}
This shows that $A_{\kappa}$ is bounded. Elliptic regularity shows that $A_{\kappa}$ maps $L^{2}(\mS^{n-1})$ to $H^{s}(B_{1})$ for any $s>0$, and hence \eqref{eq:linear-injective-compact-map1} is compact by compact Sobolev embedding. Moreover, the analyticity of $(f\,\rmd S)\ehat$, or alternatively the unique continuation principle applied to the solution $P_{\kappa}f$, implies that $f$ is uniquely determined by $A_{\kappa}f$. Thus \eqref{eq:linear-injective-compact-map1} is injective and it has a sequence of singular values $\sigma_{j} = \sigma_{j}(A_{\kappa})$ with $\sigma_{1} \ge \sigma_{2} \ge \cdots \rightarrow 0$. 

We are interested in the behavior of the singular values $\sigma_{j}$ of \eqref{eq:linear-injective-compact-map1} with explicit depedence on the key parameter $\kappa$. We will prove that the singular values are roughly constant in the region $j \lesssim \kappa^{n-1}$, called the \emph{stable region}, whereas they decay exponentially in the \emph{unstable region} where $j \gtrsim \kappa^{n-1}$. Here and below, we write $A \lesssim B$ (resp.\ $A \gtrsim B$ or $A \sim B$) for $A \leq CB$ (resp.\ $A \geq C^{-1}B$ or $C^{-1} A \leq B \leq C A$) where $C$ is a constant independent of asymptotic parameters (here $j$ and $\kappa$).

\begin{theorem}\label{thm:1}
Let $n\ge 2$ and $\kappa \ge 1$. The singular values $\sigma_{j}(A_{\kappa})$ of \eqref{eq:linear-injective-compact-map1} satisfy 
\begin{subequations}
\begin{align}
& \sigma_{j}(A_{\kappa}) \sim 1, && \text{for all} \quad j \lesssim \kappa^{n-1}, \label{eq:stable-region-Herglotz} \\
& \sigma_{j}(A_{\kappa}) \lesssim \exp \left( -c \kappa^{-1} j^{\frac{1}{n-1}} \right), && \text{for all} \quad j \gtrsim \kappa^{n-1}, \label{eq:unstable-region-Herglotz}
\end{align}
\end{subequations}
where the constant $c>0$ and the implied constants are independent of $\kappa$ and $j$. 
\end{theorem}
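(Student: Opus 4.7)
The plan is to prove the stable and unstable region asymptotics separately using the two forms of the Courant min-max principle for singular values,
\[
\sigma_j(A_\kappa) \,=\, \max_{\substack{V \subset L^2(\mS^{n-1}) \\ \dim V = j}} \min_{\substack{f \in V \\ \norm{f}=1}} \norm{A_\kappa f}_{L^2(B_1)} \,=\, \min_{\mathrm{rank}(T) \le j-1} \norm{A_\kappa - T}_{\mathrm{op}},
\]
where the first supplies lower bounds via explicit subspaces and the second supplies upper bounds via low-rank approximations. The uniform bound $\sigma_j(A_\kappa) \lesssim 1$ (which gives the upper half of \eqref{eq:stable-region-Herglotz}) is immediate from the Agmon-H\"ormander estimate \eqref{eq:normalized-AG-inequality}, since $\sigma_j \le \sigma_1 = \norm{A_\kappa}_{\mathrm{op}} \le C(n)$.

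A key structural observation is that $A_\kappa$ intertwines the $SO(n)$-action, so by Schur's lemma $A_\kappa^* A_\kappa$ acts as a scalar $\lambda_\ell$ on each spherical harmonic space $\mathcal{H}_\ell$. Consequently the singular values of $A_\kappa$ are the values $\sqrt{\lambda_\ell}$ repeated with multiplicity $\dim\mathcal{H}_\ell \sim \ell^{n-2}$, and the Herglotz waves $P_\kappa Y_\ell$ with different $\ell$ are orthogonal in $L^2(B_1)$. For the unstable upper bound \eqref{eq:unstable-region-Herglotz} I would expand
\[
P_\kappa Y_\ell(x) \,=\, \sum_{k \ge 0} \frac{(\bfi\kappa)^k}{k!} \int_{\mS^{n-1}} (\omega \cdot x)^k Y_\ell(\omega)\,\rmd S(\omega),
\]
and use that a degree-$\ell$ spherical harmonic is orthogonal in $L^2(\mS^{n-1})$ to every polynomial of degree $<\ell$, so that all terms with $k<\ell$ vanish. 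For $\abs{x}\le 1$ and $\ell \gtrsim \kappa$, Stirling gives
\[
\abs{P_\kappa Y_\ell(x)} \,\lesssim\, \sum_{k\ge\ell} \frac{\kappa^k}{k!}\norm{Y_\ell}_{L^2(\mS^{n-1})} \,\lesssim\, (e\kappa/\ell)^\ell \norm{Y_\ell}_{L^2(\mS^{n-1})},
\]
hence $\sqrt{\lambda_\ell} \lesssim \kappa^{(n-1)/2}(e\kappa/\ell)^\ell \lesssim e^{-c\ell/\kappa}$ once $\ell \ge c_0 \kappa$. Taking $T = A_\kappa \Pi_L$ with $\Pi_L$ the projection onto $\bigoplus_{\ell \le L} \mathcal{H}_\ell$ (so $\mathrm{rank}(T) \sim L^{n-1}$) and choosing $L \sim j^{1/(n-1)}$ with $j \gtrsim \kappa^{n-1}$ yields the stated decay $\sigma_j \lesssim \exp(-c\kappa^{-1}j^{1/(n-1)})$.

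For the lower bound in the stable region, the starting point is the Schwartz kernel of $A_\kappa A_\kappa^*$ on $B_1 \times B_1$,
\[
K(x,y) \,=\, \kappa^{n-1} \int_{\mS^{n-1}} e^{\bfi\kappa\omega\cdot(x-y)}\,\rmd S(\omega),
\]
which immediately gives $\mathrm{tr}(A_\kappa A_\kappa^*) = \abs{\mS^{n-1}}\abs{B_1}\kappa^{n-1}$. A coarea-style change of variables $z = y-x$ followed by $\zeta = \kappa z$, combined with the Agmon-H\"ormander inequality applied to the constant density $f\equiv 1$, also yields the Hilbert--Schmidt bound $\mathrm{tr}((A_\kappa A_\kappa^*)^2) \lesssim \kappa^{n-1}$. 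Combining these two-sided trace estimates with the universal $\lambda_\ell \le C$ and the exponential decay of $\lambda_\ell$ for $\ell \gtrsim \kappa$ established above (so that the tail $\sum_{\ell > C\kappa}\dim\mathcal{H}_\ell\,\lambda_\ell$ is a negligible fraction of the trace) forces $\lambda_\ell$ to be bounded below by a positive constant on an initial band of $\ell$'s of total multiplicity $\sim \kappa^{n-1}$, and the Courant max-min then gives $\sigma_j \gtrsim 1$ on that range. I expect this last step to be the main obstacle: pure trace identities do not rule out the mass being concentrated on a thin band of $\ell$'s, and converting the global trace information into a pointwise lower bound on individual $\lambda_\ell$'s requires an additional input, most naturally a quantitative version of the Agmon-H\"ormander identity obtained from the rescaling $\norm{A_\kappa f}^2 = \kappa^{-1}\norm{P_1 f}_{L^2(B_\kappa)}^2$ together with a control of the rate of convergence to the asymptotic limit $c_n\norm{f}^2$ in terms of the spherical-harmonic content of $f$.
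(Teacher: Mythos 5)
Your treatment of the unstable region \eqref{eq:unstable-region-Herglotz} is correct but follows a genuinely different route from the paper. You exploit the rotation invariance of $A_\kappa^*A_\kappa$ (the diagonalization in spherical harmonics, which the paper only records in \Cref{rem:Jacobi-Anger} via Jacobi--Anger; for $n=2$ you should invoke the full $O(2)$-invariance, or simply that the kernel depends only on $\omega\cdot\theta$), together with the vanishing of the first $\ell$ moments of $Y_\ell$ against polynomials of degree $<\ell$ and Stirling's bound, and then pass to singular values through a rank-$\sim L^{n-1}$ truncation. The paper instead avoids any use of the explicit eigenbasis and proves an abstract smoothing estimate $\norm{A_\kappa f}_{L^2(B_1)}\le C(Cm\kappa)^{2m}\norm{f}_{H^{-2m}(\mS^{n-1})}$ (\Cref{lem:compute-coefficient,lem:smoothing-Herglotz}) and optimizes over $m$ in the Courant min-max principle; that method is designed to transfer to $F_\kappa$ in \Cref{thm:2}, where no explicit diagonalization is available. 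Your argument is more elementary for this specific operator and in fact gives the stronger decay $\exp(-cj^{1/(n-1)})$ deep in the unstable region, which certainly implies \eqref{eq:unstable-region-Herglotz}; the constant in ``$j\gtrsim\kappa^{n-1}$'' and the absorption of the prefactor $\kappa^{(n-1)/2}$ require $\ell\ge c_0(n)\kappa$ with $c_0(n)$ chosen suitably, but this is routine bookkeeping.

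The genuine gap is the lower bound in the stable region \eqref{eq:stable-region-Herglotz}, which your proposal does not prove: after setting up the trace identities you explicitly defer the decisive step to ``a quantitative version of the Agmon--H\"ormander identity'' controlling the rate of convergence in \eqref{eq:qualitative-inversion} in terms of the spherical-harmonic content of $f$. That unproven input is exactly the technical heart of the paper's proof, namely \Cref{lem:quantative-inversion} (proved there by splitting the convolution kernel on the sphere into the pieces $I_1,I_2,I_3$ and estimating each), which is then combined with the max-min principle on the span of the first $\sim\kappa^{n-1}$ eigenfunctions. Assuming it, rather than proving it, leaves \eqref{eq:stable-region-Herglotz} unestablished.

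It is worth noting, however, that your stated reason for abandoning the trace route is not a real obstruction, because the theorem does not require a pointwise lower bound on each $\lambda_\ell$ with $\ell\lesssim\kappa$; it only requires that at least $c\,\kappa^{n-1}$ eigenvalues of $A_\kappa^*A_\kappa$ (counted with multiplicity, whichever $\ell$ they sit on) exceed a fixed constant, since the singular values are then $\gtrsim 1$ up to that index. With the ingredients you already have this follows by a Chebyshev-type count: the exact trace equals $|\mS^{n-1}|\,|B_1|\,\kappa^{n-1}$; each eigenvalue satisfies $\lambda_\ell\le C(n)$ by \eqref{eq:normalized-AG-inequality}; and your moment bound gives $\lambda_\ell\le C\kappa^{n-1}(e\kappa/\ell)^{2\ell}$, so choosing $c_0$ large makes the contribution of all $\ell\ge c_0\kappa$ at most a quarter of the trace, uniformly in $\kappa\ge1$. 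Since the total multiplicity of the modes with $\ell<c_0\kappa$ is $\le C_2\kappa^{n-1}$, the modes there with $\lambda_\ell<\eps$ contribute at most $\eps C_2\kappa^{n-1}$, and taking $\eps$ small forces at least $c\,\kappa^{n-1}$ eigenvalues to be $\ge\eps$, hence $\sigma_j\gtrsim 1$ for $j\le c\,\kappa^{n-1}$. So your plan can be completed without the quantitative Agmon--H\"ormander lemma (at the price of losing the information, noted in \Cref{rem:Jacobi-Anger}, that the stable features are precisely the low-order modes), but as written the proposal proves neither this counting argument nor the lemma, and therefore does not yet prove \eqref{eq:stable-region-Herglotz}.
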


See also \Cref{sec:numerical} for related numerical experiments. We note that the bounds \eqref{eq:stable-region-Herglotz} and \eqref{eq:unstable-region-Herglotz} match when $j \sim \kappa^{n-1}$, which shows that the size of the stable region is optimal. Moreover, by \Cref{rem:Jacobi-Anger} the spherical harmonics are a singular value basis of $A_{\kappa}$. This shows that for $A_{\kappa}$ the stable features are precisely the first $\sim \kappa^{n-1}$ Fourier modes. We also remark that if one removes the normalizing constant $\kappa^{\frac{n-1}{2}}$ from \eqref{eq:linear-injective-compact-map1}, then the singular values in the stable region are $\sim \kappa^{-\frac{n-1}{2}}$. For $\kappa$ large this smallness might affect the quality of reconstructions.

One could also ask for more precise estimates in the ``plunge region'' $j \sim \kappa^{n-1}$ (as in \cite{Bonami}), or for a complementary lower bound for the singular values when $j \gtrsim \kappa^{n-1}$. The latter would correspond to a quantitative unique continuation result for solutions of $(\Delta+\kappa^{2})u=0$. Such results exist, see e.g.\ \cite{ GarciaFerreroRulandZaton,John60UCP}, but we do not consider this point further.  

From \Cref{thm:1} we see that the size of the stable region (i.e.\ the number of features that can be stably recovered) increases as $\kappa$ increases, but the recovery of high frequencies will always be unstable. Consequently, by refining the results in \cite{KRS21InstabilityMechanism}, we are able to derive the following theorem concerning the optimality of increasing stability/resolution of the inverse problem of recovering $f$ from the knowledge of $A_{\kappa}f$, with respect to $\kappa$. 

\begin{theorem}\label{thm:1-instability}
Suppose that all assumptions in \Cref{thm:1} hold. If there exists a non-decreasing function $t\in\mR_{+} \mapsto \omega(t) \in \mR_{+}$ such that 
\begin{equation*}
\norm{f}_{L^{2}(\mS^{n-1})} \le \omega\left(\norm{A_{\kappa}f}_{L^{2}(B_{1})}\right) \quad \text{whenever $\norm{f}_{H^{1}(\mS^{n-1})} \le 1$,}
\end{equation*}
then 
\begin{equation*}
\omega(t) \gtrsim \max \left\{ t , \kappa^{-1}(1 + \log(1/t))^{-1} \right\} \quad \text{for all $0 < t \lesssim 1$,} 
\end{equation*}
where the implied constants are independent of $\kappa$ and $t$. 
\end{theorem}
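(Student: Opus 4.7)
I would establish the two lower bounds separately. The estimate $\omega(t) \gtrsim t$ is essentially a consequence of boundedness of $A_{\kappa}$. Let $f_{0}$ be a normalized constant function on $\mS^{n-1}$, so $\norm{f_{0}}_{H^{1}(\mS^{n-1})} \sim \norm{f_{0}}_{L^{2}(\mS^{n-1})} \sim 1$; by \eqref{eq:stable-region-Herglotz} applied to the degree-$0$ spherical harmonic one also has $\norm{A_{\kappa}f_{0}}_{L^{2}(B_{1})} \sim 1$ uniformly in $\kappa$. Applying the hypothesis to $sf_{0}$ for $0 < s \leq 1$ and setting $s\norm{A_{\kappa}f_{0}}_{L^{2}(B_{1})} = t$ gives $\omega(t) \gtrsim t$ for $0 < t \lesssim 1$, with constants independent of $\kappa$.

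The logarithmic bound $\omega(t) \gtrsim \kappa^{-1}(1+\log(1/t))^{-1}$ encodes the instability coming from \eqref{eq:unstable-region-Herglotz}. Given an integer $N \gtrsim \kappa^{n-1}$, let $V_{N}$ be the span of the first $N$ right singular vectors of $A_{\kappa}$, and let $W_{L} \subset L^{2}(\mS^{n-1})$ denote the space of spherical harmonics of degree $\leq L$. Using $\dim W_{L} \sim L^{n-1}$, I choose $L \sim N^{1/(n-1)}$ so that $\dim W_{L} > N$, and then dimension counting (equivalently, the Courant min-max principle) provides $f \in W_{L} \cap V_{N}^{\perp}$, which I normalize to have $\norm{f}_{H^{1}(\mS^{n-1})} = 1$. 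Since $f$ is a combination of spherical harmonics of degree at most $L$, the Laplace-Beltrami eigenvalues $\ell(\ell+n-2) \lesssim L^{2}$ yield $\norm{f}_{H^{1}}^{2} \lesssim L^{2} \norm{f}_{L^{2}}^{2}$, hence $\norm{f}_{L^{2}} \gtrsim L^{-1}$. On the other hand, $f \in V_{N}^{\perp}$ combined with the singular value decomposition gives $\norm{A_{\kappa}f}_{L^{2}(B_{1})} \leq \sigma_{N+1}(A_{\kappa}) \norm{f}_{L^{2}} \leq \sigma_{N+1}(A_{\kappa})$.

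Plugging these into the hypothesis of the theorem and using monotonicity of $\omega$ together with \eqref{eq:unstable-region-Herglotz} (applicable since $N \gtrsim \kappa^{n-1}$ and $N^{1/(n-1)} \sim L$), I obtain
\begin{equation*}
L^{-1} \lesssim \norm{f}_{L^{2}(\mS^{n-1})} \leq \omega\bigl(\sigma_{N+1}(A_{\kappa})\bigr) \leq \omega\bigl(C e^{-cL/\kappa}\bigr).
\end{equation*}
For $0 < t \lesssim 1$ with $\log(1/t) \gtrsim 1$, I choose the integer $L = \lceil \kappa \log(C/t)/c \rceil$, so that $Ce^{-cL/\kappa} \leq t$ and the corresponding $N \sim L^{n-1} \gtrsim \kappa^{n-1}$ lies in the unstable region. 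Monotonicity then gives $\omega(t) \gtrsim L^{-1} \sim \kappa^{-1}(\log(1/t))^{-1}$. In the complementary regime $\log(1/t) \lesssim 1$, i.e.\ $t \sim 1$, the bound from the previous paragraph already supplies $\omega(t) \gtrsim t \gtrsim \kappa^{-1}$ since $\kappa \geq 1$. Merging the two regimes produces $\omega(t) \gtrsim \max\{t, \kappa^{-1}(1+\log(1/t))^{-1}\}$ uniformly for $0 < t \lesssim 1$.

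The main obstacle is essentially bookkeeping: coordinating the integers $N$ and $L$ so that $N$ stays in the range of validity of \eqref{eq:unstable-region-Herglotz} while the chosen $L$ matches the target scale $\kappa\log(1/t)$, and joining the two regimes so that the factor $1+$ in $\kappa^{-1}(1+\log(1/t))^{-1}$ appears uniformly. The overall scheme is an instance of the general instability framework of \cite{KRS21InstabilityMechanism}, now driven by the explicit singular value asymptotics of $A_{\kappa}$ supplied by \Cref{thm:1}.
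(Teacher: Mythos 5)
Your argument is correct and is in essence the paper's own proof: the paper packages the same mechanism --- dimension counting to produce a low-degree combination of spherical harmonics orthogonal to the leading singular vectors, the exponential decay \eqref{eq:unstable-region-Herglotz} in the unstable region, and the a priori $H^{1}$ bound to control $\norm{f}_{L^{2}(\mS^{n-1})}$ from below --- into the abstract \Cref{lem:refine-KRS} (a refinement of \cite{KRS21InstabilityMechanism}), applied with the eigenfunction basis of $-\Delta_{\mS^{n-1}}$, $\gamma_{0}=\frac{1}{n-1}$, $h_{1}\sim h_{2}\sim 1$, $\mu=c\kappa^{-1}$ and $\beta=\frac{1}{n-1}$, so your concrete choices of $V_{N}$, $W_{L}$ and $L\sim\kappa\log(1/t)$ reproduce exactly the paper's choice of $N_{\gamma_{0}}(\epsilon)$ and of $\epsilon$ in Case 2. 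One small point: for $\omega(t)\gtrsim t$ you do not actually need the uniform lower bound $\norm{A_{\kappa}f_{0}}_{L^{2}(B_{1})}\gtrsim 1$ (which \eqref{eq:stable-region-Herglotz} does not literally give for the specific constant function, being a statement about singular values; it does follow from \Cref{lem:quantative-inversion} for $\kappa$ large) --- it suffices to take $s=t/C(n)$ with $C(n)$ the Agmon--H\"ormander constant in \eqref{eq:normalized-AG-inequality} and use the monotonicity of $\omega$, exactly as in Case 1 of \Cref{lem:refine-KRS}.
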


\addtocontents{toc}{\SkipTocEntry}
\subsection{Linearized inverse scattering}

We move on to the second inverse problem studied in this article. The following facts may be found e.g.\ in \cite{Mel95GeometricScattering,PaivarintaSaloUhlmann2010}. Let $\kappa>0$ be a fixed frequency and let $q\in C_{c}^{\infty}(\mR^{n})$ be an unknown scattering potential. We suppose that we probe the medium by sending an incoming Herglotz wave $u^{\rm inc}=P_{\kappa}f$ where $f\in L^{2}(\mS^{n-1})$. This induces a unique total wave $u^{\rm tot}=u^{\rm inc}+u^{\rm sc}$ solving 
\begin{equation*}
(\Delta+\kappa^{2}+q)u^{\rm tot}=0 \quad \text{in $\mR^{n}$}
\end{equation*}
where $u^{\rm sc}$ satisfies the outgoing Sommerfeld radiation condition 
\begin{equation*}
\lim_{r\rightarrow\infty} r^{\frac{n-1}{2}} (\partial_{r}u^{\rm sc}-\bfi\kappa u^{\rm sc})(r\theta)=0, \quad \text{uniformly for $\theta\in\mS^{n-1}$.}
\end{equation*}
We write $P_{\kappa}(q)f := u^{\rm tot}$ and call $P_{\kappa}(q)$ the \emph{Poisson operator} that maps a boundary data $f$ on the sphere at infinity to the corresponding solution of $(\Delta+\kappa^{2}+q)u=0$ in $\mR^{n}$. 

The scattering measurements at frequency $\kappa$ are encoded by the \emph{scattering matrix} $S_{\kappa}(q)$. This is an operator on $L^{2}(\mS^{n-1})$ that may be defined via 
\begin{equation*}
(S_{\kappa}(q)f)(\theta) = \lim_{r\rightarrow\infty} r^{\frac{n-1}{2}} e^{-\bfi\kappa r} (\partial_{r} u^{\rm tot} + \bfi\kappa u^{\rm tot})(r\theta).
\end{equation*}
After multiplying $S_{\kappa}(q)$ by a suitable normalizing constant, it becomes a unitary operator on $L^{2}(\mS^{n-1})$ that satisfies the integral identity (``boundary pairing'')
\begin{equation}
\left(((S_{\kappa}(q)-S_{\kappa}(0))f,g\right)_{L^{2}(\mS^{n-1})} = c_{n,\kappa} \left((q-0)P_{\kappa}(q)f,P_{\kappa}(0)g\right)_{L^{2}(\mR^{n})}. \label{eq:scattering-operator-def}
\end{equation}
This is an analogue of the Alessandrini identity appearing in inverse boundary value problems. Thus $S_{\kappa}(q)$ can be thought of as an analogue for $\mR^{n}$ of the Dirichlet-to-Neumann map (or more precisely impedance-to-impedance map) for a bounded domain.  Knowing $S_{\kappa}(q)$ is equivalent to knowing the far-field operator, or scattering amplitude, for the equation $(\Delta+\kappa^{2}+q)u=0$ in $\mR^{n}$. 

The linearization of $S_{\kappa}$ at $q=0$ is readily obtained from the identity \eqref{eq:scattering-operator-def}. This linearization, denoted by $F_{\kappa}$ (after multiplying by a suitable constant), is given by the formula 
\begin{equation} \label{fkappa_bilinear}
(F_{\kappa}(h)f,g)_{L^{2}(\mS^{n-1})} = \kappa^{\frac{n-1}{2}} (h P_{\kappa}f,P_{\kappa}g)_{L^{2}(B_{1})}.
\end{equation}
We will show that if $h$ is supported in $\ol{B}_1$, then $F_{\kappa}(h)$ is a Hilbert-Schmidt operator on $L^2(\mS^{n-1})$, and in the case $n=3$ one further has 
\begin{equation} \label{fkappa_norm_first}
\norm{F_{\kappa}(h)}_{\rm HS}^2 \sim \int_{B_{2\kappa}} \frac{\abs{\hat{h}(\xi)}^2}{\abs{\xi}} \,\rmd\xi.
\end{equation}
This suggests that it is natural to consider $F_{\kappa}$ as a compact operator  
\begin{equation}
F_{\kappa} : H_{\overline{B}_{1}}^{-1/2} \rightarrow {\rm HS}\,(L^{2}(\mS^{n-1})), \label{eq:operator-Fk}
\end{equation}
where $H^s_K = \{ f \in H^s(\mR^n) \,:\, \mathrm{supp}(f) \subset K \}$. For simplicity we use $\|\cdot\|_{\rm HS}$ to denote the Hilbert-Schmidt norm on $L^2(\mathbb{S}^{n-1})$. The injectivity of $F_{\kappa}$ follows from \eqref{fkappa_norm_first} and the analyticity of $\hat{h}$. Alternatively, one can prove injectivity of $F_{\kappa}$ by \eqref{fkappa_bilinear} and the Runge approximation fact that $P_{\kappa} f|_{B_1}$ can be used to approximate complex geometrical optics solutions $e^{\rho\cdot x}|_{B_1}$ for suitable $\rho \in \mC^n$ with $\rho\cdot\rho=-\kappa^{2}$. Hence \eqref{eq:operator-Fk} has a sequence of positive singular values $\sigma_{j} = \sigma_{j}(F_{\kappa})$ with $\sigma_{1}\ge \sigma_{2} \ge \cdots \rightarrow 0$. 

As before, we are interested in the behavior of the singular values $\sigma_{j}$ of \eqref{eq:operator-Fk} with explicit dependence on the key parameter $\kappa$. We will show that the singular values are roughly constant in the stable region $j \lesssim \kappa^{n}$, whereas they decay exponentially in the unstable region where $j \gtrsim \kappa^{n}$.  Again, related numerical experiments are given in \Cref{sec:numerical}.

\begin{theorem}[See also \Cref{thm_scattering_refined}] \label{thm:2}
Let $n\ge 2$ and $\kappa \ge 1$. The singular values $\sigma_{j}(F_{\kappa})$ of \eqref{eq:operator-Fk} satisfy 
\begin{subequations}
\begin{align}
& \sigma_{j}(F_{\kappa}) \sim 1, && \text{for all} \quad j\lesssim \kappa^{n}, \label{eq:stable-region-farfield} \\
& \sigma_{j}(F_{\kappa}) \lesssim \kappa^{\alpha(n)} j^{\frac{1}{2n}} \exp\left(-c\kappa^{-\frac{1}{2}}j^{\frac{1}{2n}}\right), && \text{for all} \quad j \gtrsim \kappa^{n}, \label{eq:unstable-region-farfield}
\end{align}
\end{subequations}
where the constant $c>0$ and the implied constants are independent of $\kappa$ and $j$, $\alpha(n)=0$ for $n \geq 3$, and $\alpha(2) = \frac{1}{2}$. 
\end{theorem}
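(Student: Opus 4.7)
My plan is to reduce the singular value problem to that of a bandlimited Fourier operator via an explicit Schwartz-kernel computation, after which the Courant min--max principle supplies matching lower and upper bounds.

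\emph{Schwartz kernel and coarea reduction.} Substituting $P_\kappa f(x)=\int_{\mS^{n-1}}e^{\bfi\kappa\omega\cdot x}f(\omega)\,\rmd S(\omega)$ into \eqref{fkappa_bilinear} shows that $F_\kappa(h)$ has integral kernel $K_h(\theta,\omega)=\kappa^{(n-1)/2}\hat h(\kappa(\theta-\omega))$ on $L^2(\mS^{n-1})$. The map $(\theta,\omega)\mapsto\theta-\omega$ sends $\mS^{n-1}\times\mS^{n-1}$ onto $B_2$ with round-sphere fibers $S^{n-2}(\sqrt{1-|v|^2/4})$, and the coarea formula gives
\[
\|F_\kappa(h)\|_{\rm HS}^{2}=|\mS^{n-2}|\int_{B_{2\kappa}}\frac{|\hat h(\xi)|^{2}}{|\xi|}\Big(1-\tfrac{|\xi|^{2}}{4\kappa^{2}}\Big)^{\!(n-3)/2}\rmd\xi,
\]
generalizing \eqref{fkappa_norm_first} to every $n\geq 2$ (for $n=2$ the factor produces an integrable boundary singularity). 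Together with the Plancherel identity $\|h\|_{H^{-1/2}}^{2}\sim\int|\hat h(\xi)|^{2}\langle\xi\rangle^{-1}\rmd\xi$, this identifies the singular values of $F_\kappa$ with those of the bandlimited operator $G_\kappa\colon H^{-1/2}_{\overline{B}_{1}}\to L^{2}(B_{2\kappa};w\,\rmd\xi)$, $G_\kappa h:=\hat h|_{B_{2\kappa}}$, where the weight $w(\xi)$ is equivalent to $\langle\xi\rangle^{-1}$ on the interior of $B_{2\kappa}$. In particular, the boundedness of $F_\kappa$ is immediate.

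\emph{Lower bound in the stable region.} For \eqref{eq:stable-region-farfield} I would apply Courant min--max from below with modulated bumps. Fix $\phi\in C_{c}^{\infty}(B_{1})$ with $\hat\phi(0)\ne0$, and for a unit-spaced grid $\{\xi_{\ell}\}\subset B_{(2-\epsilon)\kappa}$ (of cardinality $M\sim\kappa^{n}$) set $h_{\ell}(x):=\phi(x)e^{\bfi\xi_{\ell}\cdot x}$, so that $\hat h_{\ell}=\hat\phi(\cdot-\xi_{\ell})$ is a Schwartz bump sitting at $\xi_{\ell}\in B_{2\kappa}$. Since $w(\xi)\sim\langle\xi\rangle^{-1}$ on $B_{(2-\epsilon)\kappa}$, both the Gram matrix of $\{h_{\ell}\}$ in $H^{-1/2}$ and the Gram matrix $\{(G_\kappa h_{\ell},G_\kappa h_{\ell'})_{w}\}$ are diagonal-dominant with comparable diagonals and rapidly decaying off-diagonals (by the Schwartz decay of $\hat\phi$). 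A Schur comparison of the two Grams yields $\|G_\kappa h\|_{w}\gtrsim\|h\|_{H^{-1/2}}$ for every $h\in V_{M}:=\mathrm{span}\{h_{\ell}\}$; combined with the trivial upper bound $\|F_\kappa\|\lesssim 1$, this proves \eqref{eq:stable-region-farfield}. The case $n=2$ needs a small modification to accommodate the integrable boundary singularity of $w$ at $|\xi|=2\kappa$.

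\emph{Upper bound in the unstable region.} For \eqref{eq:unstable-region-farfield} I again use Courant min--max, now building a rank-$j$ approximation to $G_\kappa$. Because $\mathrm{supp}(h)\subset\overline{B}_{1}$, Paley--Wiener gives $|D^{\alpha}\hat h(0)|\lesssim|\alpha|^{1/2}2^{|\alpha|}\|h\|_{H^{-1/2}}$, so the Taylor polynomial $T_{N}\hat h(\xi)=\sum_{|\alpha|\leq N}(D^{\alpha}\hat h(0)/\alpha!)\xi^{\alpha}$ defines a rank-$\sim N^{n}$ linear map whose error on $B_{2\kappa}$ is controlled by $\sum_{k>N}k^{1/2}(Cn\kappa)^{k}/k!$. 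Stirling shows this tail is superexponentially small once $N\gtrsim\kappa$; converting to the weighted $L^{2}(B_{2\kappa})$ norm (the weight is bounded for $n\geq 3$ and carries an extra $\kappa^{1/2}$ loss for $n=2$, explaining $\alpha(n)$) and optimizing $N$ in terms of $j$ yields a bound of the form of \eqref{eq:unstable-region-farfield}. The main obstacle is tracking the polynomial prefactors precisely enough to extract the stated exponent $\kappa^{-1/2}j^{1/(2n)}$; I expect the authors' route to this goes through the singular-value estimates for $A_\kappa$ of \Cref{thm:1} using the structural factorization $F_\kappa(h)=\kappa^{-(n-1)/2}A_\kappa^{*}M_{h}A_\kappa$ (immediate from \eqref{fkappa_bilinear}), which should produce structurally sharper finite-rank approximations than a direct Paley--Wiener argument.
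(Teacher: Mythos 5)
Your opening step --- identifying the Schwartz kernel $K_{\kappa}[h](\theta,\omega)=\kappa^{\frac{n-1}{2}}\hat h(\kappa(\omega-\theta))$ and using the coarea formula to write $\norm{F_{\kappa}(h)}_{\rm HS}^{2}$ as a weighted $L^2$ integral of $\hat h$ over $B_{2\kappa}$ --- is precisely what the paper does in \Cref{lem_sk} and \Cref{lem:coarea-general}, so that part agrees (your weight $(1-|\xi|^{2}/4\kappa^{2})^{(n-3)/2}$ is the paper's $(4-\kappa^{-2}|\xi|^{2})^{(n-3)/2}$ up to a harmless constant). After that your two bounds take a genuinely different route. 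For the stable region, the paper does not construct modulated bumps at all: it applies the Courant max--min principle to the first $j$ singular vectors of the embedding $i_{s}\colon H^{-s}_{\ol{B}_{1}}\to H^{-1/2}_{\ol{B}_{1}}$, using entropy-number bounds from \cite{KRS21InstabilityMechanism} to get $\sigma_{j}(i_{s})\gtrsim j^{-(1/2-s)/n}$, then absorbs the outer tail via the simple pointwise inequality $\norm{F_{\kappa}(h)}_{\rm HS}^{2}\geq c_{n}\norm{h}_{H^{-1/2}}^{2}-C_{n}\kappa^{2s-1}\norm{h}_{H^{-s}}^{2}$. Your bump construction can in principle give the same conclusion, but the ``Schur comparison'' of the two Gram matrices is delicate because the diagonal entries vary from $\sim 1$ (bumps near $\xi=0$) to $\sim\kappa^{-1}$ (bumps near $|\xi|\sim\kappa$); one cannot just say ``diagonal-dominant with comparable diagonals,'' one must verify that the off-diagonal coupling is small relative to the \emph{smallest} diagonal entry, or argue directly via the Fourier-side integral rather than through Gram matrices. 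This is fixable but it is not a one-line Schur test.

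For the unstable region the paper again avoids a bespoke construction. It proves a smoothing estimate $\norm{F_{\kappa}(h)}_{\rm HS}\leq C(Cm\kappa)^{2m}\kappa^{\alpha(n)}\norm{h}_{H^{-2m}}$ (\Cref{lem_fk_smoothing}, using the H\"ormander cutoff family $\chi_{m}$ with $|\p^{\alpha}\chi_{m}|\leq C(Cm)^{|\alpha|}$), combines it with singular value bounds for $i\colon H^{-s}_{\ol{B}_{1}}\to H^{-2m}_{\ol{B}_{1}}$ (\Cref{lemma_singular_embedding}), and optimizes over $m$. Your Taylor polynomial construction is a reasonable alternative, and I believe a careful version of it would actually give a stronger decay rate than the paper's (the paper explicitly says in \Cref{rem:thm2} that \eqref{eq:unstable-region-farfield} is ``probably not optimal''). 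However, you have only sketched it: the Paley--Wiener bound $|D^{\alpha}\hat h(0)|\lesssim|\alpha|^{1/2}2^{|\alpha|}\norm{h}_{H^{-1/2}}$ needs a proof (it follows from $|D^{\alpha}\hat h(0)|\leq\norm{x^{\alpha}\chi}_{H^{1/2}}\norm{h}_{H^{-1/2}}$ with a fixed cutoff $\chi$, via interpolation), and you must track the multinomial count $\#\{|\alpha|=k\}$ and the $\int_{B_{2\kappa}}w$ factor before extracting a singular value bound via the best rank-$r$ approximation characterization. None of these is a fundamental obstacle, but the claim ``optimizing $N$ in terms of $j$ yields a bound of the form of \eqref{eq:unstable-region-farfield}'' is not established as written. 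Finally, your closing guess that the paper's route goes through the factorization $F_{\kappa}(h)=\kappa^{-(n-1)/2}A_{\kappa}^{*}M_{h}A_{\kappa}$ and the singular value bounds for $A_{\kappa}$ from \Cref{thm:1} is incorrect: the proof of \Cref{thm_scattering_refined} is entirely self-contained and does not invoke \Cref{thm:1} at all.
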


\begin{remark}\label{rem:thm2}
A similar argument, given in \Cref{thm_scattering_refined}, shows that the singular values $\sigma_{j}(F_{\kappa} : L^{2}_{\ol{B}_{1}} \rightarrow {\rm HS})$ satisfy 
\begin{subequations}
\begin{align}
& j^{-\frac{1}{2n}} \lesssim \sigma_{j}(F_{\kappa} : L^{2}_{\ol{B}_{1}} \rightarrow {\rm HS}) \lesssim 1, && \text{for all} \quad j \lesssim \kappa^{n}, \label{eq:stable-region-farfield-L2}\\
& \sigma_{j}(F_{\kappa} : L^{2}_{\ol{B}_{1}} \rightarrow {\rm HS}) \lesssim \kappa^{\alpha(n)} \exp\left(-c\kappa^{-\frac{1}{2}}j^{\frac{1}{2n}}\right), && \text{for all} \quad j \gtrsim \kappa^{n}.\label{eq:unstable-region-farfield-L2} 
\end{align}    
\end{subequations}
The decay estimate \eqref{eq:unstable-region-farfield} is probably not optimal but it is strong enough to show that for $j \sim \kappa^n$ one has $\exp\left(-c\kappa^{-\frac{1}{2}}j^{\frac{1}{2n}}\right) \sim 1$, showing that the size of the stable region is (nearly) optimal. 
\end{remark}

In analogy with \Cref{thm:1-instability}, we are able to derive the following theorem concerning the optimality of increasing stability of the inverse problem of recovering $h$ from the knowledge of $F_{\kappa}(h)$.

\begin{theorem}\label{thm:2-instability}
Suppose that all assumptions in \Cref{thm:2} hold. If there exists a non-decreasing function $t:\mR_{+}\rightarrow\omega(t)\in\mR_{+}$ such that 
\begin{equation*}
\norm{h}_{L^{2}_{\ol{B}_{1}}} \le \omega\left(\norm{F_{\kappa}(h)}_{\rm HS}\right) \quad \text{for all $h\in H^{1}_{\ol{B}_{1}}$ with $\norm{h}_{H^1} \le 1$,}
\end{equation*}
then 
\begin{equation*}
\omega(t) \gtrsim \max \left\{ t , \kappa^{-1}\left(\log\kappa + \log(1/t)\right)^{-2} \right\} \quad \text{for all $0<t\lesssim 1$,} 
\end{equation*}
where the implied constants are independent of $\kappa$ and $t$.  
\end{theorem}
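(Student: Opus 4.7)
The plan is to establish the two lower bounds $\omega(t) \gtrsim t$ and $\omega(t) \gtrsim \kappa^{-1}(\log\kappa + \log(1/t))^{-2}$ separately; the conclusion then follows by taking the maximum. The linear bound is routine: fix a nonzero $h_0 \in C_c^\infty(B_1)$ with $\|h_0\|_{H^1} \le 1$ and apply the hypothesis to $h = s h_0$ for $s \in (0,1]$, using that $F_\kappa : L^2_{\ol B_1} \to \mathrm{HS}$ is bounded uniformly in $\kappa$ (which follows from the singular value bound $\sigma_1(F_\kappa) \sim 1$ in \Cref{rem:thm2}).

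For the logarithmic bound I would combine the $L^2 \to \mathrm{HS}$ singular value decay of \Cref{rem:thm2} with a dimension-counting argument that simultaneously controls the $H^1$ norm. Let $\{e_j\}_{j \ge 1}$ be an orthonormal basis of $L^2_{\ol B_1}$ of singular vectors for $F_\kappa$, and for each $N \gtrsim \kappa^n$ let $W_N = \overline{\mathrm{span}}\{e_j : j \ge N\}$, a closed subspace of codimension $N-1$. Then every $h \in W_N$ satisfies
\[
\|F_\kappa h\|_{\mathrm{HS}} \le \sigma_N \|h\|_{L^2} \lesssim \kappa^{\alpha(n)} \exp\bigl(-c \kappa^{-1/2} N^{1/(2n)}\bigr) \|h\|_{L^2}.
\]
In parallel, let $V_M \subset H^1_0(B_1)$ denote the linear span of Dirichlet Laplacian eigenfunctions on $B_1$ with eigenvalue at most $M$; elements of $V_M$ extended by zero lie in $H^1_{\ol B_1}$ with $\|h\|_{H^1} \le (1+M)^{1/2}\|h\|_{L^2}$, while Weyl's law gives $\dim V_M \gtrsim M^{n/2}$.

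Choosing $M \sim N^{2/n}$ so that $\dim V_M \ge N$ forces $V_M \cap W_N \ne \{0\}$; taking any $h_0$ in this intersection with $\|h_0\|_{L^2} = 1$ and rescaling to $h_* = h_0/\|h_0\|_{H^1}$ produces an admissible test function with $\|h_*\|_{H^1} \le 1$, $\|h_*\|_{L^2} \gtrsim N^{-1/n}$ and $\|F_\kappa h_*\|_{\mathrm{HS}} \le \epsilon_N := \kappa^{\alpha(n)} \exp\bigl(-c \kappa^{-1/2} N^{1/(2n)}\bigr)$. The hypothesis then reads $N^{-1/n} \lesssim \omega(\epsilon_N)$, and for any small $t > 0$, picking the smallest $N$ with $\epsilon_N \le t$ and using monotonicity of $\omega$ gives $\omega(t) \ge \omega(\epsilon_N) \gtrsim N^{-1/n}$. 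Inverting the exponential relation $\epsilon_N \sim t$ yields $N^{1/(2n)} \sim \kappa^{1/2}(\log\kappa + \log(1/t))$, and hence $N^{-1/n} \gtrsim \kappa^{-1}(\log\kappa + \log(1/t))^{-2}$. The main step I expect to require care is the dimension-counting intersection $V_M \cap W_N \ne \{0\}$ together with the precise tracking of how the Weyl exponent $n/2$ for $\dim V_M$ combines with the exponent $1/(2n)$ from the singular value decay to produce exactly $(\log\kappa + \log(1/t))^{-2}$; this mirrors the strategy used for \Cref{thm:1-instability}, with the $L^2$-based singular value estimate of \Cref{rem:thm2} in place of the bound on $\sigma_j(A_\kappa)$.
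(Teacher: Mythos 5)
Your proposal is correct and takes essentially the same route as the paper: the paper applies its abstract instability result (\Cref{lem:refine-KRS}) with $X=L^{2}_{\ol{B}_{1}}$, the Dirichlet eigenbasis of $-\Delta$ in $B_{1}$, $\gamma_{0}=1/n$, and the singular value bounds of \Cref{rem:thm2} (i.e.\ $h_{1}\sim 1$, $h_{2}\sim\kappa^{\alpha(n)}$, $\mu=c\kappa^{-1/2}$, $\beta=\tfrac{1}{2n}$), which amounts precisely to your dimension count between the span of low Dirichlet eigenfunctions (Weyl law, giving the $H^{1}$ constraint) and the tail singular subspace of $F_{\kappa}:L^{2}_{\ol{B}_{1}}\to{\rm HS}$, followed by inverting the exponential in $\epsilon_{N}\sim t$. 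The only difference is packaging: you carry out by hand the counting step that the paper delegates to \Cref{lem:refine-KRS} and the construction from \cite{KRS21InstabilityMechanism}.
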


\addtocontents{toc}{\SkipTocEntry}
\subsection{Methods}

We now explain the methods for proving \Cref{thm:1,thm:2}. Since the Herglotz operator $P_{\kappa}$ satisfies $(P_{\kappa}f)(x)=(f\,\rmd S)\ehat(-\kappa x)$, we have 
\begin{equation}
\norm{A_{\kappa}f}_{L^{2}(B_{1})}^{2} = \frac{1}{\kappa} \int_{B_{\kappa}} \abs{(f\,\rmd S)\ehat(\xi)}^{2} \, \rmd\xi. \label{eq:Agmon-Hormander}
\end{equation}
The right hand side is precisely the expression that appears in Agmon-H\"{o}rmander estimates for Fourier transforms of $L^{2}$-densities. In particular, by \cite[Theorem~3.1]{AgmonHormander}, we have 
\begin{equation}
\frac{1}{R} \int_{B_{R}} \abs{(f\,\rmd S)\ehat(\xi)}^{2} \, \rmd \xi \rightarrow c_{n} \norm{f}_{L^{2}(\mS^{n-1})}^{2} \quad \text{as $R\rightarrow\infty$.} \label{eq:qualitative-inversion}
\end{equation}
The lower bound for singular values of $A_{\kappa}$ in the stable region will be obtained from the Courant max-min principle together with a quantitative version of \eqref{eq:qualitative-inversion} proved in \Cref{lem:quantative-inversion}. The exponential decay rate for singular values in the unstable region is in turn proved by the Courant min-max principle and precise smoothing estimates for $A_{\kappa}$ as an operator from $H^{-s}(\mS^{n-1})$ to $L^{2}(B_{1})$ where $s>0$ is large. 

\begin{remark}\label{rem:Jacobi-Anger}
The singular values $\sigma_{j}(A_{\kappa})$ can be expressed explicitly in terms of Bessel functions. By \eqref{eq:Agmon-Hormander} and the plane wave (or Jacobi-Anger) expansion method, see e.g. \cite[Section~2]{NegroOliveira2023}, one has 
\begin{equation}
\norm{A_{\kappa}f}_{L^{2}(B_{1})}^{2} = (2\pi)^n \sum_{\ell=0}^{\infty} \sum_{m=1}^{N_{\ell}} \Lambda_{\ell}(\kappa) \abs{(f,Y_{\ell,m})}^{2} \label{eq:NegroOliveira2023}
\end{equation}
where $(Y_{\ell,m})$ is an orthonormal basis of $L^{2}(\mS^{n-1})$ consisting of spherical harmonics, $N_{\ell}$ is the dimension of spherical harmonics of degree $\ell$, and the coefficients $\Lambda_{\ell}(\kappa)$ are given by 
\begin{equation*}
\Lambda_{\ell}(\kappa) = \frac{1}{\kappa} \int_{0}^{\kappa} r J_{\ell+\nu}(r)^{2} \, \rmd r,
\end{equation*}
where $J_{\alpha}(z)$ is the Bessel function and $\nu = \frac{n-2}{2}$. The same expressions appear in \cite[(3.2)--(3.3)]{GriesmaierSylvester2017} and \cite[(3.3)--(3.4)]{GriesmaierSylvester2017_3D} for $n=2$ and $n=3$, respectively. The formula \eqref{eq:NegroOliveira2023} implies that $A_{\kappa}^{*}A_{\kappa}$ becomes diagonal in the $(Y_{\ell,m})$ basis, which is therefore a singular value basis for $A_{\kappa}$. The singular values $\sigma_{j}$ are given by the numbers $(c_{n}\Lambda_{\ell}(\kappa))^{1/2}$, when counted with correct multiplicity and arranged in nonincreasing order. As mentioned before, direct estimates for $\Lambda_{\ell}(\kappa)$ are given in \cite{GriesmaierSylvester2017, GriesmaierSylvester2017_3D}, which yield similar singular value estimates as those in \Cref{thm:1} at least when $n=2,3$.
\end{remark}

The argument for $F_{\kappa}$ follows a similar structure. First we observe that the $L^{2}$ norm of the Hilbert-Schmidt operator $F_{\kappa}(h)$ is just the $L^{2}$ norm of its Schwartz kernel. By computing the Schwartz kernel explicitly, we have 
\begin{equation*}
\norm{F_{\kappa}(h)}_{\rm HS}^{2} = \int_{\mS^{n-1}} \int_{\mS^{n-1}} \abs{\hat{h}(\kappa(\omega-\theta))}^{2} \, \rmd S(\omega) \, \rmd S(\theta). 
\end{equation*}
We then give an argument involving the coarea formula to express the double integral as an integral over $B_{2}$. The case $n=3$ is particularly simple, and one obtains 
\begin{equation}
\norm{F_{\kappa}(h)}_{\rm HS}^{2} = c_{n} \int_{B_{2\kappa}} \abs{\hat{h}(\xi)}^{2} \abs{\xi}^{-1} \, \rmd \xi. \label{eq:qualitative-inversion-farfield}
\end{equation}
This expression is somewhat similar to the Agmon-H\"{o}rmander type expression \eqref{eq:qualitative-inversion}, and it also explains why the $H^{-1/2}$ norm is natural in this setting. We can then apply the Courant max-min principle and a quantitative analysis of \eqref{eq:qualitative-inversion-farfield} to estimate the singular values in the stable region. Again the exponential decay estimates in the unstable region follow from an analysis of the smoothing properties of $F_{\kappa}$. We remark that the Hilbert-Schmidt norm was also used in \cite{garde2024linearised} for proving stability in a related problem.

\addtocontents{toc}{\SkipTocEntry}
\subsection{Organization} 

We first prove \Cref{thm:1} in \Cref{sec:singular-Herglotz}, and then prove \Cref{thm:2} in \Cref{sec:Singular-scattering}. In \Cref{sec:instability} we prove \Cref{thm:1-instability,thm:2-instability}. Finally, we give some numerical evidence for the singular value estimates in \Cref{thm:1} and \Cref{thm:2} in \Cref{sec:numerical}.

\addtocontents{toc}{\SkipTocEntry}
\subsection*{Acknowledgments}

\noindent 
The second author would like to thank Plamen Stefanov for suggesting the term \emph{increasing resolution}. The first author was partly supported by the NCCU Office of research and development and the National Science and Technology Council of Taiwan (NSTC~112-2115-M-004-004-MY3). The second author was partly supported by the Academy of Finland (Centre of Excellence in Inverse Modelling and Imaging and FAME Flagship, 312121 and 359208) and by the European Research Council under Horizon 2020 (ERC~CoG~770924). The third author was supported by Key-Area Research and Development Program of Guangdong Province (No.~2021B0101190003), NSFC (No.~11925104) and the Chinese Scholarship Council (No.~202106100094). 

\section{\label{sec:singular-Herglotz}Singular values of Herglotz operator}

Let $0=\lambda_{1} < \lambda_{2} \le \lambda_{3} \le \cdots$ be the eigenvalues of the Laplace-Beltrami operator $-\Delta_{\mS^{n-1}}$ on $\mS^{n-1}$, and let $(\phi_{\ell})_{\ell=1}^{\infty}$ be an orthonormal basis of $L^{2}(\mS^{n-1})$ consisting of eigenfunctions with $-\Delta_{\mS^{n-1}}\phi_{\ell} = \lambda_{\ell}\phi_{\ell}$. Recall that $H^{s}(\mS^{n-1})$, for each $s\in\mR$, is the Sobolev space usually equipped with the norm 
\begin{equation*}
\norm{f}_{H^{s}(\mS^{n-1})}' := \norm{(1-\Delta_{\mS^{n-1}})^{s/2}f}_{L^{2}(\mS^{n-1})} = \left( \sum_{\ell=1}^{\infty} (1+\lambda_{\ell})^{s} \abs{(f,\phi_{\ell})}^{2} \right)^{1/2}.
\end{equation*}
By using Weyl asymptotics (see e.g.\ \cite[Theorem~8.3.1]{Tay11PDEvol2}), there exists a constant $C_{n}>0$ such that 
\begin{equation}
C_{n}^{-1}\ell^{\frac{2}{n-1}} \le 1 + \lambda_{\ell} \le C_{n} \ell^{\frac{2}{n-1}} \quad \text{for all $\ell \geq 1$.} \label{eq:Weyl}
\end{equation}
Thus we have an equivalent norm 
\begin{equation*}
\norm{f}_{H^{s}(\mS^{n-1})} := \left( \sum_{\ell=1}^{\infty} \ell^{\frac{2s}{n-1}} \abs{(f,\phi_{\ell})}^{2} \right)^{1/2},
\end{equation*}
and we will use this norm on $H^{s}(\mS^{n-1})$ hereafter. We will now prove \Cref{thm:1} by showing the estimates \eqref{eq:stable-region-Herglotz} and \eqref{eq:unstable-region-Herglotz} separately. 

\addtocontents{toc}{\SkipTocEntry}
\subsection{The stable region}

In order to study the singular values in the stable region, we prove a quantative version of the estimate in \cite[Theorem~3.1]{AgmonHormander} as follows. 

\begin{lemma}\label{lem:quantative-inversion}
Let $\phi \in C_{c}(\mR^{n})$ be a radially symmetric function, i.e.\ there exists $\varphi \in C_{c}(\mR)$ such that $\phi(x) = \varphi(\abs{x})$. Then there exist a constant $C = C(n,\phi)>0$ such that 
\begin{equation*}
\begin{aligned}
& \left| \frac{1}{R} \int_{\mR^{n}} \abs{(f\,\rmd S)\ehat(\xi)}^{2} \phi(\xi/R) \, \rmd \xi - 2(2\pi)^{n-1} \left( \int_{0}^{\infty} \varphi(r) \, \rmd r \right) \norm{f}_{L^{2}(\mS^{n-1})}^{2} \right| \\
& \quad \le \frac{C}{R} \norm{f}_{L^{2}(\mS^{n-1})} \norm{f}_{H^{1}(\mS^{n-1})}
\end{aligned}
\end{equation*}
whenever $f \in H^{1}(\mS^{n-1})$ and $R \ge 1$. 
\end{lemma}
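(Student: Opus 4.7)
The strategy is to diagonalize the quadratic form in $f$ via the spherical harmonic basis, which reduces the statement to a uniform estimate for a family of one-variable Bessel integrals.

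Expanding $\abs{(f\,\rmd S)\ehat(\xi)}^{2}$ as a double integral on $\mS^{n-1}\times\mS^{n-1}$, swapping the order of integration by Fubini (justified by the compact support of $\phi$), and applying the Jacobi--Anger plane-wave expansion as in \Cref{rem:Jacobi-Anger}, one derives
\begin{equation*}
\frac{1}{R}\int_{\mR^{n}}\abs{(f\,\rmd S)\ehat(\xi)}^{2}\phi(\xi/R)\,\rmd\xi = \sum_{\ell,m} q_\ell(R)\,\abs{(f,Y_{\ell,m})}^{2},
\end{equation*}
where $q_\ell(R) = (2\pi)^n R\int_0^\infty s\,\varphi(s)\,J_{\ell+\nu}(Rs)^2\,\rmd s$ with $\nu = (n-2)/2$. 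The identity $(2\pi)^n/\pi = 2(2\pi)^{n-1}$ together with the averaged Bessel asymptotic $\pi Rs\,J_\alpha(Rs)^2 \to 1$ (as $R\to\infty$ with $\alpha$ fixed) identifies the target constant, so the lemma reduces to the uniform-in-$\ell$ estimate $\bigl| q_\ell(R) - 2(2\pi)^{n-1}\int_0^\infty\varphi\,\rmd r\bigr| \lesssim \tfrac{1}{R}\bigl(1+\ell^{1/(n-1)}\bigr)$ for $\ell\ge 0$ and $R\ge 1$. Once this pointwise bound is available, summation and the Cauchy--Schwarz inequality $\sum_{\ell,m}\ell^{1/(n-1)}\abs{(f,Y_{\ell,m})}^{2} \le \norm{f}_{L^{2}(\mS^{n-1})}\norm{f}_{H^{1}(\mS^{n-1})}$ (valid via the Weyl-type norm equivalence fixed in the preamble) yield the stated error estimate.

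To prove the per-mode estimate I would use Lommel's integral identity
\begin{equation*}
\int_0^z tJ_\alpha(t)^2\,\rmd t = \tfrac{1}{2}\bigl[(z^2-\alpha^2)J_\alpha(z)^2 + z^2 J_\alpha'(z)^2\bigr] =: F_\alpha(z),
\end{equation*}
combined with integration by parts against $\varphi$ (after a mollification of $\phi$ if necessary, with the rate absorbed into $C(n,\phi)$), to rewrite $q_\ell(R)$ in a form where Bessel asymptotics can be applied to $F_\alpha$ rather than to $J_\alpha^2$ directly. In the oscillatory regime $\ell+\nu\lesssim R$, the classical large-argument asymptotic for $J_\alpha$ gives $F_\alpha(z) = \sqrt{z^2-\alpha^2}/\pi$ up to oscillatory terms integrable against $\varphi$ to yield an $O(1/R)$ remainder; in the decay regime $\ell+\nu\gtrsim R$, the super-exponential smallness of $J_\alpha(Rs)$ on $\supp\varphi$ makes $q_\ell(R)$ bounded while the factor $1+\ell^{1/(n-1)}\gtrsim R$ in that range absorbs the constant.

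The main obstacle is the transition regime $\ell+\nu\sim R$, where neither the oscillatory nor the exponential-decay asymptotic applies directly and $J_\alpha(z)$ is of Airy type (of amplitude $\alpha^{-1/3}$ near the turning point). I would handle this using Olver's uniform asymptotic expansion for Bessel functions in terms of Airy functions, which furnishes explicit pointwise control on $F_\alpha(z)$ valid simultaneously across all three regimes with constants depending only on $n$ and $\phi$; careful matching across these regimes is what closes the argument.
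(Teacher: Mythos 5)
Your diagonalization step is correct and the constant matches: by the Jacobi--Anger expansion as in \Cref{rem:Jacobi-Anger}, the weighted integral equals $\sum_{\ell,m} q_{\ell}(R)\,\abs{(f,Y_{\ell,m})}^{2}$ with $q_{\ell}(R)=(2\pi)^{n}R\int_{0}^{\infty}s\varphi(s)J_{\ell+\nu}(Rs)^{2}\,\rmd s$, so this is a genuinely different (Griesmaier--Sylvester style) route from the paper's. However, the uniform estimate you reduce the lemma to is false for $n\ge 3$, because you have conflated the spherical-harmonic \emph{degree} $\ell$ with the \emph{enumeration index} $j$ of the eigenbasis used in the paper's $H^{1}(\mS^{n-1})$ norm. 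A degree-$\ell$ harmonic has enumeration index $j\sim\ell^{n-1}$, so by \eqref{eq:Weyl} and Cauchy--Schwarz the per-mode weight you are allowed is $j^{1/(n-1)}\sim\ell$, i.e.\ the correct target is $\bigl|q_{\ell}(R)-2(2\pi)^{n-1}\int_{0}^{\infty}\varphi\,\rmd r\bigr|\lesssim (1+\ell)/R$, not $(1+\ell^{1/(n-1)})/R$. The latter cannot hold when $\int_{0}^{\infty}\varphi\,\rmd r\neq 0$ (the case relevant to the application): for $C_{\varphi}R\le\ell\ll R^{n-1}$ (a nonempty range when $n\ge3$) the turning point of $J_{\ell+\nu}(R\,\cdot\,)$ lies beyond $\supp\varphi$, so $q_{\ell}(R)$ is exponentially small and the left-hand side is $\sim 1$, while your right-hand side is $o(1)$; accordingly your justification for the decay regime, ``$1+\ell^{1/(n-1)}\gtrsim R$ when $\ell+\nu\gtrsim R$'', is wrong for $n\ge3$, since $\ell\gtrsim R$ only gives $\ell^{1/(n-1)}\gtrsim R^{1/(n-1)}$.

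The slip is repairable: with the weight $(1+\ell)/R$ the decay and transition regimes are absorbed exactly as you intend, and the Cauchy--Schwarz step still closes because $j^{1/(n-1)}\sim\ell$. But after this fix, the entire substance of the proof is the uniform-in-$(\ell,R)$ control of $F_{\ell+\nu}(Rs)$ across the oscillatory, Airy and exponential regimes, which you only sketch; moreover, since $\varphi$ is merely continuous, the integration-by-parts/mollification step must be handled carefully if it is still to produce an $O(1/R)$ rate. For comparison, the paper proves the lemma without any special functions: writing the quadratic form as $(2\pi)^{n}\int_{\mS^{n-1}}(u*\Phi_{R})\ol{f}\,\rmd S$ with $u=f\,\rmd S$ and $\Phi_{R}=R^{n-1}\check{\phi}(R\,\cdot\,)$, splitting $\Phi_{R}$ by a cutoff into an off-diagonal part, a diagonal main term, and a commutator-type remainder estimated via the exponential map, which yields the error $R^{-1}\norm{f}_{L^{2}(\mS^{n-1})}\norm{f}_{H^{1}(\mS^{n-1})}$ directly; this structural argument is what the authors can transfer to $F_{\kappa}$ in Section 3, whereas your Bessel route is specific to $A_{\kappa}$. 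If you pursue your approach, correct the exponent and carry out the Olver-type estimates in full.
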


Before proving \Cref{lem:quantative-inversion}, we show how it can be used to prove \eqref{eq:stable-region-Herglotz}. 

\begin{proof}[Proof of \eqref{eq:stable-region-Herglotz} in \Cref{thm:1}]
Since $\sigma_{1}(A_{\kappa}) = \norm{A_{\kappa}}_{L^{2}(\mS^{n-1})\rightarrow L^{2}(B_{1})}$, the Agmon-H\"ormander estimate \eqref{eq:normalized-AG-inequality} implies that 
\begin{equation}
\sigma_{j} \le C(n) \quad \text{for all $j\ge 1$.} \label{eq:upper-bound-stable-region}
\end{equation}
We now consider the lower bound. Given a fixed small constant $\delta$, we define 
\begin{equation*}
X_{\delta,\kappa} = {\rm span}\,\{ \phi_{\ell} : \ell^{\frac{2}{n-1}} \le \delta \kappa^{2} \}.
\end{equation*}
Then $N := \dim\,(X_{\delta,\kappa}) = \lfloor \delta^{\frac{n-1}{2}} \kappa^{n-1} \rfloor$. For $f\in X_{\delta,\kappa}$, we can write $f = \sum_{\ell=1}^{N}(f,\phi_{\ell})\phi_{\ell}$ where $(f,g)$ denotes the inner product in $L^{2}(\mS^{n-1})$. We see that  
\begin{equation}
\norm{f}_{H^{1}(\mS^{n-1})}^{2} = \sum_{\ell=1}^{N} \ell^{\frac{2}{n-1}} \abs{(f,\phi_{\ell})}^{2} \le \delta \kappa^{2} \sum_{\ell=1}^{N} \abs{(f,\phi_{\ell})}^{2} = \delta\kappa^{2}\norm{f}_{L^{2}(\mS^{n-1})}^{2}. \label{eq:expansion-space-X}
\end{equation}
By using \Cref{lem:quantative-inversion} with a fixed radial function $\psi \in C_{c}(\mR)$ such that $0 \le \psi \le 1$, $\psi(x)=1$ for $\abs{x} \le 1/2$, and $\psi(x)=0$ for $\abs{x} \ge 1$, from \eqref{eq:Agmon-Hormander} we have 
\begin{equation}
\begin{aligned}
& \norm{A_{\kappa}f}_{L^{2}(B_{1})}^{2} \ge \frac{1}{\kappa} \int_{\mR^{n}} \abs{(f\,\rmd S)\ehat(\xi)}^{2} \psi(\abs{\xi}/\kappa) \, \rmd \xi \\
& \quad \ge 2(2\pi)^{n-1} \norm{\psi}_{L^{1}(\mR_{+})} \norm{f}_{L^{2}(\mS^{n-1})}^{2} - \frac{C_{n,\psi}}{\kappa} \norm{f}_{L^{2}(\mS^{n-1})} \norm{f}_{H^{1}(\mS^{n-1})}. \label{eq:application-quantative-inverse} 
\end{aligned}
\end{equation}
We now use \eqref{eq:expansion-space-X} and choose $\delta = \delta_{n,\psi}>0$ sufficiently small such that the right hand side of \eqref{eq:application-quantative-inverse} is greater than 
\begin{equation*}
(2\pi)^{n-1} \norm{\psi}_{L^{1}(\mR_{+})} \norm{f}_{L^{2}(\mS^{n-1})}^{2}. 
\end{equation*}
This gives that 
\begin{equation}
\norm{A_{\kappa}f}_{L^{2}(B_{1})}^{2} \ge c_{n,\psi} \norm{f}_{L^{2}(\mS^{n-1})}^{2} \quad \text{for all $f\in X_{\delta,\kappa}$ and $\kappa \ge 1$} \label{eq:lower-bound-Herglotz-stable-region}
\end{equation}
for some constant $c_{n,\psi} > 0$. For each $j \ge 1$, the Courant max-min principle implies that 
\begin{equation*}
\sigma_{j}(A_{\kappa}) = \max_{X} \min_{\{f\in X \,:\, \norm{f}_{L^{2}(\mS^{n-1})}=1\}} \norm{A_{\kappa}f}_{L^{2}(B_{1})}
\end{equation*}
where the maximum is over all subspaces $X \subset L^{2}(\mS^{n-1})$ with $\dim\,(X) = j$. When $j = \lfloor \delta^{\frac{n-1}{2}} \kappa^{n-1} \rfloor$, we have $\dim(X_{\delta,\kappa}) = j$ and therefore by \eqref{eq:lower-bound-Herglotz-stable-region} 
\begin{equation*}
\sigma_{j}(A_{\kappa}) \ge \min_{\{f\in X_{\delta,\kappa} \,:\, \norm{f}_{L^{2}(\mS^{n-1})=1}\}} \norm{A_{\kappa}f}_{L^{2}(B_{1})} \ge c_{n,\psi}^{1/2}.
\end{equation*}
Combining this with \eqref{eq:upper-bound-stable-region}, we conclude \eqref{eq:stable-region-Herglotz}. 
\end{proof}

It remains to prove the quantitative Agmon-H\"{o}rmander estimate in \Cref{lem:quantative-inversion}. 

\begin{proof}[Proof of \Cref{lem:quantative-inversion}]
Write $u = f \, \rmd S$ and $\Phi_{R}(x) := R^{n-1} \check{\phi}(Rx)$ where $\check{\phi}$ is the inverse Fourier transform of $\phi$. Since $\hat{\Phi}_{R}(\xi) = R^{-1} \phi(\xi/R)$, Parseval's identity yields 
\begin{equation*}
\begin{aligned}
& I:= \frac{1}{R} \int_{\mR^{n}} \abs{\hat{u}(\xi)}^{2} \phi(\xi/R) \, \rmd S = \int_{\mR^{n}} \abs{\hat{u}(\xi)}^{2} \hat{\Phi}_{R}(\xi) \, \rmd \xi \\
& \quad = (2\pi)^{n} \int_{\mR^{n}} (u*\Phi_{R})(x) \overline{u(x)} \, \rmd x = (2\pi)^{n} \int_{\mS^{n-1}} (u*\Phi_{R})(\hat{x}) \overline{f(\hat{x})} \, \rmd S(\hat{x}). 
\end{aligned}
\end{equation*}
By choosing $\chi \in C_{c}^{\infty}(\mR^{n})$ satisfying $0 \le \chi \le 1$, $\chi=1$ in $\overline{B}_{1/4}$ and $\chi=0$ outside $B_{1/2}$, and writing $\Phi_R = \chi \Phi_R + (1-\chi) \Phi_R$, we obtain 
\begin{equation*}
(u*\Phi_{R})(\hat{x}) = I_{1}(\hat{x}) + I_{2}(\hat{x}) + I_{3}(\hat{x}) \quad \text{for all $\hat{x} \in \mS^{n-1}$},
\end{equation*}
where we define 
\begin{equation*}
\begin{aligned}
& I_{1}(\hat{x}) := \int_{\mS^{n-1}} \Phi_{R}(\hat{x}-\hat{y}) (1 - \chi(\hat{x}-\hat{y})) f(\hat{y}) \, \rmd S(\hat{y}), \\
& I_{2}(\hat{x}) := f(\hat{x}) \int_{\mS^{n-1}} (\chi \Phi_{R})(\hat{x}-\hat{y}) \, \rmd S(\hat{y}) \\
& I_{3}(\hat{x}) := \int_{\mS^{n-1}} (\chi \Phi_{R})(\hat{x}-\hat{y}) (f(\hat{y})-f(\hat{x})) \, \rmd S(\hat{y}),
\end{aligned}
\end{equation*}
then we have 

\medskip

\noindent\textbf{We first estimate $I_{1}(\hat{x})$ for each $\hat{x} \in \mS^{n-1}$.} If $\hat{y} \in \supp\,(\tilde{f})$, where $\tilde{f}(\hat{y}) := (1 - \chi(\hat{x}-\hat{y}))f(\hat{y})$, then $\abs{\hat{x}-\hat{y}} \ge 1/4$. Writing $\tilde{\Phi}(z) = \abs{z}^{n} \check{\phi}(z)$, we have 
\begin{equation*}
\begin{aligned}
& \abs{\Phi_{R}(\hat{x}-\hat{y})} \le 4^{n} \abs{\hat{x}-\hat{y}}^{n} \abs{\Phi_{R}(\hat{x}-\hat{y})} \\
& \quad \le \frac{4^{n}}{R} \abs{\tilde{\Phi}(R(\hat{x}-\hat{y})} \le \frac{4^{n}}{R} \norm{\tilde{\Phi}}_{L^{\infty}(\mR^{n})} \quad \text{for all $\hat{y} \in \supp\,(\tilde{f})$,}
\end{aligned}
\end{equation*}
since $\tilde{\Phi}$ is a Schwartz function on $\mR^{n}$. Then 
\begin{equation*}
\sup_{\hat{x}\in\mS^{n-1}} \abs{I_{1}(\hat{x})} \le \frac{C_{n,\phi}}{R} \int_{\mS^{n-1}} \abs{\tilde{f}(\hat{y})} \,\rmd S(\hat{y}) \le \frac{C_{n,\phi}}{R} \norm{f}_{L^{2}(\mS^{n-1})},
\end{equation*}
and thus we derive that 
\begin{equation}
\left| \int_{\mS^{n-1}} I_{1}(\hat{x}) \overline{f(\hat{x})} \, \rmd S(\hat{x}) \right| \le \frac{C}{R} \norm{f}_{L^{2}(\mS^{n-1})}^{2}. \label{eq:estimate-I1}
\end{equation}

\medskip

\noindent\textbf{We now estimate $I_{2}(\hat{x})$.} Let $\mathcal{R}_{\hat{x}} \in {\rm SO}(n)$ be a rotation such that $\mathcal{R}_{\hat{x}}\hat{x} = (0,\cdots,0,1)$. Hence we see that 
\begin{equation*}
\{ \hat{y} \in \mS^{n-1} : \abs{\hat{y}-\hat{x}} < 1/2 \} = \{ \hat{y} = \mathcal{R}_{\hat{x}}^{-1}(y',h(y')) : y'\in B_{c}^{n-1} \},
\end{equation*}
for some $0 < c < 1$, where $B_{r}^{n-1}$ is the ball in $\mR^{n-1}$ with radius $r$ and $h(y') \equiv \sqrt{1-\abs{y'}^{2}}$. From the assumption that $\phi$ is radially symmetric we see that $\check{\phi}$ is also radially symmetric, i.e. $\check{\phi}\circ\mathcal{R}_{\hat{x}}^{-1} = \check{\phi}$, therefore 
\begin{equation*}
\begin{aligned}
& I_{2}(\hat{x}) = f(\hat{x}) \int_{B_{c}^{n-1}} (\chi \Phi_{R})\circ \mathcal{R}_{\hat{x}}^{-1} (-y',h(0)-h(y'))\cdot a(y') \, \rmd y' \\
& \quad = f(\hat{x}) \int_{B_{cR}^{n-1}} (R^{1-n}\chi \Phi_{R})\circ\mathcal{R}_{\hat{x}}^{-1}(-z'/R,h(0)-h(z'/R)) \cdot a(z'/R) \, \rmd z' \\
& \quad = f(\hat{x}) \int_{B_{cR}^{n-1}} \check{\phi} (-z',R(h(0)-h(z'/R))) \times \\
& \qquad \times (\chi\circ\mathcal{R}_{\hat{x}}^{-1})(-z'/R,h(0)-h(z'/R)) a(z'/R) \, \rmd z'
\end{aligned}
\end{equation*}
where $a(y')=(1+\abs{\nabla'h(y')}^{2})^{1/2}$. Since $\check{\phi}$ is Schwartz, $\chi\circ\mathcal{R}_{\hat{x}}^{-1} \in C_{c}^{\infty}(\mR^{n-1})$, $\chi\circ\mathcal{R}_{\hat{x}}^{-1}(0)=1$ and $a$ is smooth with $a(0)=1$, using Taylor's theorem we see that 
\begin{equation*}
\begin{aligned}
& I_{2}(\hat{x}) = f(\hat{x}) \left( \int_{\mR^{n-1}} \check{\phi}(-z',-\nabla'h(0)\cdot z')\,\rmd z' + Q_{R}(\hat{x}) \right) \\
& \quad =f(\hat{x}) \int_{\mR^{n-1}} \check{\phi}(-z',0)\,\rmd z' + f(\hat{x})Q_{R}(\hat{x}) \\
& \quad =f(\hat{x}) (2\pi)^{-1} \int_{\mR} \phi(0,z_{n}) \, \rmd z_{n} + f(\hat{x})Q_{R}(\hat{x}) \\
& \quad =f(\hat{x})\pi^{-1} \int_{0}^{\infty} \varphi(z_{n}) \, \rmd z_{n} + f(\hat{x})Q_{R}(\hat{x})
\end{aligned}
\end{equation*}
with $\sup_{\hat{x}\in\mS^{n-1}}\abs{Q_{R}(\hat{x})} \le C_{n,\phi}/R$. This yields the following estimate involving $I_{2}$: 
\begin{equation}
\begin{aligned}
& \left| \int_{\mS^{n-1}} I_{2}(\hat{x}) \overline{f(\hat{x})} \, \rmd S(\hat{x}) - \pi^{-1} \left( \int_{0}^{\infty} \varphi(r) \, \rmd r \right) \norm{f}_{L^{2}(\mS^{n-1})}^{2} \right| \\
& \quad \le \frac{C}{R} \norm{f}_{L^{2}(\mS^{n-1})}^{2}.
\end{aligned} \label{eq:estimate-I2}
\end{equation}

\medskip

\noindent\textbf{Finally, we want to estimate $I_{3}$.} We make the change of variables $y = \exp_{\hat{x}}(w)$, where $\exp_{\hat{x}}$ is the exponential map on $\mS^{n-1}$, to obtain that 
\begin{equation*}
I_{3}(\hat{x}) = \int_{T_{\hat{x}}\mS^{n-1}} (\chi \Phi_{R})(\hat{x}-\exp_{\hat{x}}(w)) (f(\exp_{\hat{x}}(w)) - f(\hat{x})) \, \rmd T(w). 
\end{equation*}
Here $\rmd T$ is the volume form on $T_{\hat{x}}\mS^{n-1} \cong \mR^{n-1}$ corresponding to $\rmd S$. This is a valid change of variables in $\mathrm{supp}(\chi(\hat{x}-\,\cdot\,))$. We have the Taylor expansion 
\begin{equation*}
\exp_{\hat{x}}(w) = \hat{x} + w + O(\abs{w}^{2}).
\end{equation*}
Thus 
\begin{equation*}
(\chi\Phi_{R})(\hat{x} - \exp_{\hat{x}}(w)) = (\chi\Phi_{R})(-w) + R^{-1} g_{R}(x,w),
\end{equation*}
where the Schwartz decay of $\check{\phi}$, the support of $\chi$ and a short computation give that 
\begin{equation*}
\sup_{(\hat{x},R)\in\mS^{n-1}\times\mR_{\ge 1}} \int_{T_{\hat{x}}\mS^{n-1}} \abs{g_{R}(\hat{x},w)} \, \rmd T(w) \le C_{n,\phi}. 
\end{equation*}
Thus we have 
\begin{equation*}
I_{3}(\hat{x}) = \int_{T_{\hat{x}}\mS^{n-1}} \int_{0}^{1} (\chi\Phi_{R})(-w) \, \rmd f(\exp_{\hat{x}}(tw)) (\partial_{t}\exp_{\hat{x}}(tw)) \, \rmd t \, \rmd T(w) + Q_{R}(\hat{x}),
\end{equation*}
with $\sup_{\hat{x}\in\mS^{n-1}} \abs{Q_{R}(\hat{x})} \le C_{n,\phi} R^{-1}( \norm{f}_{L^2(\mS^{n-1})} + |f(\hat{x})|)$ (this requires another change of variables $w = \exp_{\hat{x}}^{-1}(\hat{y})$). Integrating this against $\overline{f(\hat{x})}$ over $\mS^{n-1}$ gives 
\begin{equation*}
\begin{aligned}
& \left| \int_{\mS^{n-1}} I_{3}(\hat{x}) \overline{f(\hat{x})} \, \rmd S(\hat{x}) \right| \\
& \quad \le \int_{\mS^{n-1}} \int_{\mR^{n-1}} \int_{0}^{1} \abs{(\Phi_{R}\chi)(-w)} \abs{\rmd f(\exp_{\hat{x}}(tw))} \abs{w} \abs{f(\hat{x})} \, \rmd t \, \rmd w \, \rmd S(\hat{x}) \\
& \qquad + \frac{C_{n,\phi}}{R} \norm{f}_{L^{2}(\mS^{n-1})}^{2}
\end{aligned}
\end{equation*}
where we have identified the tangent space of $\mS^{n-1}$ near $\hat{x}$ with $\mR^{n-1}$. The map $\hat{x} \mapsto \exp_{\hat{x}}(w)$ is a local diffeomorphism with uniform bounds on its derivatives when $w$ is sufficiently small. Thus, using Fubini and Cauchy-Schwartz, we obtain 
\begin{equation*}
\begin{aligned}
& \left| \int_{\mS^{n-1}} I_{3}(\hat{x}) \overline{f(\hat{x})} \, \rmd S(\hat{x}) \right| \\
& \quad \le C_{n} \norm{\rmd f}_{L^{2}(\mS^{n-1})} \norm{f}_{L^{2}(\mS^{n-1})} \int_{\mR^{n-1}} \abs{w} \abs{(\Phi_{R}\chi)(-w)} \, \rmd w + \frac{C_{n,\phi}}{R} \norm{f}_{L^{2}(\mS^{n-1})}^{2} \\
& \quad \le \frac{C_{n,\phi}}{R} \norm{\rmd f}_{L^{2}(\mS^{n-1})} \norm{f}_{L^{2}(\mS^{n-1})} + \frac{C_{n,\phi}}{R} \norm{f}_{L^{2}(\mS^{n-1})}^{2} \\
& \quad \le \frac{C_{n,\phi}}{R} \norm{f}_{H^{1}(\mS^{n-1})} \norm{f}_{L^{2}(\mS^{n-1})}.
\end{aligned}
\end{equation*}
This proves 
\begin{equation}
\left| \int_{\mS^{n-1}} I_{3}(\hat{x}) \overline{f(\hat{x})} \, \rmd S(\hat{x}) \right| \le \frac{C_{n,\phi}}{R} \norm{f}_{L^{2}(\mS^{n-1})} \norm{f}_{H^{1}(\mS^{n-1})}. \label{eq:estimate-I3}
\end{equation}

\medskip

\noindent\textbf{Conclusion.} By summing \eqref{eq:estimate-I1}, \eqref{eq:estimate-I2} and \eqref{eq:estimate-I3}, we conclude \Cref{lem:quantative-inversion}. 
\end{proof}

\addtocontents{toc}{\SkipTocEntry}
\subsection{The unstable region}

We now complement the bound in the stable region with a decay rate for the singular values in the unstable region. This will be based on smoothing properties of the operator $A_{\kappa}$. First we need the following lemma. 

\begin{lemma}\label{lem:compute-coefficient}
Let $n \ge 2$, $\kappa \ge 1$, $0 \le r \le 1$ and $\theta \in \mS^{n-1}$. There exists a constant $C=C(n)>0$ such that for any $m\ge 0$ one has 
\begin{equation*}
\norm{(-\Delta_{\mS^{n-1}})^{m}(e^{\bfi\kappa r\br{\theta,\cdot}})}_{L^{2}(\mS^{n-1})} \le C(C(m\kappa))^{2m}.
\end{equation*}
\end{lemma}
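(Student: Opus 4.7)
The plan is to pass from the $L^2$ norm to an $L^\infty$ norm on $\mS^{n-1}$, decompose the spherical Laplacian into angular-momentum operators, and then apply a Faà di Bruno--type expansion to iterated derivatives of the plane wave $e^{\bfi\lambda\br{\theta,\omega}}$. Set $\lambda := \kappa r \in [0,\kappa]$ and $u(\omega) := e^{\bfi\lambda\br{\theta,\omega}}$. Since $\mS^{n-1}$ has finite volume, it will be enough to show $\norm{(-\Delta_{\mS^{n-1}})^m u}_{L^\infty(\mS^{n-1})} \le (C(m\kappa))^{2m}$, and then multiply by $\operatorname{vol}(\mS^{n-1})^{1/2}$.

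I will use the classical identity $-\Delta_{\mS^{n-1}} = \sum_{1 \le i<j \le n} L_{ij}^2$ with the angular-momentum fields $L_{ij} = x_i \p_j - x_j \p_i$. Expanding $(-\Delta_{\mS^{n-1}})^m$ writes it as a sum of at most $\binom{n}{2}^m \le n^{2m}$ ordered products $X_1 X_2 \cdots X_{2m}$ with each $X_k \in \{L_{ij}\}_{i<j}$. The key combinatorial input is the noncommutative Faà di Bruno formula
\[
X_1 X_2 \cdots X_p (e^{\bfi\lambda\phi}) = e^{\bfi\lambda\phi} \sum_{\pi \in \Pi_p}(\bfi\lambda)^{|\pi|} \prod_{B\in\pi} X_B\phi,
\]
where $\Pi_p$ is the set of set partitions of $\{1,\ldots,p\}$, $|\pi|$ is the number of blocks, and $X_B\phi := X_{b_k}\cdots X_{b_1}\phi$ for $B=\{b_1<\cdots<b_k\}$. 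This I would prove by a short induction on $p$, noting that $X_{p+1}$ acts on the right-hand side by the product rule: it either hits $e^{\bfi\lambda\phi}$ (which corresponds to adding a singleton block $\{p+1\}$ to $\pi$) or hits some factor $X_B\phi$ (corresponding to adjoining $p+1$ to $B$ as its maximum).

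With $\phi(x)=\br{\theta,x}$ linear and each $L_{ij}$ preserving the space of linear functions on $\mR^n$ with dimensional operator-norm bound, iteration gives $|X_B\phi| \le C_n^{|B|}$ on $\mS^{n-1}$. Coupled with the elementary Bell polynomial bound
\[
\sum_{\pi\in\Pi_p}\lambda^{|\pi|} = \sum_{k=0}^p S(p,k)\lambda^k \le (p+\lambda)^p,
\]
which follows from the standard Stirling-number estimate $S(p,k) \le \tfrac{1}{2}\binom{p}{k}k^{p-k}$ after the change of index $j = p-k$, this yields $|X_1\cdots X_{2m}(e^{\bfi\lambda\phi})| \le C_n^{2m}(2m+\lambda)^{2m}$ on $\mS^{n-1}$. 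Summing over the $\le n^{2m}$ products and using $\lambda \le \kappa$ together with $2m+\kappa \le C m\kappa$ for $m,\kappa\ge1$ gives the target bound.

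The main obstacle I anticipate is bookkeeping the combinatorial explosion produced by iterating $2m$ first-order derivatives: naive expansion produces $B_{2m} \sim (2m)^{2m}$ terms, and the factor $m^{2m}$ in the stated bound is exactly the combinatorial cost that the Bell polynomial estimate quantifies. A plain Cauchy-type bound $|\p^\alpha u|\lesssim \lambda^{|\alpha|}$ by itself controls each expanded term by $(C\lambda)^{2m}$, but one must then multiply by the $(Cm)^{2m}$ number of terms to recover the claim; the Faà di Bruno identity provides exactly the structural way to package this.
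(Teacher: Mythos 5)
Your proof is correct, but it takes a genuinely different route from the paper's. The paper stays inside the zonal framework: it shows by induction that $(-\Delta_{\mS^{n-1}})^{m}e^{\bfi\lambda\theta\cdot\omega}=e^{\bfi\lambda\theta\cdot\omega}\sum_{j=0}^{2m}c_{m,j}(\theta\cdot\omega)^{j}$ with $\lambda=\kappa r$, derives an explicit recursion for the coefficients from the action of $-\Delta_{\mS^{n-1}}$ on $e^{\bfi\lambda\theta\cdot\omega}(\theta\cdot\omega)^{j}$, and proves $\abs{c_{m,j}}\le C^{2m+1}(2m)!\,\lambda^{2m}$ inductively, which gives the stated bound after $(2m)!\le(2m)^{2m}$ and $\lambda\le\kappa$. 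You instead expand $(-\Delta_{\mS^{n-1}})^{m}$, up to the harmless sign coming from $\Delta_{\mS^{n-1}}=\sum_{i<j}L_{ij}^{2}$, into at most $n^{2m}$ ordered products of $2m$ rotation fields, and control each product acting on the plane wave via the noncommutative exponential (Fa\`a di Bruno) formula together with the Stirling-number bound $\sum_{\pi\in\Pi_{p}}\lambda^{\abs{\pi}}\le(p+\lambda)^{p}$; since each $L_{ij}$ maps linear forms to linear forms without increasing the coefficient norm, $\abs{X_{B}\phi}\le1$ on $\mS^{n-1}$ and the estimate closes with $(2m+\lambda)^{2m}\le(3m\kappa)^{2m}$ for $m,\kappa\ge1$ (the case $m=0$ being trivial). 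Two cosmetic points: your block-ordering convention in the induction (adjoining $p+1$ as the maximum of a block while stating the identity for $X_{1}\cdots X_{p}$ in the original order) is not literally consistent as written, but any fixed convention yields the identity and only $\abs{X_{B}\phi}$ enters the bound; and the sign slip in $-\Delta_{\mS^{n-1}}=\sum_{i<j}L_{ij}^{2}$ is irrelevant for absolute values. What your route buys is robustness and generality: it never uses that the phase depends on $\omega$ only through $\theta\cdot\omega$, so it would apply to more general phases with bounded iterated $L_{ij}$-derivatives, at the cost of the partition combinatorics; the paper's computation is more hands-on, producing an explicit polynomial representation and a bound that also degenerates correctly as $\lambda\to0$, though neither feature is needed for the lemma.
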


\begin{proof}
Write $\lambda = \kappa r$. We will first prove that 
\begin{subequations}
\begin{equation}
(-\Delta_{\mS^{n-1}},\omega)^{m} e^{\bfi\lambda \theta\cdot \omega} = e^{\bfi\kappa\theta \cdot \omega} \sum_{j=0}^{2m} c_{m,j} (\theta\cdot\omega)^{j} \quad \text{for all $\omega\in\mS^{n-1}$.} \label{eq:to-do-lemma}
\end{equation}
with 
\begin{equation}
\abs{c_{m,j}} \le C^{2m+1}(2m)! \lambda^{2m}. \label{eq:to-do-lemma-coefficient}
\end{equation}
\end{subequations}
These facts together with the estimates $\norm{\br{\theta,\cdot}^{j}}_{L^{2}(\mS^{n-1})} \le \abs{\mS^{n-1}}^{1/2}$, $m \le 2^{m}$ and $\lambda \le \kappa$ prove the lemma. 

To show \eqref{eq:to-do-lemma}, we observe that for each $j \ge 0$ one has 
\begin{equation}
\begin{aligned}
& (-\Delta_{\mS^{n-1},\omega})(e^{\bfi\lambda\theta\cdot\omega}(\theta\cdot\omega)^{j}) \\
& \quad = (-\Delta_{y}) \left.\left( e^{\bfi\lambda\theta\cdot\frac{y}{\abs{y}}}(\theta\cdot y/\abs{y})^{j} \right) \right|_{y=\omega} \\
& \quad = e^{\bfi\lambda\theta\cdot\omega} \left( -\lambda^{2}(\theta\cdot\omega)^{j+2} + \bfi\lambda(n-1-j)(\theta\cdot\omega)^{j+1} \right.\\
& \qquad \left. + (\lambda^{2} - j(j-1))(\theta\cdot\omega)^{j} + (-\bfi\lambda j + j(j-1))(\theta\cdot\omega)^{j-1} \right)
\end{aligned} \label{eq:induction1}
\end{equation}
for all $\omega \in \mS^{n-1}$. It follows that there exist constants $c_{m,j}\in\mC$ such that \eqref{eq:to-do-lemma} holds. It remains to estimate the coefficients $c_{m,j}$. 

Applying $-\Delta_{\mS^{n-1},\omega}$ to the identity \eqref{eq:to-do-lemma}, we reach 
\begin{equation}
\begin{aligned}
& \sum_{j=0}^{2m} c_{m,j} (-\Delta_{\mS^{n-1},\omega})(e^{\bfi\lambda\theta\cdot\omega}(\theta\cdot\omega)^{j}) \\
& \quad = (-\Delta_{\mS^{n-1},\omega})^{m+1}e^{\bfi\lambda\theta\cdot\omega} \equiv \sum_{j=0}^{2(m+1)} c_{m+1,j} (e^{\bfi\lambda\theta\cdot\omega}(\theta\cdot\omega)^{j}). \label{eq:induction2}
\end{aligned}
\end{equation}
Combining \eqref{eq:induction1} and \eqref{eq:induction2}, if we additionally set $c_{m,j}\equiv 0$ when $j \notin \{0,\cdots,2m\}$ for simplicity, then the coefficients satisfy the recurrence relation 
\begin{equation}
\begin{aligned}
c_{m+1,j} &= -\lambda^{2} c_{m,j-2} + \bfi \lambda (n-j) c_{m,j-1} + (1 + \lambda^{2} - j(j-1)) c_{m,j} \\
& \quad + (-\bfi\lambda(j+1)+j(j+1))c_{m,j+1}
\end{aligned}
\end{equation}
for all $j=0,\cdots,2m$. 

Clearly \eqref{eq:to-do-lemma-coefficient} holds for $j=0$ and $m=0$. Suppose that \eqref{eq:to-do-lemma-coefficient} holds for $m=\ell$. For each $j=2,3,\cdots,2\ell$, using the induction hypothesis, the trivial estimates 
\begin{equation*}
\begin{aligned}
& \abs{n-j} \le n+2\ell ,\quad \abs{\lambda^{2}-j(j-1)}\le \lambda^{2}(1+4\ell^{2} - 2\ell^{2}), \\
& \abs{-\bfi\lambda(j+1)+j(j+1)} \le r\abs{2\ell+1+4\ell^{2}+2\ell} = \lambda(2\ell+1)^{2},
\end{aligned}
\end{equation*}
we see that 
\begin{equation*}
\begin{aligned}
& \abs{c_{\ell+1},j} \le \lambda^{2\ell+2} C^{2\ell+1} (2\ell)! (2+n+2(2\ell+1)^{2}) \\
& \quad \le \lambda^{2\ell+2} C^{2\ell+1} (2\ell)! (2(2+n)(2\ell+1)^{2}) \\
& \quad \le \lambda^{2\ell+2} 2(2+n) C^{2\ell+1} (2\ell)! (2\ell+1) (2\ell+2) \\
& \quad = \lambda^{2(\ell+1)} C^{2(\ell+1)+1} (2(\ell+1))!
\end{aligned}
\end{equation*}
provided $2(2+n) \le C^{2}$. We can also estimate the other terms in a similar manner, and we conclude the lemma by induction. 
\end{proof}

We use the previous lemma to prove the following smoothing estimate for $A_{\kappa}$. 

\begin{lemma}\label{lem:smoothing-Herglotz}
There is $C=C(n)>0$ such that for any integer $m \ge 0$, one has 
\begin{equation*}
\norm{A_{\kappa}f}_{L^{2}(B_{1})} \le C(Cm\kappa)^{2m} \norm{f}_{H^{-2m}(\mS^{n-1})}
\end{equation*}
for any $f\in L^{2}(\mS^{n-1})$. 
\end{lemma}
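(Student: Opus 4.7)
The plan is to shift the $2m$ derivatives off $f$ and onto the plane-wave kernel $e^{\bfi\kappa x\cdot\omega}$ of $A_\kappa$ via self-adjointness of $(1-\Delta_{\mS^{n-1}})^m$ on the closed manifold $\mS^{n-1}$, then expand the resulting symbol into monomials in $\omega$ so that the remaining oscillatory integrals can be controlled by the Agmon--H\"ormander inequality \eqref{eq:normalized-AG-inequality} itself. The main conceptual point---and the trap to avoid---is the temptation to put all $2m$ derivatives on the exponential and then estimate $|A_\kappa f(x)|$ pointwise by Cauchy--Schwarz in $L^2(\mS^{n-1})$ together with the $L^2$ bound from \Cref{lem:compute-coefficient}; that naive route costs an extra factor of $\kappa^{\frac{n-1}{2}}$ upon integrating over $B_1$, giving the wrong scaling in $\kappa$.

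Concretely, I would set $g:=(1-\Delta_{\mS^{n-1}})^{-m}f$, which by the Weyl asymptotics \eqref{eq:Weyl} satisfies $\|g\|_{L^2(\mS^{n-1})}\le C_n^m\|f\|_{H^{-2m}(\mS^{n-1})}$, the factor $C_n^m$ being harmlessly absorbable into $(Cm\kappa)^{2m}$. Self-adjointness of $(1-\Delta_\omega)^m$ on $\mS^{n-1}$ gives
\begin{equation*}
A_\kappa f(x)=\kappa^{\frac{n-1}{2}}\int_{\mS^{n-1}}\bigl[(1-\Delta_\omega)^m e^{\bfi\kappa x\cdot\omega}\bigr]\,g(\omega)\,\rmd S(\omega).
\end{equation*}
From the formula \eqref{eq:to-do-lemma} and the coefficient estimate \eqref{eq:to-do-lemma-coefficient} derived inside the proof of \Cref{lem:compute-coefficient}, combined with the binomial expansion $(1-\Delta)^m=\sum_k\binom{m}{k}(-\Delta)^k$, one obtains coefficients $p_{m,j}(\lambda)$, $0\le j\le 2m$, such that
\begin{equation*}
(1-\Delta_\omega)^m e^{\bfi\kappa x\cdot\omega}=e^{\bfi\kappa x\cdot\omega}\sum_{j=0}^{2m}p_{m,j}(\kappa|x|)\,(\omega\cdot\hat{x})^j,
\end{equation*}
with the uniform pointwise bound $|p_{m,j}(\kappa|x|)|\le C(Cm\kappa)^{2m}$ for all $|x|\le 1$ and $\kappa\ge 1$.

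The last step is to apply the multinomial expansion $(\omega\cdot\hat{x})^j=\sum_{|\alpha|=j}\frac{j!}{\alpha!}\hat{x}^\alpha\omega^\alpha$, which separates the $x$- and $\omega$-dependence and produces
\begin{equation*}
A_\kappa f(x)=\sum_{j=0}^{2m}\sum_{|\alpha|=j}\frac{j!}{\alpha!}\,p_{m,j}(\kappa|x|)\,\hat{x}^\alpha\,A_\kappa(\omega^\alpha g)(x).
\end{equation*}
Taking the $L^2(B_1)$ norm, using the pointwise bounds $|p_{m,j}|\le C(Cm\kappa)^{2m}$ and $|\hat{x}^\alpha|\le 1$ on $B_1\setminus\{0\}$, and then invoking the Agmon--H\"ormander estimate \eqref{eq:normalized-AG-inequality} on each modified Herglotz density (noting $\|\omega^\alpha g\|_{L^2(\mS^{n-1})}\le \|g\|_{L^2(\mS^{n-1})}$ since $|\omega^\alpha|\le 1$), we arrive at
\begin{equation*}
\|A_\kappa f\|_{L^2(B_1)}\le C(Cm\kappa)^{2m}\Bigl(\sum_{j=0}^{2m}n^j\Bigr)\|g\|_{L^2(\mS^{n-1})}\le C'(C'm\kappa)^{2m}\|f\|_{H^{-2m}(\mS^{n-1})},
\end{equation*}
after absorbing the combinatorial sum $\sum_{j=0}^{2m}n^j\le 2n^{2m}$ and the Weyl constant $C_n^m$ into an enlarged $C'=C'(n)$. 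The only genuine obstacle is the decision in the first paragraph: once one commits to Agmon--H\"ormander applied to the $\omega^\alpha g$'s rather than to a pointwise Cauchy--Schwarz bound on the kernel, the rest of the argument is routine bookkeeping.
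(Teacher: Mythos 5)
Your proof is correct, and at the decisive step it takes a genuinely different route from the paper. The paper splits off the constant mode $f_1=(f,\phi_1)\phi_1$, writes $f_2=(-\Delta_{\mS^{n-1}})^m\tilde f_2$, integrates by parts to put $(-\Delta_{\omega})^m$ on the plane wave, and then invokes \Cref{lem:compute-coefficient}, i.e.\ it bounds the resulting $\omega$-integral by Cauchy--Schwarz in $L^{2}(\mS^{n-1})$ pointwise in $x$ and integrates over $B_1$. You instead invert $(1-\Delta_{\mS^{n-1}})^m$ globally (so no constant-mode splitting is needed), expand the conjugated symbol via $(\omega\cdot\hat x)^j=\sum_{|\alpha|=j}\frac{j!}{\alpha!}\hat x^{\alpha}\omega^{\alpha}$ to separate the $x$- and $\omega$-dependence, and apply the Agmon--H\"ormander bound \eqref{eq:normalized-AG-inequality} to each modified density $\omega^{\alpha}g$. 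This buys something concrete: the paper's displayed identity for $(A_{\kappa}f_2)(x)$ omits the normalizing factor $\kappa^{\frac{n-1}{2}}$, and once it is restored the pointwise Cauchy--Schwarz route only yields $C\kappa^{\frac{n-1}{2}}(Cm\kappa)^{2m}\norm{f}_{H^{-2m}}$ --- exactly the loss you flag --- whereas your separation-of-variables argument recovers the stated scaling, since Agmon--H\"ormander already absorbs the $\kappa^{\frac{n-1}{2}}$. The price is that you need the coefficient expansion \eqref{eq:to-do-lemma}--\eqref{eq:to-do-lemma-coefficient} from inside the proof of \Cref{lem:compute-coefficient} rather than only its $L^2$ statement; note that, just as for \eqref{eq:to-do-lemma-coefficient} itself, your uniform bound $\abs{p_{m,j}(\kappa\abs{x})}\le C(Cm\kappa)^{2m}$ really rests on replacing $\lambda^{2m}$ by $\max(\lambda,1)^{2m}\le\kappa^{2m}$ (the pure power $\lambda^{2m}$ fails for small $\lambda=\kappa\abs{x}$), which is harmless here because $\kappa\ge 1$ and $\abs{x}\le 1$. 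The remaining bookkeeping (Weyl asymptotics to compare $\norm{g}_{L^2}$ with $\norm{f}_{H^{-2m}}$, $\abs{\omega^{\alpha}}\le 1$, and $\sum_{j\le 2m}n^{j}\le 2n^{2m}$ absorbed into the constant) is sound.
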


\begin{proof}
Let $(\phi_{\ell})$ be an orthonormal basis of $L^{2}(\mS^{n-1})$ consisting of eigenfunctions of $-\Delta_{\mS^{n-1}}$ with eigenvalues $0=\lambda_{1}<\lambda_{2}\le\lambda_{3}\le\cdots$. Let $f \in L^{2}(\mS^{n-1})$ and decompose $f = f_{1}+f_{2}$, where $f_{1}=(f,\phi_{1})\phi_{1}$. Since $\phi_{1}$ is a constant depending on $n$, we have 
\begin{equation*}
\norm{A_{\kappa}f_{1}}_{L^{2}(B_{1})} \le \abs{(f,\phi_{1})} \norm{A_{\kappa}\phi_{1}}_{L^{2}(B_{1})} \le C_{n} \norm{f}_{H^{-2m}(\mS^{n-1})}.
\end{equation*}
To estimate $A_{\kappa}f_{2}$ we define an auxiliary function 
\begin{equation*}
\tilde{f}_{2} := \sum_{\ell=2}^{\infty} \lambda_{\ell}^{-m}(f,\phi_{\ell})\phi_{\ell}
\end{equation*}
so that $(-\Delta_{\mS^{n-1}})^{m}\tilde{f}_{2} = f_{2}$. Integration by parts gives 
\begin{equation*}
(A_{\kappa}f_{2})(x) = \left( A_{\kappa} \left( (\Delta_{\mS^{n-1}})^{m} \tilde{f}_{2} \right) \right)(x) = \int_{\mS^{n-1}} (-\Delta_{\mS^{n-1}})^{m} (e^{\bfi\kappa x\cdot\omega}) \tilde{f}_{2}(\omega) \, \rmd S(\omega). 
\end{equation*}
Therefore using \Cref{lem:compute-coefficient} and the Weyl asymptotics in \eqref{eq:Weyl}, we have 
\begin{equation*}
\norm{A_{\kappa}f_{2}}_{L^{2}(B_{1})} \le C(Cm\kappa)^{2m} \norm{\tilde{f}}_{L^{2}(\mS^{n-1})} \le C(Cm\kappa)^{2m} \norm{f_{2}}_{H^{-2m}(\mS^{n-1})}. 
\end{equation*}
Combining the estimates for $A_{\kappa}f_{1}$ and $A_{\kappa}f_{2}$ proves the lemma. 
\end{proof}

We can now prove the second part of \Cref{thm:1}. 

\begin{proof}[Proof of \eqref{eq:unstable-region-Herglotz} in \Cref{thm:1}]
We use Courant's min-max principle 
\begin{equation*}
\sigma_{j}(A_{\kappa}) = \min_{X} \max_{\{f\perp X \,:\, \norm{f}_{L^{2}(\mS^{n-1})}=1\}} \norm{A_{\kappa}f}_{L^{2}(B_{1})}
\end{equation*}
where the minimum is over all subspaces $X$ of $L^{2}(\mS^{n-1})$ with $\dim\,(X)=j-1$. We let $X_{j-1} = {\rm span}\,\{\phi_{1},\cdots,\phi_{j-1}\}$ and observe that \Cref{lem:smoothing-Herglotz} implies 
\begin{equation*}
\norm{A_{\kappa}f}_{L^{2}(B_{1})} \le C(Cm\kappa)^{2m} j^{-\frac{2m}{n-1}} \norm{f}_{L^{2}(\mS^{n-1})} \quad \text{for all $f\perp X_{j-1}$.}
\end{equation*}
Thus choosing $X=X_{j}$ in the min-max principle yields 
\begin{equation*}
\sigma_{j}(A_{\kappa}) \le C\left(Cm\kappa j^{-\frac{1}{n-1}}\right)^{2m}.
\end{equation*}
Here $m\ge 0$ can be chosen freely. The function $h(t)=C\left(Ct\kappa j^{-\frac{1}{n-1}}\right)^{2t}$ over $t\ge 0$ has a global minimum at $t=t_{0}$ where $Ct_{0}\kappa j^{-\frac{1}{n-1}}=1/e$. If $j \ge (2Ce\kappa)^{n-1}$, then $t_{0}\ge 2$, and choosing $m = \lfloor t_{0} \rfloor \ge t_{0}/2$ yields 
\begin{equation*}
\sigma_{j}(A_{\kappa}) \le h(m) \le h(t_0/2) = C (2e)^{-t_0} \le C \exp \left( -c\kappa^{-1}j^{\frac{1}{n-1}} \right) ,
\end{equation*}
where $C>0$ and $c>0$ only depend on $n$. This proves \eqref{eq:unstable-region-Herglotz} in \Cref{thm:1}. 
\end{proof}

\section{\label{sec:Singular-scattering}Singular values of linearized scattering matrix}

For $\kappa>0$ , recall the linearized scattering matrix $F_{\kappa}$ given by \eqref{fkappa_bilinear}:
\begin{equation*}
(F_{\kappa}(h)f,g)_{L^{2}(\mS^{n-1})} = \kappa^{\frac{n-1}{2}} (h P_{\kappa}f,P_{\kappa}g)_{L^{2}(B_{1})}
\end{equation*}
where $f,g \in L^{2}(\mS^{n-1})$. Since $P_{\kappa} f, P_{\kappa} g \in C^{\infty}(\mR^n)$, from the right hand side we see that $F_{\kappa}(h)$ is well-defined as long as $h$ is a compactly supported distribution.\ Furthermore by the definition of $P_{\kappa}$ we see that
\begin{equation*}
(F_{\kappa}(h)f)(\theta) = \int_{\mS^{n-1}} K_{\kappa}[h](\theta,\omega) f(\omega) \, \rmd S(\omega)
\end{equation*}
where the Schwartz kernel of $F_{\kappa}(h)$ is given by 
\begin{equation} \label{kkappa_kernel}
K_{\kappa}[h](\theta,\omega) = \kappa^{\frac{n-1}{2}} \hat{h}(\kappa(\omega-\theta)).
\end{equation}
The Hilbert-Schmidt norm of $F_{\kappa}(h)$ is equal to the $L^{2}$-norm of $K_{\kappa}[h]$ over $\mS^{n-1}\times\mS^{n-1}$. By using the coarea formula, this norm can be expressed as follows.

\begin{lemma} \label{lem_sk}
For any compactly supported distribution $h$, one has 
\[
\norm{F_{\kappa}(h)}_{\rm HS}^2 = c_n \int_{B_{2\kappa}} \abs{\hat{h}(\xi)}^{2} |\xi|^{-1}(4 - \kappa^{-2} |\xi|^{2})^{\frac{n-3}{2}} \, \rmd \xi.
\]
\end{lemma}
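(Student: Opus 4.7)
The plan is to express $\norm{F_{\kappa}(h)}_{\rm HS}^{2}$ as the $L^{2}$-norm of the Schwartz kernel \eqref{kkappa_kernel} on $\mS^{n-1}\times\mS^{n-1}$, and push this integral forward to $\mR^{n}$ via the smooth map
\[
\Phi \colon \mS^{n-1}\times\mS^{n-1} \to \mR^{n}, \qquad \Phi(\omega,\theta) := \kappa(\omega-\theta),
\]
whose image lies in $\ol{B}_{2\kappa}$ since $|\omega-\theta|\le 2$. Concretely, $\norm{F_{\kappa}(h)}_{\rm HS}^{2} = \kappa^{n-1}\iint |\hat{h}(\kappa(\omega-\theta))|^{2}\,\rmd S(\omega)\,\rmd S(\theta)$, and since $\rmd S\otimes\rmd S$ is invariant under the diagonal action of $SO(n)$ its pushforward under $\Phi$ is a radial measure on $\mR^{n}$. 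The coarea formula expresses this density as $\mathrm{vol}_{n-2}(\Phi^{-1}(\xi))/J(\xi)$, where $J(\xi) = \sqrt{\det(D\Phi\,(D\Phi)^{*})}$, and the two ingredients can be computed separately.

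For the Jacobian $J$: at $(\omega,\theta)$ the tangent space is $\omega^{\perp}\oplus\theta^{\perp}$ with $D\Phi(X,Y)=\kappa(X-Y)$, so
\[
D\Phi\,(D\Phi)^{*} = \kappa^{2}\bigl(P_{\omega^{\perp}}+P_{\theta^{\perp}}\bigr) = \kappa^{2}\bigl(2I - \omega\omega^{T}-\theta\theta^{T}\bigr).
\]
The rank-two correction $\omega\omega^{T}+\theta\theta^{T}$ has nonzero eigenvalues $1\pm\omega\cdot\theta$ (with eigenvectors $\omega\pm\theta$) and vanishes on $\mathrm{span}(\omega,\theta)^{\perp}$, hence $J = \kappa^{n}\,2^{(n-2)/2}\,\sqrt{1-(\omega\cdot\theta)^{2}}$. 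Using $|\omega-\theta|^{2}=2(1-\omega\cdot\theta)$, the factor $\sqrt{1-(\omega\cdot\theta)^{2}}$ is rewritten as $\tfrac{|\xi|}{2\kappa^{2}}\sqrt{4\kappa^{2}-|\xi|^{2}}$, so $J$ is proportional to $\kappa^{n-1}|\xi|\sqrt{4-\kappa^{-2}|\xi|^{2}}$. For the fiber $\Phi^{-1}(\xi)$ with $\xi\in B_{2\kappa}\setminus\{0\}$: a pair satisfies $\omega = \theta+\xi/\kappa$, and $|\omega|=1$ gives the affine constraint $\theta\cdot\xi = -|\xi|^{2}/(2\kappa)$, so $\theta$ varies over an $(n-2)$-sphere of radius $\sqrt{1-|\xi|^{2}/(4\kappa^{2})}$, of $(n-2)$-volume $|\mS^{n-2}|\,(1-|\xi|^{2}/(4\kappa^{2}))^{(n-2)/2}$.

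Combining the fiber volume with $1/J$ and the outer prefactor $\kappa^{n-1}$ yields the claimed formula
\[
\norm{F_{\kappa}(h)}_{\rm HS}^{2} = c_{n}\int_{B_{2\kappa}} |\hat{h}(\xi)|^{2}\,|\xi|^{-1}\,(4-\kappa^{-2}|\xi|^{2})^{(n-3)/2}\,\rmd\xi,
\]
with $c_{n}$ an explicit combination of $|\mS^{n-2}|$ and powers of $2$ (one can sanity check: $n=3$ reduces to \eqref{fkappa_norm_first}, and for $n=2$ the integrable singularity at $|\xi|=2\kappa$ is consistent with $\mS^{0}$ being two points). The main point requiring care is the Jacobian computation, since one must correctly identify the two tangent hyperplanes, diagonalize the rank-two correction in the $2$-plane spanned by $\omega,\theta$, and convert the resulting expression into the single variable $|\xi|$; once this is in place the coarea formula does the rest and the final identity is a matter of bookkeeping.
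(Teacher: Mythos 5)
Your proof takes essentially the same route as the paper's: express the Hilbert--Schmidt norm as the $L^2$ norm of the kernel over $\mS^{n-1}\times\mS^{n-1}$, push forward to $\mR^n$ via the difference map and the coarea formula, and compute the Jacobian and fiber volume separately. The paper packages this as a standalone identity (\Cref{lem:coarea-general}) for the unscaled map $(\hat{z},\hat{x})\mapsto\hat{z}-\hat{x}$ and rescales by $\kappa$ afterwards; you fold $\kappa$ into the map directly, a cosmetic difference. Where you genuinely diverge is in the Jacobian: the paper constructs an explicit orthonormal basis of $(\ker\rmd\Phi)^\perp$ and invokes a matrix-determinant identity (\Cref{eq:matrix-identity}), whereas you compute $\sqrt{\det(\rmd\Phi(\rmd\Phi)^{*})}$ by reading off the eigenvalues of the rank-two matrix $\omega\omega^{\intercal}+\theta\theta^{\intercal}$ (namely $1\pm\omega\cdot\theta$ on $\omega\pm\theta$, zero on $\mathrm{span}(\omega,\theta)^{\perp}$) -- a more streamlined route to the same Jacobian, valid because $\abs{\det(\rmd\Phi|_{(\ker\rmd\Phi)^\perp})}^2 = \det(\rmd\Phi(\rmd\Phi)^{*})$ wherever $\rmd\Phi$ has full rank. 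One small imprecision in your write-up: the $(n-2)$-volume you record is that of the admissible $\theta$-set, not of the fiber $\Phi^{-1}(\xi)\subset\mS^{n-1}\times\mS^{n-1}$ itself; the parametrization $\theta\mapsto(\theta+\xi/\kappa,\theta)$ scales every tangent vector by $\sqrt{2}$, so the true fiber volume carries an extra factor $2^{(n-2)/2}$ (the paper tracks this in \eqref{eq:volume-form}). Since the lemma leaves $c_n$ unspecified this does not affect the conclusion, but it would matter if one wanted the constant in \eqref{fkappa_norm_first} to come out correctly.
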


We will prove \Cref{lem_sk} in the end of this section. We will now use it to establish  smoothing estimates for $F_{\kappa}$. In particular, these estimates imply that $F_{\kappa}$ gives rise to a compact operator $H^{-s}_{\ol{B}_1} \to {\rm HS}(L^2(\mathbb{S}^{n-1}))$ for any $s$.

\begin{lemma} \label{lem_fk_smoothing}
There is $C = C(n) > 0$ such that for any $\kappa \geq 1$ and any integer $m \geq 1$, one has 
\[
\norm{F_{\kappa}(h)}_{\rm HS} \leq C(Cm\kappa)^{2m} \kappa^{\alpha(n)} \norm{h}_{H^{-2m}}
\]
whenever $h \in H^{-2m}_{\ol{B}_1}$. Here $\alpha(2) = 1/2$  and $\alpha(n) = 0$ for $n \geq 3$.
\end{lemma}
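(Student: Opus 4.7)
The proof begins with \Cref{lem_sk}, which expresses
\[
\|F_{\kappa}(h)\|_{\rm HS}^2 = c_n \int_{B_{2\kappa}} |\hat{h}(\xi)|^2\, w(\xi)\, d\xi, \qquad w(\xi) := |\xi|^{-1}(4-\kappa^{-2}|\xi|^2)^{(n-3)/2}.
\]
My plan is to bound this weighted integral by $C(Cm\kappa)^{4m}\kappa^{2\alpha(n)}\|h\|_{H^{-2m}}^2$ and then take a square root. The key ingredient is a pointwise bound on $\hat{h}$: since $h$ has support in $\ol{B}_1$, one has $\hat{h}(\xi) = \langle h, \chi_m e^{-i\xi\cdot}\rangle$ for any cutoff $\chi_m \in C_c^\infty(\mR^n)$ with $\chi_m \equiv 1$ on $\ol{B}_1$, so duality together with Peetre's inequality gives $|\hat{h}(\xi)|^2 \le 4^m (1+|\xi|^2)^{2m} \|\chi_m\|_{H^{2m}}^2 \|h\|_{H^{-2m}}^2$. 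I would select $\chi_m$ depending on $m$, for instance $\chi_m = \mathbf{1}_{B_{3/2}} \ast \rho_{1/m}$ with $\rho_{1/m}$ a smooth mollifier at scale $1/m$, and prove $\|\chi_m\|_{H^{2m}}^2 \lesssim (Cm)^{4m}$ by splitting $\int(1+|\eta|^2)^{2m}|\widehat{\chi_m}(\eta)|^2\,d\eta$ into the ranges $|\eta|\le m$ (where $(1+|\eta|^2)^{2m}\lesssim (Cm)^{4m}$ and $\widehat{\chi_m}$ has bounded $L^2$-mass) and $|\eta|>m$ (where the Schwartz decay of $\hat\rho(\eta/m)$ controls the tail once $N\sim m$).

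Having obtained $|\hat{h}(\xi)|^2\lesssim (Cm)^{4m}(1+|\xi|^2)^{2m}\|h\|_{H^{-2m}}^2$, I would split $B_{2\kappa}$ into a near region $B_1$, an intermediate region $B_\kappa\setminus B_1$, and (only when $n=2$) an outer region $B_{2\kappa}\setminus B_\kappa$. On $B_1$ the factor $(4-\kappa^{-2}|\xi|^2)^{(n-3)/2}$ is bounded uniformly in $\kappa\ge 1$ and $\int_{B_1}|\xi|^{-1}d\xi$ is finite in dimension $n\ge 2$, so the pointwise bound yields a contribution of order $(Cm)^{4m}\|h\|_{H^{-2m}}^2$. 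On $B_{\kappa}\setminus B_1$ (and in fact on all of $B_{2\kappa}\setminus B_1$ when $n\ge 3$), both $|\xi|^{-1}\le 1$ and $(4-\kappa^{-2}|\xi|^2)^{(n-3)/2}$ are bounded, so the Plancherel-type identity $\int|\hat{h}|^2(1+|\xi|^2)^{-2m}d\xi\sim\|h\|_{H^{-2m}}^2$ combined with $\sup_{B_{2\kappa}}(1+|\xi|^2)^{2m}\lesssim\kappa^{4m}$ gives a contribution of size $(C\kappa)^{4m}\|h\|_{H^{-2m}}^2$.

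The main obstacle is the outer region when $n=2$, where $(4-\kappa^{-2}|\xi|^2)^{-1/2}$ is singular at $|\xi|=2\kappa$. Here I would apply the pointwise bound in the form $|\hat{h}|^2\lesssim(Cm\kappa)^{4m}\|h\|_{H^{-2m}}^2$ (valid throughout $B_{2\kappa}$) together with $|\xi|^{-1}\le\kappa^{-1}$ and the explicit evaluation $\int_{B_{2\kappa}\setminus B_\kappa}(4-\kappa^{-2}|\xi|^2)^{-1/2}d\xi = O(\kappa^2)$ via the substitution $u=|\xi|/\kappa$, which yields a contribution of size $(Cm\kappa)^{4m}\kappa\|h\|_{H^{-2m}}^2$ and exactly produces the claimed $\kappa^{2\alpha(2)}=\kappa$ factor after squaring. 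A secondary technical point is the $m$-dependent construction of $\chi_m$: a fixed cutoff $\chi\in C_c^\infty$ is at best Gevrey-$s$ for some $s>1$ and would only give $\|\chi\|_{H^{2m}}\lesssim (Cm)^{2ms}$, which is insufficient when $m$ is much larger than $\kappa$; this is why a mollification at scale $1/m$ is essential.
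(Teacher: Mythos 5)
Your overall strategy mirrors the paper's quite closely: both proofs start from \Cref{lem_sk}, both obtain a pointwise bound $\abs{\hat{h}(\xi)} \lesssim (Cm+C\abs{\xi})^{2m}\norm{h}_{H^{-2m}}$ via an $m$-dependent cutoff $\chi_m$ with $\chi_m = 1$ near $\ol{B}_1$, both split $B_{2\kappa}$ into a near region (where the pointwise bound is used) and a far region (where a Plancherel-type bound with the supremum of $\langle\xi\rangle^{4m}\abs{\xi}^{-1}$ is used), and both handle the $n=2$ singularity at $\abs{\xi}=2\kappa$ by the pointwise bound together with the $O(\kappa)$ evaluation of the weight integral. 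The only substantive difference is how the estimate $\norm{\chi_m}_{H^{2m}} \lesssim (Cm)^{2m}$ is obtained: the paper invokes H\"ormander's explicit construction with the pointwise derivative bounds $\abs{\partial^{\alpha}\chi_m} \leq C(Cm)^{\abs{\alpha}}$ for $\abs{\alpha} \leq 2m$, while you propose a Fourier-side argument. That difference, however, is where your proof breaks down.

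The gap is in your cutoff construction. With $\chi_m = \mathbf{1}_{B_{3/2}} * \rho_{1/m}$ for a \emph{fixed} mollifier $\rho$, one has $\widehat{\chi_m}(\eta) = \widehat{\mathbf{1}_{B_{3/2}}}(\eta)\,\hat{\rho}(\eta/m)$, and the tail $\int_{\abs{\eta}>m}(1+\abs{\eta}^2)^{2m}\abs{\widehat{\chi_m}(\eta)}^2\,d\eta$ is \emph{not} $\lesssim (Cm)^{4m}$. Substituting $\zeta = \eta/m$ and bounding $(1+m^2\abs{\zeta}^2)^{2m} \leq (2m^2)^{2m}\abs{\zeta}^{4m}$ for $\abs{\zeta}>1$, the tail is controlled by $(Cm)^{4m}\int_{\abs{\zeta}>1}\abs{\zeta}^{4m}\abs{\hat{\rho}(\zeta)}^2\,d\zeta \cdot m^{-1}$, and for a fixed compactly supported $\rho$ (Gevrey-$s$ for some $s>1$) the remaining integral grows like $((2m)!)^{2s} \sim (Cm)^{4ms}$, producing $(Cm)^{4m(1+s)}$. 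Equivalently, writing ``Schwartz decay with $N\sim m$'' hides the constant $C_N = \sup_{\zeta}(1+\abs{\zeta})^N\abs{\hat{\rho}(\zeta)}$, which itself grows like $(CN)^{Ns}$ for Gevrey-$s$ functions. So the single mollification at scale $1/m$ merely shifts the Gevrey obstruction; it does not remove it. What is actually needed is (a version of) H\"ormander's construction: take $2m$ \emph{iterated} convolutions, $\chi_m = \mathbf{1}_{B_{3/2}} * \rho_{a_1} * \cdots * \rho_{a_{2m}}$ with $\sum a_j < 1/2$ and $a_j \sim 1/m$, so that on the Fourier side one has a factor $\bigl(\hat{\rho}(c\eta/m)\bigr)^{2m}$ rather than a single $\hat{\rho}(\eta/m)$. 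Then a fixed polynomial decay $\abs{\hat{\rho}(\zeta)}\leq C(1+\abs{\zeta})^{-2}$ raised to the $2m$-th power beats $(1+\abs{\eta}^2)^{2m}$ uniformly, yielding the correct $(Cm)^{4m}$. The remaining parts of your argument (the Peetre duality step, the three-region split, the $\kappa^{2\alpha(2)}=\kappa$ bookkeeping in $n=2$) are correct, so once the cutoff construction is repaired the proof goes through.
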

\begin{proof}
In the following, $C$ denotes a positive constant only depending on $n$ that may change from line to line. We first let $n \geq 3$. From \Cref{lem_sk} we obtain 
\begin{align*}
\norm{F_{\kappa}(h)}_{\rm HS}^2 &\leq C \int_{B_{2\kappa}} |\hat{h}(\xi)|^2 |\xi|^{-1} \,d\xi = C(I_1+I_2)
\end{align*}
where 
\[
I_1 = \int_{B_{1}} |\hat{h}(\xi)|^2 |\xi|^{-1} \,d\xi, \qquad I_2 = \int_{B_{2\kappa} \setminus B_1} |\hat{h}(\xi)|^2 |\xi|^{-1} \,d\xi.
\]
\label{fkappa_est_first} 

To estimate $I_1$, we will use the compact support condition for $h$ to bound $|\hat{h}(\xi)|$. As in \cite[proof of Proposition 8.4.2]{HormanderVol1}, we choose a special cutoff function $\chi = \chi_m \in C^{\infty}_c(B_2)$ with $\chi=1$ near $\ol{B}_1$ and 
\begin{equation} \label{special_cutoff_estimate}
\abs{\p^{\alpha} \chi_m} \leq C(Cm)^{\abs{\alpha}}, \qquad \abs{\alpha} \leq 2m.
\end{equation}
Since $h = \chi h$, we have (up to powers of $2\pi$ that will be included in the constant $C(n)$ later) 
\[
\hat{h}(\xi) = \int \hat{h}(\eta) \hat{\chi}(\xi-\eta) \,d\eta = (\br{\,\cdot\,}^{-2m} \hat{h}, \br{\,\cdot\,}^{2m} \ol{\hat{\chi}(\xi-\,\cdot\,)})_{L^2}.
\]
It follows that 
\begin{equation} \label{hhat_est_first}
\abs{\hat{h}(\xi)} \leq \norm{h}_{H^{-2m}} \norm{\br{\,\cdot\,}^{2m} \hat{\chi}(\xi+\,\cdot\,)}_{L^2}.
\end{equation}
Next we note that 
\begin{align*}
 &\norm{\br{\,\cdot\,}^{2m} \hat{\chi}(\xi+\,\cdot\,)}_{L^2} = \norm{\br{D}^{2m}(e^{-i \langle \,\cdot\,,\xi \rangle} \chi)}_{L^2} \\
 &= \norm{(1+(D - \xi)^2)^m \chi}_{L^2}.
\end{align*}
We expand the last expression as 
\begin{align*}
(1+(D - \xi)^2)^m \chi = \sum_{j=0}^m \binom{m}{j} ((D-\xi)^{2})^{j} \chi.
\end{align*}
Now \eqref{special_cutoff_estimate} gives for $0 \leq \ell \leq m$ that 
\begin{align*}
 &\abs{((D-\xi)^2)^{\ell} \chi} = \abs{\sum_{j_1, \ldots, j_{\ell}=1}^{n} (D_{j_1}-\xi_{j_1})^2 \cdots (D_{j_{\ell}}-\xi_{j_{\ell}})^2 \chi} \\
 &\leq \sum_{j_1, \ldots, j_{\ell}=1}^{n} (|D_{j_1}^2 \cdots D_{j_{\ell}}^2 \chi| + \binom{2\ell}{1} |D_{j_1}^2 \cdots D_{j_{\ell-1}}^2 D_{j_{\ell}}\chi| \cdot \abs{\xi} + \ldots + |\chi| \cdot \abs{\xi}^{2m}) \\
&\leq n^{\ell} (C(Cm)^{2\ell} + \binom{2\ell}{1} C(Cm)^{2\ell-1} \abs{\xi} + \ldots + C(Cm)^0 \abs{\xi}^{2m}) \mathbf{1}_{B_2} \\
 &\leq n^{\ell} C (Cm+\abs{\xi})^{2\ell} \leq C(Cm + C\abs{\xi})^{2\ell} \mathbf{1}_{B_2}
\end{align*}
where $\mathbf{1}_{B_2}$ is the indicator function of $B_2$. Combining the last two expressions gives 
\[
\norm{(1+(D - \xi)^2)^m \chi}_{L^2} \leq \sum_{j=0}^m \binom{m}{j} C(Cm + C\abs{\xi})^{2j} \leq C(Cm + C\abs{\xi})^{2m}.
\]
From \eqref{hhat_est_first} we see that 
\begin{equation} \label{hhatest}
\abs{\hat{h}(\xi)} \leq C(Cm + C\abs{\xi})^{2m} \norm{h}_{H^{-2m}}.
\end{equation}
Then $I_1$ can be estimated for $m \geq 1$ by 
\[
I_1 \leq C \sup_{|\xi| \leq 1} \,|\hat{h}(\xi)|^2 \leq C (Cm)^{4m} \norm{h}_{H^{-2m}}^2.
\]

We proceed to estimate $I_2$ by
\[
I_2 \leq (\sup_{1 \leq \abs{\xi} \leq 2 \kappa} \br{\xi}^{4m} \abs{\xi}^{-1}) \norm{h}_{H^{-2m}}^2 \leq C (C\kappa)^{4m} \norm{h}_{H^{-2m}}^2
\]
Combining the estimates for $I_1$ and $I_2$ yields for $m, \kappa \geq 1$ that 
\[
\norm{F_{\kappa}(h)}_{\rm HS} \leq C(Cm\kappa)^{2m} \norm{h}_{H^{-2m}}.
\]
This is the required estimate for $n \geq 3$.

For $n=2$ we observe that 
\begin{align*} 
\norm{F_{\kappa}(h)}_{\rm HS}^2 &\leq C \left( \sup_{B_{2\kappa}} \,\abs{\hat{h}(\xi)}^2 \right) \int_{B_{2\kappa}} |\xi|^{-1} (4-\kappa^{-2} |\xi|^2)^{-\frac{1}{2}}\, \rmd \xi \\
 &\leq C \left( \sup_{B_{2\kappa}} \,\abs{\hat{h}(\xi)}^2 \right) \kappa 
\end{align*}
upon changing variables $\xi = \kappa \eta$ in the integral. Then using \eqref{hhatest} for $|\xi| \leq 2 \kappa$ gives that for $m, \kappa \geq 1$ 
\[
\norm{F_{\kappa}(h)}_{\rm HS} \leq C(Cm\kappa)^{2m} \kappa^{1/2} \norm{h}_{H^{-2m}}. \qedhere
\]
\end{proof}

We will now combine \Cref{lem_sk,lem_fk_smoothing} to prove the following refined version of \Cref{thm:2}.

\begin{theorem} \label{thm_scattering_refined}
Let $n \ge 2$, $0 \leq s \leq 1/2$ and  $\kappa \ge 1$. The singular values of $F_{\kappa}: H^{-s}_{\ol{B}_1} \to {\rm HS}(L^2(\mathbb{S}^{n-1}))$ satisfy 
\begin{align}
    \sigma_j &\gtrsim j^{-\frac{2s-1}{2n}}, &\qquad j &\lesssim \kappa^{n}, \label{fk_sest1_refined}\\
    \sigma_j &\lesssim \kappa^{\alpha(n)} j^{\frac{s}{n}} \exp(-c j^{\frac{1}{2n}}/\kappa^{1/2}), &\qquad j &\gtrsim \kappa^{n}, \label{fk_sest2_refined}
\end{align}
where the constant $c > 0$ and the implied constants are independent of $\kappa$ and $j$, and $\alpha(n)$ is as in \Cref{lem_fk_smoothing}.
\end{theorem}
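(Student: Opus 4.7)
The proof of \Cref{thm_scattering_refined} follows the same template as the proof of \Cref{thm:1}: it combines the exact Hilbert--Schmidt norm identity from \Cref{lem_sk} with the Sobolev smoothing estimate of \Cref{lem_fk_smoothing} via the Courant max-min and min-max principles. The two bounds are treated separately.

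For the lower bound in the stable region $j \lesssim \kappa^n$, the plan is to apply the max-min characterization
\begin{equation*}
\sigma_j \;=\; \max_{\dim X = j}\ \min_{h \in X,\ \norm{h}_{H^{-s}} = 1}\ \norm{F_\kappa(h)}_{\rm HS}.
\end{equation*}
Setting $R := c_0 j^{1/n} \le \kappa$ for a small constant $c_0>0$, the natural candidate is $X = \mathrm{span}\{\psi_1,\dots,\psi_j\}$, where $(\psi_k)$ are the eigenfunctions of the Dirichlet Laplacian on $B_1$ (extended by zero to $\mR^n$) with eigenvalues $\mu_k \sim k^{2/n}$, so that $\mu_k \lesssim R^2$ for $k \le j$. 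For $h \in X$ the Fourier transform $\hat h$ is morally concentrated in $B_R \subset B_\kappa$, and on $B_R$ the weight in \Cref{lem_sk} obeys $|\xi|^{-1}(4-\kappa^{-2}|\xi|^2)^{(n-3)/2} \gtrsim |\xi|^{-1}$. An Agmon--H\"ormander type quantitative localization argument in the spirit of \Cref{lem:quantative-inversion} should then yield $\norm{F_\kappa(h)}_{\rm HS}^2 \gtrsim \int_{B_R} |\hat h(\xi)|^2 |\xi|^{-1}\,\rmd\xi$; comparing with the $H^{-s}$ norm, for which $\br{\xi}^{-2s} \lesssim R^{1-2s}|\xi|^{-1}$ on the relevant Fourier support gives $\norm{h}_{H^{-s}}^2 \lesssim R^{1-2s}\int |\hat h|^2 |\xi|^{-1}\,\rmd\xi$, one arrives at the ratio $R^{2s-1} \sim j^{(2s-1)/n}$, which is the claimed lower bound for $\sigma_j^2$. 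The anticipated main obstacle is that $\hat h$ is actually entire analytic (since $h$ has compact support), so the frequency localization in $B_R$ is only approximate; handling the Schwartz-tail contributions rigorously, as in the $I_1,I_2,I_3$ decomposition of the proof of \Cref{lem:quantative-inversion}, is the delicate step and would likely require combining a smooth Fourier cutoff with the $H^1$-smoothness of $h \in X$ provided by the Weyl bound.

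For the upper bound in the unstable region $j \gtrsim \kappa^n$, the plan is to apply the dual min-max principle
\begin{equation*}
\sigma_j \;=\; \min_{\dim Y = j-1}\ \sup_{h \perp_{H^{-s}} Y,\ \norm{h}_{H^{-s}}=1}\ \norm{F_\kappa(h)}_{\rm HS},
\end{equation*}
with $Y_{j-1} = \mathrm{span}\{\psi_1,\dots,\psi_{j-1}\}$ spanned by the same Dirichlet eigenfunctions, which form an orthogonal family in $H^{-s}_{\ol{B}_1}$ with respect to the inner product induced by spectral calculus for $-\Delta_D+1$. For $h \perp_{H^{-s}} Y_{j-1}$, writing $h = \sum_{k \ge j} c_k \psi_k$ and invoking the Weyl asymptotics $\mu_k \sim k^{2/n}$ yields the Sobolev comparison $\norm{h}_{H^{-2m}} \lesssim j^{-(2m-s)/n}\,\norm{h}_{H^{-s}}$ whenever $2m \ge s$. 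Feeding this into \Cref{lem_fk_smoothing} gives
\begin{equation*}
\norm{F_\kappa(h)}_{\rm HS} \;\lesssim\; \kappa^{\alpha(n)}\, j^{s/n}\, \bigl(Cm\kappa\, j^{-1/n}\bigr)^{2m}\, \norm{h}_{H^{-s}}.
\end{equation*}
Minimizing $(Cm\kappa j^{-1/n})^{2m}$ over positive integers $m$ proceeds exactly as in the proof of \eqref{eq:unstable-region-Herglotz}: the critical choice is $m_0 \sim j^{1/n}/(eC\kappa)$, which satisfies $m_0 \ge 2$ once $j \gtrsim \kappa^n$, producing decay of order $\exp(-c\,j^{1/n}/\kappa)$. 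Since $j^{1/n}/\kappa \ge \kappa^{-1/2} j^{1/(2n)}$ precisely when $j \ge \kappa^n$, this decay rate is at least as strong as the claimed $\exp(-c \kappa^{-1/2} j^{1/(2n)})$, so \eqref{fk_sest2_refined} follows with the prefactor $\kappa^{\alpha(n)} j^{s/n}$ inherited directly from \Cref{lem_fk_smoothing}.
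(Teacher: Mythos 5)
Your high-level strategy (Courant max-min for the stable region, min-max plus the smoothing estimate of \Cref{lem_fk_smoothing} for the unstable region) matches the paper, but both halves contain genuine gaps at exactly the points where the technical work lies.

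For the upper bound, the crux is your claimed comparison $\norm{h}_{H^{-2m}} \lesssim j^{-(2m-s)/n}\norm{h}_{H^{-s}}$ for $h$ orthogonal to the first $j-1$ Dirichlet eigenfunctions, with an implied constant that is (at worst) of the form $C^{2m}$. This does not follow from the Weyl asymptotics alone: the expansion in Dirichlet eigenfunctions controls the \emph{Dirichlet-spectral} norms $\bigl(\sum_k (1+\mu_k)^{-2m}\abs{c_k}^2\bigr)^{1/2}$, but \Cref{lem_fk_smoothing} requires the global norm $\norm{h}_{H^{-2m}(\mR^n)}$, i.e.\ duality against all of $H^{2m}(\mR^n)$. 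Transferring between the two is where the $m$-dependence enters: testing against $g \in H^{2m}(\mR^n)$ and integrating by parts produces boundary terms involving $\partial_\nu \psi_k \neq 0$ (equivalently, the cutoff/extension needed for the comparison costs derivatives of the cutoff up to order $2m$), so the constant is not uniform in $m$. The paper's \Cref{lemma_singular_embedding} quantifies precisely this loss as a factor $(C_n m)^{2m}$ in \eqref{is_est2}, obtained by factoring the embedding through the torus and paying $C(Cm)^{2m}$ for multiplication by the H\"ormander cutoff $\chi_m$. That extra $(Cm)^{2m}$ changes the optimization in $m$: one gets $(Cm\kappa^{1/2} j^{-1/(2n)})^{4m}$ rather than your $(Cm\kappa j^{-1/n})^{2m}$, and hence the exponent $\exp(-c\kappa^{-1/2}j^{1/(2n)})$ of \eqref{fk_sest2_refined} rather than the stronger $\exp(-c\kappa^{-1}j^{1/n})$ you claim. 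The fact that your argument outputs a bound strictly stronger than the theorem (which the paper explicitly suspects is itself not optimal) is a symptom of this unjustified step, not a bonus.

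For the lower bound, your plan hinges on a quantitative frequency-localization statement for $h$ in the span of low Dirichlet eigenfunctions, which you yourself identify as "the delicate step" and do not carry out; as stated it is an outline, not a proof, and the comparison of $\norm{h}_{H^{-s}}$ with $\int\abs{\hat h}^2\abs{\xi}^{-1}$ on the "relevant Fourier support" presupposes the localization you have not established. The paper avoids this entirely by a simpler splitting: for any $h \in H^{-s}_{\ol{B}_1}$, write $\int_{B_\kappa}\abs{\hat h}^2\abs{\xi}^{-1} = \int_{\mR^n} - \int_{\mR^n\setminus B_\kappa}$, bound the tail by $\kappa^{2s-1}\norm{h}_{H^{-s}}^2$ using only $\abs{\xi}^{-1}\le \kappa^{2s-1}\br{\xi}^{-2s}$ for $\abs{\xi}\ge\kappa$, bound the full integral below by $c\norm{h}_{H^{-1/2}}^2$, and then choose the test subspace spanned by singular vectors of the embedding $i_s: H^{-s}_{\ol{B}_1}\to H^{-1/2}_{\ol{B}_1}$, whose singular values $\sim k^{-(1/2-s)/n}$ come from the entropy-number results in \cite{KRS21InstabilityMechanism} (\Cref{lemma_singular_embedding}); the case $s=1/2$ is handled by the same argument with the embedding $L^2_{\ol{B}_1}\to H^{-1/2}_{\ol{B}_1}$. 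If you want to salvage your route, you would need to prove the localization lemma you sketch and also justify that the Dirichlet-spectral $H^{-s}$ norm is equivalent to the $H^{-s}(\mR^n)$ norm on $H^{-s}_{\ol{B}_1}$ (delicate at $s=1/2$); the paper's argument needs neither.
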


For the proof we will need some facts on the singular values of embeddings between Sobolev spaces.

\begin{lemma} \label{lemma_singular_embedding}
If $s_1, s_2 \in \mR$ with $s_1 > s_2$, one has 
\begin{equation} \label{is_est}
\sigma_k(i: H^{s_1}_{\ol{B}_1} \to H^{s_2}_{\ol{B}_1}) \sim k^{-\frac{s_1-s_2}{n}}
\end{equation}
with implicit constants depending on $n$, $s_1$ and $s_2$. Moreover, if $s \in \mR$ and $m \geq 1$ is an integer with $s < 2m$, then 
\begin{equation} \label{is_est2}
\sigma_k(i: H^{-s}_{\ol{B}_1} \to H^{-2m}_{\ol{B}_1}) \leq C_{n,s}(C_n m)^{2m} k^{-\frac{2m-s}{n}}
\end{equation}
for some constants $C_n, C_{n,s} > 0$.
\end{lemma}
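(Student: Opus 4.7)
Plan. The first estimate \eqref{is_est} is a classical Weyl-type asymptotic for Sobolev embeddings of compactly supported distributions (Birman--Solomyak theory). I would prove it for $s_1, s_2 \ge 0$ by comparing $H^{s_i}_{\ol{B}_1}$ with $H^{s_i}_0(B_2)$ via bounded extension and restriction operators, and then diagonalizing the inclusion $H^{s_1}_0(B_2)\hookrightarrow H^{s_2}_0(B_2)$ in the eigenbasis of the Dirichlet Laplacian on $B_2$ (whose eigenvalues satisfy Weyl's law $\mu_k \sim k^{2/n}$); this yields $\sigma_k \sim \mu_k^{-(s_1-s_2)/2} \sim k^{-(s_1-s_2)/n}$. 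The general case $s_1, s_2 \in \mR$ follows from the nonnegative case by Hilbert space duality.

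For the refined bound \eqref{is_est2} the plan is to execute the above proof for $(s_1, s_2) = (-s, -2m)$ while tracking the dependence of every constant on $m$. Following the construction in the proof of \Cref{lem_fk_smoothing}, I would use the cutoff $\chi_m \in C^\infty_c(B_2)$ with $\chi_m = 1$ near $\ol{B}_1$ and $|\partial^\alpha \chi_m| \le C(Cm)^{|\alpha|}$ for $|\alpha| \le 2m$. The Leibniz rule then gives $\|M_{\chi_m}\|_{H^{2m}(\mR^n) \to H^{2m}(\mR^n)} \le C(Cm)^{2m}$, and by Hilbert space duality the same bound holds for $M_{\chi_m}$ acting on $H^{-2m}(\mR^n)$. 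For $h \in H^{-s}_{\ol{B}_1}$ the identity $h = \chi_m h$ allows one to rewrite the $H^{-2m}$ pairing of $h$ against a test function $\phi \in H^{2m}(\mR^n)$ as the pairing against $\chi_m \phi \in H^{2m}_{\ol{B}_2}$, picking up the factor $C(Cm)^{2m}$. This dominates $\|h\|_{H^{-2m}(\mR^n)}$ by the norm of $h$ as a functional on $H^{2m}_{\ol{B}_2}$, which by standard duality corresponds to an $H^{-2m}$-type norm on the fixed domain $B_2$ that admits a spectral diagonalization via the Dirichlet eigenbasis on $B_2$; combined with the analogous lower comparison for the $H^{-s}$ norm, this yields the Weyl rate $k^{-(2m-s)/n}$.

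The main obstacle is the bookkeeping required to ensure that the only super-polynomial factor in $m$ is the $(C_nm)^{2m}$ coming from the multiplication by $\chi_m$, while the remaining constants (from extensions, restrictions, and the Weyl comparison on the fixed domain $B_2$) depend only polynomially on $m$ and are absorbed into $C_{n,s}(C_nm)^{2m}$. In particular, the identifications of the duals of $H^{-s}_{\ol{B}_1}$ and $H^{-2m}_{\ol{B}_1}$ with suitable Sobolev-type restriction spaces on $\ol{B}_1$ must be set up consistently with both norms simultaneously diagonalized by the same Dirichlet eigenbasis, and one must verify that the Weyl-type constants on $B_2$ indeed grow at most polynomially in $m$.
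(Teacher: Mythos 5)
Your plan for \eqref{is_est} (reduce to the inclusion of Sobolev spaces on a fixed domain, diagonalize, use Weyl's law, then dualize) is a reasonable route, although the paper simply cites entropy-number results from \cite{KRS21InstabilityMechanism} and converts via \cite[Lemma 3.9]{KRS21InstabilityMechanism}; both are fine.

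For \eqref{is_est2}, you have correctly identified the key mechanism --- factorizing the inclusion through a ``good'' ambient space, and using the H\"ormander-type cutoff $\chi_m$ (with $|\partial^\alpha\chi_m|\le C(Cm)^{|\alpha|}$) so that the multiplication operator $M_{\chi_m}$ on $H^{\pm 2m}$ has norm $\lesssim (Cm)^{2m}$, which is the only super-polynomial $m$-dependence. However, there is a genuine gap in the implementation you sketch. You propose to diagonalize the inclusion on $B_2$ in the \emph{Dirichlet eigenbasis of $-\Delta$ on $B_2$}, claiming that the $H^{2m}_{\ol{B}_2}$ norm (equivalently, $H^{-2m}$ by duality) ``admits a spectral diagonalization'' in that basis. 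This is false for $m\ge 1$: the Dirichlet eigenbasis diagonalizes the powers $(-\Delta_D)^m$, whose form domain consists of functions vanishing on $\partial B_2$ together with $\Delta u,\dots,\Delta^{m-1}u$, and this is \emph{not} the same space as $H^{2m}_0(B_2)=H^{2m}_{\ol{B}_2}$, which requires vanishing of $u$ and all normal derivatives up to order $2m-1$. The two norms are not equivalent with $m$-independent constants, and any attempt to compare them will produce additional $m$-dependent factors that you would need to track and that can interfere with the target bound. The paper avoids this entirely by factorizing through a \emph{closed} manifold, the torus $\mathbb{T}^n$: there the Sobolev norms of every order are genuinely given by the eigenbasis of $-\Delta$, with no boundary terms, and the operators $S$ (periodic extension with a fixed cutoff $\rho$) and $T$ (multiplication by $\chi_m$) contribute only $C_{n,s}$ and $C(Cm)^{2m}$ respectively, giving $\sigma_k(i)\le \|T\|\,\sigma_k(j)\,\|S\|$ with $\sigma_k(j)=(1+\mu_k)^{-m+s/2}\le C_n^{2m-s}k^{-(2m-s)/n}$. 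To repair your argument you should replace $B_2$ with Dirichlet boundary conditions by a boundaryless model space such as $\mathbb{T}^n$.
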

\begin{proof}
By \cite[Proposition 3.11]{KRS21InstabilityMechanism} the entropy numbers of $i: H^{s_1}_{\ol{B}_1} \to H^{s_2}_{\ol{B}_1}$ satisfy $e_k(i) \sim k^{-\frac{s_1-s_2}{n}}$. Then \cite[Lemma 3.9]{KRS21InstabilityMechanism} gives \eqref{is_est}. 

Let us next consider $i: H^{-s}_{\ol{B}_1} \to H^{-2m}_{\ol{B}_1}$. Let $\mathbb{T}^n = [-\pi,\pi]^n$ be the torus with opposite sides identified, and let $( \psi_l )_{l=1}^{\infty}$ be an orthonormal basis of $L^2(\mathbb{T}^n)$ consisting of eigenfunctions of $-\Delta$ with eigenvalues $0 = \mu_1 < \mu_2 \leq \ldots$. We write 
\[
i = T \circ j \circ S.
\]
Here $S$ is the periodic extension operator 
\[
S: H^{-s}_{\ol{B}_1} \to H^{-s}(\mathbb{T}^n), \ (Sf, \psi)_{\mathbb{T}^n} = (f, \rho \psi)_{\mR^n}, \qquad \psi \in H^s(\mathbb{T}^n)
\]
where $\rho \in C^{\infty}_c(B_2)$ satisfies $\rho = 1$ near $\ol{B}_1$, $j$ is the inclusion 
\[
j: H^{-s}(\mathbb{T}^n) \to H^{-2m}(\mathbb{T}^n),
\]
and $T$ is given by 
\[
T: H^{-2m}(\mathbb{T}^n) \to H^{-2m}(\mR^n), \ Tf = \chi f
\]
where as in \cite[proof of Proposition 8.4.2]{HormanderVol1}, $\chi = \chi_m \in C^{\infty}_c(\mR^n)$ is a cutoff function with $\chi=1$ near $\ol{B}_1$, $\supp(\chi) \subset B_2$, and 
\begin{equation} \label{special_cutoff_estimate2}
\abs{\p^{\alpha} \chi_m} \leq C(Cm)^{\abs{\alpha}}, \qquad \abs{\alpha} \leq 2m.
\end{equation}

We have 
\[
\sigma_k(i) \leq \norm{T} \sigma_k(j) \norm{S}.
\]
Now as in \cite{KRS21InstabilityMechanism} one has 
\[
(j^* j(u), u)_{H^{-s}(\mathbb{T}^n)} = \norm{u}_{H^{-2m}(\mathbb{T}^n)}^2 = \sum (1+\mu_l)^{-2m} |(u,\psi_l)_{\mathbb{T}^n}|^2 = (Du,u)_{H^{-s}(\mathbb{T}^n)}
\]
where $D$ is the diagonal operator $Du = \sum (1+\mu_l)^{-2m+s} (u, \psi_l)_{\mathbb{T}^n} \psi_l$. Thus $j^* j = D$, and it follows from Weyl asymptotics that 
\[
\sigma_k(j) = (1+\mu_k)^{-m+\frac{s}{2}} \leq C_n^{2m-s} k^{-\frac{2m-s}{n}}.
\]
We also have 
\[
\norm{Sf}_{H^{-s}(\mathbb{T}^n)} \leq \sup_{\norm{\psi}_{H^s(\mathbb{T}^n)}=1} \norm{f}_{H^{-s}} \norm{\rho \psi}_{H^s} \leq C_{n,s} \norm{f}_{H^{-s}}.
\]
Finally, we may use \eqref{special_cutoff_estimate2} to compute 
\[
\norm{\chi g}_{H^{2m}} \leq C(Cm)^{2m} \norm{g}_{H^{2m}}.
\]
By duality this gives for $f \in C^{\infty}_c(B_1)$ that 
\[
\norm{Tf}_{H^{-2m}(\mR^n)} = \sup \frac{(f,\chi g)_{\mathbb{T}^n}}{\norm{g}_{H^{2m}(\mR^n)}} \leq C(Cm)^{2m} \norm{f}_{H^{-2m}(\mathbb{T}^n)}.
\]
Combining these estimates gives \eqref{is_est2}.
\end{proof}

\begin{proof}[Proof of \Cref{thm_scattering_refined}]
We begin by proving \eqref{fk_sest1_refined}. Let first $0 \leq s < 1/2$. By \Cref{lem_sk}, for any $h \in H^{-s}_{\ol{B}_1}$ we have 
\begin{align*}
\norm{F_{\kappa}(h)}_{\rm HS}^2 &\geq c_n \int_{B_{\kappa}} \abs{\hat{h}(\xi)}^2 \abs{\xi}^{-1} \,d\xi \\
 &= c_n \left( \int_{\mR^n} \abs{\hat{h}(\xi)}^2 \abs{\xi}^{-1} \,d\xi - \int_{\mR^n \setminus B_{\kappa}} \abs{\hat{h}(\xi)}^2 \abs{\xi}^{-1} \,d\xi \right) \\
 &\geq c_n \norm{h}_{H^{-1/2}}^2 - C_n \kappa^{2s-1} \norm{h}_{H^{-s}}^2.
\end{align*}
Then the Courant max-min principle gives
\begin{align*}
\sigma_j(F_{\kappa}: H^{-s}_{\ol{B}_1} \to {\rm HS})^2 &= \max_X \min_{h \in X, \norm{h}_{H^{-s}}=1} \norm{F_{\kappa}(h)}_{\rm HS}^2 \\
&\geq \max_X \min_{h \in X, \norm{h}_{H^{-s}}=1} (c_n \norm{h}_{H^{-1/2}}^2 - C_n \kappa^{2s-1} \norm{h}_{H^{-s}}^2),
\end{align*}
where the maximum is over all subspaces $X$ of $H^{-s}_{\ol{B}_1}$ with $\dim(X) = j$. Let $( \psi_l )_{l=1}^{\infty}$ be an orthonormal basis of $H^{-s}_{\ol{B}_1}$ consisting  of singular vectors of $i_s: H^{-s}_{\ol{B}_1} \to H^{-1/2}_{\ol{B}_1}$. By \eqref{is_est}, if $X = \mathrm{span}\{\psi_1, \ldots, \psi_j\}$, then any $h \in X$ satisfies  
\[
\norm{h}_{H^{-1/2}}^2 = \norm{i_s(h)}_{H^{-1/2}}^2 \geq \sigma_j(i_s)^2 \norm{h}_{H^{-s}}^2 \geq \tilde{c}_{n,s} j^{\frac{2s-1}{n}} \norm{h}_{H^{-s}}^2.
\]
Thus when $c_n \tilde{c}_{n,s} j^{\frac{2s-1}{n}} \geq C_n \kappa^{2s-1}/2$, i.e.\ $j \leq c \kappa^n$ with $c$ small enough, we obtain 
\[
\sigma_j(F_{\kappa}: H^{-s}_{\ol{B}_1} \to {\rm HS})^2 \gtrsim j^{\frac{2s-1}{n}}.
\]
This proves \eqref{fk_sest1_refined} for $0 \leq s < 1/2$.

To prove \eqref{fk_sest1_refined} also for $s=1/2$, we use the above argument to conclude that 
\begin{align*}
\sigma_j(F_{\kappa}: H^{-1/2}_{\ol{B}_1} \to {\rm HS})^2 
&\geq \max_X \min_{h \in X, \norm{h}_{H^{-1/2}}=1} (c_n \norm{h}_{H^{-1/2}}^2 - C_n \kappa^{-1} \norm{h}_{L^2}^2),
\end{align*}
where the maximum is over all subspaces $X$ of $H^{-1/2}_{\ol{B}_1}$ with $\dim(X) = j$. We then choose $X = \mathrm{span}\{\psi_1, \ldots, \psi_j\}$ where $( \psi_l )$ is an orthonormal basis of $L^2_{\ol{B}_1}$ consisting of singular vectors of $i: L^2_{\ol{B}_1} \to H^{-1/2}_{\ol{B}_1}$. For $h \in X$, \eqref{is_est} yields 
\begin{align*}
\norm{h}_{L^2}^2 &= \sum_{l=1}^j |(h,\psi_l)_{L^2}|^2, \\
\norm{h}_{H^{-1/2}}^2 &= \norm{i(h)}_{H^{-1/2}}^2 \gtrsim j^{-1/n} \norm{h}_{L^2}^2.
\end{align*}
Then for $j \leq c \kappa^n$ with $c > 0$ sufficiently small we have $\sigma_j(F_{\kappa}: H^{-1/2}_{\ol{B}_1} \to {\rm HS}) \gtrsim 1$, which proves \eqref{fk_sest1_refined} also for $s=1/2$.

We proceed to the proof of \eqref{fk_sest2_refined}. By Courant's min-max principle 
\[
\sigma_j(F_{\kappa}) = \min_{S} \max_{\begin{subarray}{c}h \perp S \\ \|h\|_{H^{-s}_{\ol{B}_1}}=1\end{subarray}} \| F_{\kappa} h \|_{\rm HS} \label{eq:Courant-min-max-fk}
\]
where the minimum is over all subspaces $S$ of $H^{-s}_{\ol{B}_1}$ with $\dim(S) = j-1$. We consider the embedding $i: H^{-s}_{\ol{B}_1} \to H^{-2m}_{\ol{B}_1}$ and let $(\psi_l)$ be an orthonormal basis of $H^{-s}_{\ol{B}_1}$ consisting of singular vectors of $i$. We let $X = \mathrm{span} \{ \psi_1, \ldots, \psi_{j-1} \}$. Then \eqref{is_est2} ensures that 
\[
\norm{h}_{H^{-2m}_{\ol{B}_1}} = \norm{i(h)}_{H^{-2m}_{\ol{B}_1}} \leq C(Cm)^{2m} j^{-\frac{2m-s}{n}} \norm{h}_{H^{-s}_{\ol{B}_1}}, \qquad h \perp X.
\]
By \Cref{lem_fk_smoothing}, we have   
\begin{align*}
\norm{F_{\kappa}h}_{\rm HS} &\leq C(Cm\kappa)^{2m} \kappa^{\alpha(n)} \norm{h}_{H^{-2m}_{\ol{B}_1}} \\
 &\leq C(Cm\kappa^{1/2})^{4m} \kappa^{\alpha(n)} j^{-\frac{2m-s}{n}} \norm{h}_{H^{-s}_{\ol{B}_1}}, \qquad h \perp X.
\end{align*}
Thus choosing $S = X$ in the min-max principle yields 
\[
\sigma_j(F_{\kappa}) \leq C(Cm\kappa^{1/2})^{4m} \kappa^{\alpha(n)} (j^{-\frac{1}{2n}})^{4m} j^{\frac{s}{n}}.
\]
Here $m \geq 1$ can be chosen freely. The function $f(t) = C(Ct\kappa^{1/2} j^{-\frac{1}{2n}})^{4t} \kappa^{\alpha(n)} j^{\frac{s}{n}}$ over $t \geq 0$ has a global minimum at $t=t_0$ where $Ct_0\kappa^{1/2} j^{-\frac{1}{2n}} = 1/e$. If $j \geq (2Ce \kappa^{1/2})^{2n}$, then $t_0 \geq 2$, and choosing $m = \lfloor t_0 \rfloor \geq t_0/2$ yields 
\begin{align*}
\sigma_j(F_{\kappa}) &\leq f(m) \leq f(t_0/2) = C (2e)^{-2t_0} \kappa^{\alpha(n)} j^{\frac{s}{n}} \\
 &\leq C \kappa^{\alpha(n)} j^{\frac{s}{n}} \exp(-c j^{\frac{1}{2n}}/\kappa^{1/2})
\end{align*}
where $C, c > 0$ only depend on $n$. This proves \eqref{fk_sest2_refined}.
\end{proof}

We conclude this section with a proof of \Cref{lem_sk}, which follows from the next result. 

\begin{lemma}\label{lem:coarea-general}
Let $n \ge 2$ be an integer. If the mapping $(\hat{z},\hat{x})\in\mS^{n-1}\times\mS^{n-1} \mapsto h(\hat{z}-\hat{x})$ is in $L^{1}(\mS^{n-1}\times\mS^{n-1})$, then 
\begin{subequations}
\begin{equation}
\begin{aligned}
& \int_{\mS^{n-1}}\int_{\mS^{n-1}} h(\hat{z}-\hat{x}) \sqrt{1-(\hat{z}\cdot\hat{x})^{2}} \, \rmd S(\hat{z}) \, \rmd S(\hat{x}) \\
& \quad = \frac{2^{3-n}\pi^{\frac{n-1}{2}}}{\Gamma(\frac{n-1}{2})} \int_{B_{2}} h(y)(4-\abs{y}^{2})^{\frac{n-2}{2}} \, \rmd y,
\end{aligned} \label{eq:coarea1}
\end{equation}
and 
\begin{equation}
\begin{aligned}
& \int_{\mS^{n-1}}\int_{\mS^{n-1}} h(\hat{z}-\hat{x}) \, \rmd S(\hat{z}) \, \rmd S(\hat{x}) \\
& \quad = \frac{2^{4-n}\pi^{\frac{n-1}{2}}}{\Gamma(\frac{n-1}{2})} \int_{B_{2}} h(y)\abs{y}^{-1}(4-\abs{y}^{2})^{\frac{n-3}{2}} \, \rmd y,
\end{aligned} \label{eq:coarea2}
\end{equation}
\end{subequations}
where $B_{2} = \{ y\in\mR^{n} : \abs{y} < 2 \}$. 
\end{lemma}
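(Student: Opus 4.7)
The plan is to apply the coarea formula to the smooth map $\Phi : \mS^{n-1}\times\mS^{n-1}\to\mR^{n}$, $\Phi(\hat z,\hat x) = \hat z-\hat x$, whose image is $B_{2}$ (up to boundary) and which is a submersion away from the measure-zero antipodal locus $\{\hat z=\pm\hat x\}$. For the $h$ in the statement, the pullback $f(\hat z,\hat x) = h(\hat z-\hat x)$ is constant along fibers, so the coarea identity becomes
\begin{equation*}
\int_{\mS^{n-1}\times\mS^{n-1}} h(\hat z-\hat x)\,J\Phi(\hat z,\hat x)\,\rmd S(\hat z)\,\rmd S(\hat x) = \int_{B_{2}} h(y)\,\mathcal{H}^{n-2}\bigl(\Phi^{-1}(y)\bigr)\,\rmd y,
\end{equation*}
and the work reduces to computing the Jacobian $J\Phi$ and the fiber volume.

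For $J\Phi = \sqrt{\det\bigl(D\Phi\,(D\Phi)^{\top}\bigr)}$ I would use orthonormal bases of $T_{\hat z}\mS^{n-1}$ and $T_{\hat x}\mS^{n-1}$ to obtain
\begin{equation*}
D\Phi\,(D\Phi)^{\top} = (I - \hat z \hat z^{\top}) + (I - \hat x \hat x^{\top}) = 2I - \hat z\hat z^{\top} - \hat x\hat x^{\top}.
\end{equation*}
The eigenvalues of this symmetric $n\times n$ matrix are $2$ on $\mathrm{span}\{\hat z,\hat x\}^{\perp}$ (with multiplicity $n-2$) and $1\pm\hat z\cdot\hat x$ on $\mathrm{span}\{\hat z,\hat x\}$, giving $J\Phi = 2^{(n-2)/2}\sqrt{1-(\hat z\cdot\hat x)^{2}}$. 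For the fiber, the identities $(\hat z+\hat x)\cdot(\hat z-\hat x)=0$ and $|\hat z+\hat x|^{2}=4-|y|^{2}$ show that $\Phi^{-1}(y)$ is parametrized by the $(n-2)$-sphere $\{z\in y^{\perp}:|z|=\sqrt{4-|y|^{2}}\}$ via $z\mapsto\bigl((y+z)/2,(z-y)/2\bigr)$. The differential $\dot z \mapsto (\dot z/2,\dot z/2)$ has norm $|\dot z|/\sqrt{2}$, so this parametrization scales $(n-2)$-dimensional volumes by $2^{-(n-2)/2}$ and yields $\mathcal{H}^{n-2}(\Phi^{-1}(y)) = 2^{-(n-2)/2}\,\omega_{n-2}(4-|y|^{2})^{(n-2)/2}$ with $\omega_{n-2} = 2\pi^{(n-1)/2}/\Gamma((n-1)/2)$.

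Plugging these computations into the coarea identity and cancelling the common factor $2^{(n-2)/2}$ produces \eqref{eq:coarea1}. For \eqref{eq:coarea2} I would use the algebraic identity $\sqrt{1-(\hat z\cdot\hat x)^{2}} = \tfrac12|\hat z-\hat x|\sqrt{4-|\hat z-\hat x|^{2}}$ (which follows from $|\hat z-\hat x|^{2}=2-2\hat z\cdot\hat x$) and apply \eqref{eq:coarea1} to $\tilde h(y) := 2h(y)/(|y|\sqrt{4-|y|^{2}})$; the constants collapse into the stated coefficient. The main technical point is the eigenvalue and fiber-volume computation above; the $L^{1}$ integrability hypothesis ensures the coarea formula is meaningful despite the critical locus $\hat z=\pm\hat x$ and the degeneracies at $|y|\in\{0,2\}$, which can be absorbed by a routine truncation/dominated convergence argument.
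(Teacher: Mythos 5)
Your proposal is correct and takes essentially the same route as the paper: the coarea formula applied to $\Phi(\hat z,\hat x)=\hat z-\hat x$, the Jacobian $2^{\frac{n-2}{2}}\sqrt{1-(\hat z\cdot\hat x)^2}$, the fiber volume $2^{-\frac{n-2}{2}}\,\frac{2\pi^{\frac{n-1}{2}}}{\Gamma(\frac{n-1}{2})}(4-\abs{y}^2)^{\frac{n-2}{2}}$, and the substitution $\tilde h(y)=2h(y)\abs{y}^{-1}(4-\abs{y}^2)^{-1/2}$ to pass from the first identity to the second. The only cosmetic difference is that you obtain the Jacobian by diagonalizing $2I-\hat z\hat z^{\intercal}-\hat x\hat x^{\intercal}$ directly via the eigenvectors $\hat z\pm\hat x$, and you parametrize the fiber by $\hat z+\hat x$ rather than by $\hat x$, whereas the paper evaluates the same determinant through the matrix determinant lemma and Sherman--Morrison formula and parametrizes the fiber by $\hat x\mapsto(\hat x,\hat x+y)$.
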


\begin{proof}[Proof of \Cref{lem_sk}]
For any compactly supported distribution $h$, \eqref{kkappa_kernel} gives 
\[
\norm{F_{\kappa}(h)}_{\rm HS}^2 = \int_{\mS^{n-1}}\int_{\mS^{n-1}} 
|K_{\kappa}[h](\theta,\omega)|^2 \, \rmd \theta \, \rmd \omega = \kappa^{n-1} \int_{\mS^{n-1}}\int_{\mS^{n-1}} |\hat{h}(\kappa(\omega-\theta))|^2 \, \rmd \theta \, \rmd \omega.
\]
Applying \Cref{lem:coarea-general} and changing variables, this is equal to 
\[
c_n \kappa^{n-1} \int_{B_2} |\hat{h}(\kappa y)|^2 |y|^{-1} (4-|y|^2)^{\frac{n-3}{2}} \,\rmd y = c_n \int_{B_{2\kappa}} \abs{\hat{h}(\xi)}^{2} |\xi|^{-1}(4 - \kappa^{-2} |\xi|^{2})^{\frac{n-3}{2}} \, \rmd \xi. \qedhere
\]
\end{proof}

The key to prove \Cref{lem:coarea-general} is to interpret $\hat{z} - \hat{x}$ as an element of $\ol{B}_2$ and to use the coarea formula on Riemannian manifolds, which can be found in \cite[Exercise~III.12]{Chavel06RiemannianGeometry}. 

\begin{lemma}\label{lem:Chavel-lemma}
Let $\mathcal{M},\mathcal{N}$ be $C^{r}$ Riemannian manifolds such that $m=\dim\,(\mathcal{M}) \ge \dim\,(\mathcal{N})=n$ and $r>m-n$, and let $\Phi:\mathcal{M}\rightarrow\mathcal{N}$ be a  $C^{r}$ function. Then for any measurable function $g:\mathcal{M}\rightarrow\mR$, which is everywhere non-negative or is in $L^{1}(\mathcal{M})$, one has 
\begin{equation*}
\int_{\mathcal{M}} g \mathsf{J}_{\Phi} \, \rmd V_{m} = \int_{\mathcal{N}} \left( \int_{\Phi^{-1}(y)} g|_{\Phi^{-1}(y)} \, \rmd V_{m-n} \right) \, \rmd V_{n}(y),
\end{equation*}
where for any $k$, $\rmd V_{k}$ denotes the $k$-dimensional volume form and 
\begin{equation}
\mathsf{J}_{\Phi}(p) = \left\{\begin{aligned}
& \abs{\det\,(\rmd \Phi|_{(\ker\,\rmd \Phi|_{p})^{\perp}})} && \text{when} \quad {\rm rank}\,(\rmd \Phi|_{p})=n, \\
& 0 && \text{when} \quad {\rm rank}\,(\rmd \Phi|_{p})<n,
\end{aligned}\right. \label{eq:Jacobian-formula}
\end{equation}
where $\rmd\Phi|_{p} : T_{p}\mathcal{M}\rightarrow T_{\Phi(p)}\mathcal{N}$ is the tangent map of $\Phi:\mathcal{M}\rightarrow\mathcal{N}$ at $p$. 
\end{lemma}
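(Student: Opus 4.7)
The plan is to reduce the identity to the classical Euclidean coarea formula (Federer's theorem), via a three-step scheme: (i) discard the critical set using Sard's theorem, (ii) localize via a partition of unity in charts adapted to $\Phi$, and (iii) identify the Jacobian bookkeeping in adapted orthonormal frames so that Fubini's theorem in $\mR^{m}=\mR^{n}\times\mR^{m-n}$ finishes the argument.

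First I would set $C=\{p\in\mathcal M:\mathrm{rank}(\rmd\Phi|_{p})<n\}$. On $C$ the integrand $g\mathsf J_{\Phi}$ vanishes by definition, so the left side reduces to an integral over $\mathcal M\setminus C$. The hypothesis $r>m-n$ is exactly the sharp regularity needed for Sard's theorem, which gives $V_{n}(\Phi(C))=0$; combined with a standard slicing argument (preimages of null sets under submersions are null on the remaining part), this shows that the right side also does not see $C$. Hence we may assume $\Phi$ is a submersion on the support of $g$.

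Next, choose a countable locally finite partition of unity $(\chi_{\alpha})$ subordinate to charts $(U_{\alpha},\varphi_{\alpha})$ of $\mathcal M\setminus C$ and $(V_{\alpha},\psi_{\alpha})$ of $\mathcal N$ with $\Phi(U_{\alpha})\subset V_{\alpha}$. By the constant-rank/submersion theorem we may arrange $\psi_{\alpha}\circ\Phi\circ\varphi_{\alpha}^{-1}$ to be the coordinate projection $(x',x'')\mapsto x'$ from $\mR^{n}\times\mR^{m-n}$ to $\mR^{n}$. By linearity and monotone convergence it suffices to prove the identity for $g=\chi_{\alpha}g$, i.e.\ in a single chart for the projection $\Phi(x',x'')=x'$ with pulled-back Riemannian metrics on domain and target.

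In such a chart, at every point pick an orthonormal frame $\{e_{1},\ldots,e_{m}\}$ of $T_{p}\mathcal M$ split as $\{e_{1},\ldots,e_{n}\}\subset(\ker\rmd\Phi|_{p})^{\perp}$ and $\{e_{n+1},\ldots,e_{m}\}\subset\ker\rmd\Phi|_{p}$. Writing the Riemannian volume forms as $\rmd V_{m}=\sqrt{G}\,\rmd x'\rmd x''$, $\rmd V_{n}=\sqrt{\det g_{\mathcal N}}\,\rmd y$, and the fiberwise form as $\rmd V_{m-n}=\sqrt{\det h}\,\rmd x''$ (with $h$ the induced metric on the fiber), a direct Gram-determinant computation based on the Cauchy-Binet formula gives
\begin{equation*}
\sqrt{G(p)}=\mathsf J_{\Phi}(p)\sqrt{\det g_{\mathcal N}(\Phi(p))}\sqrt{\det h(p)},
\end{equation*}
where equivalently $\mathsf J_{\Phi}(p)=\sqrt{\det(AA^{\top})}$ with $A$ the matrix of $\rmd\Phi|_{p}$ in any orthonormal bases. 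Substituting into the left-hand side and applying Fubini's theorem in $\mR^{n}\times\mR^{m-n}$ gives the stated formula for continuous compactly supported $g$. The extension to arbitrary non-negative measurable $g$ is by monotone convergence, and to $L^{1}(\mathcal M)$ by splitting into positive and negative parts.

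The main obstacle is the Jacobian identification in step (iii): one must check that the factor $\mathsf J_{\Phi}$, defined by restricting $\rmd\Phi$ to the orthogonal complement of its kernel, is precisely the "generalized Jacobian" $\sqrt{\det(AA^{\top})}$ that appears when expressing $\rmd V_{m}$ as a product of $\Phi^{*}\rmd V_{n}$ and the fiberwise volume form. This is purely linear algebra once an adapted orthonormal frame is fixed, but it is the only place where the Riemannian nature of both manifolds enters nontrivially, and it is what allows the reduction to Federer's Euclidean coarea formula and, ultimately, to Fubini.
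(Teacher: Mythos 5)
Your overall scheme --- discard the critical set via Sard (the hypothesis $r>m-n$ is indeed what makes Sard applicable), put $\Phi$ into the submersion normal form $(x',x'')\mapsto x'$ in charts via a partition of unity, and reduce to Fubini after factoring the volume forms --- is a legitimate self-contained route; for comparison, the paper does not prove this lemma at all but quotes it from Chavel (Exercise III.12). However, the one identity that carries all the content, and which you yourself single out as the crux, is stated incorrectly: you claim $\sqrt{G}=\mathsf{J}_{\Phi}\,\sqrt{\det g_{\mathcal N}}\,\sqrt{\det h}$, whereas the identity that is true and that Fubini requires is
\begin{equation*}
\mathsf{J}_{\Phi}\,\sqrt{\det G}\;=\;\sqrt{\det g_{\mathcal N}\circ\Phi}\;\sqrt{\det h},
\end{equation*}
i.e.\ $\mathsf{J}_{\Phi}$ must sit on the other side. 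A quick way to see that your version cannot hold: rescale the metric on $\mathcal N$ by $\lambda^{2}$; then $\mathsf{J}_{\Phi}$ and $\sqrt{\det g_{\mathcal N}}$ each scale by $\lambda^{n}$ while $\sqrt{G}$ and $\sqrt{\det h}$ are unchanged, so your right-hand side scales by $\lambda^{2n}$ and your left-hand side not at all. Concretely, for $G=\mathrm{diag}(a^{2},b^{2})$ on $\mR^{2}$, $g_{\mathcal N}=1$ on $\mR$ and $\Phi(x_{1},x_{2})=x_{1}$, one has $\mathsf{J}_{\Phi}=1/a$, $\sqrt{\det G}=ab$, $\sqrt{\det h}=b$, and only the displayed identity balances. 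If you substitute your version and apply Fubini as proposed, you end up comparing a weight $\mathsf{J}_{\Phi}^{2}$ against the fiber integrals, which is false; so as written the argument fails exactly at its load-bearing step.

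The fix is routine but must actually be carried out. In the adapted coordinates write $G$ in block form with upper-left $n\times n$ block $a$, lower-right $(m-n)\times(m-n)$ block $h$ (this block is precisely the induced metric on the fiber $\{x'=\mathrm{const}\}$) and off-diagonal block $b$. If $P=(I_{n}\ 0)$ is the coordinate matrix of $\rmd\Phi$, then its matrix in orthonormal bases is $A=g_{\mathcal N}^{1/2}PG^{-1/2}$, so $\mathsf{J}_{\Phi}^{2}=\det(AA^{\intercal})=\det g_{\mathcal N}\cdot\det\bigl((G^{-1})_{x'x'}\bigr)=\det g_{\mathcal N}/\det(a-bh^{-1}b^{\intercal})$, while the Schur factorization gives $\det G=\det h\cdot\det(a-bh^{-1}b^{\intercal})$; multiplying the two yields the displayed identity, after which Fubini and your limiting arguments (monotone convergence for $g\ge 0$, positive and negative parts for $g\in L^{1}(\mathcal M)$) do finish the proof. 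The remaining ingredients --- Sard to ensure a.e.\ fiber misses the critical set so that both sides ignore it, the rank theorem, and the partition of unity --- are fine as you use them.
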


Besides, the following elementary identity is also needed to derive \Cref{lem:coarea-general}: 

\begin{lemma}\label{eq:matrix-identity}
For any vectors $u,v\in\mR^{m}$, we have 
\begin{equation*}
\begin{aligned}
& \det\,(2I_{m}-u\otimes u-v\otimes v) \\
& \quad = 2^{m-1}(2 - u\cdot u - v\cdot v + \frac{1}{2}(u\cdot u)(v\cdot v) - \frac{1}{2}(v\cdot u)^{2}),
\end{aligned}
\end{equation*}
where $I_{m}$ is the $m\times m$ identity matrix, $a\otimes b=ab^{\intercal}$ is the juxtaposition of the vectors and $a\cdot b=a^{\intercal}b$ is the inner product of the vectors. 
\end{lemma}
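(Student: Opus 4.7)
The plan is to treat $2I_m - u\otimes u - v\otimes v$ as a rank-two perturbation of a scalar multiple of the identity and apply the matrix determinant lemma (the Weinstein–Aronszajn / Sylvester determinant identity). Concretely, I would set $U = [u \ v]$ and $V = [-u \ -v]$ as $m\times 2$ matrices, so that $UV^\intercal = -u\otimes u - v\otimes v$, and write
\begin{equation*}
\det(2I_m - u\otimes u - v\otimes v) = \det(2I_m + UV^\intercal) = \det(2I_m)\,\det\bigl(I_2 + V^\intercal (2I_m)^{-1} U\bigr).
\end{equation*}

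The first factor is simply $2^m$. For the second, one computes
\begin{equation*}
V^\intercal (2I_m)^{-1} U = -\tfrac{1}{2}\begin{pmatrix} u\cdot u & u\cdot v \\ v\cdot u & v\cdot v \end{pmatrix},
\end{equation*}
so that $I_2 + V^\intercal (2I_m)^{-1} U$ is the $2\times 2$ matrix with diagonal entries $1 - \tfrac{1}{2}u\cdot u$ and $1 - \tfrac{1}{2}v\cdot v$ and off-diagonal entries $-\tfrac{1}{2}u\cdot v$. Expanding its determinant gives
\begin{equation*}
1 - \tfrac{1}{2}(u\cdot u) - \tfrac{1}{2}(v\cdot v) + \tfrac{1}{4}(u\cdot u)(v\cdot v) - \tfrac{1}{4}(u\cdot v)^2,
\end{equation*}
and multiplying through by $2^m$ and pulling out a common factor of $2^{m-1}$ recovers exactly the right-hand side of the claimed identity.

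There is no real obstacle here: once the rank-two structure is recognised, the computation is mechanical. An equivalent, slightly more geometric route — useful as a sanity check — is to observe that $M := 2I_m - u\otimes u - v\otimes v$ acts as $2\,\mathrm{Id}$ on the orthogonal complement of $\mathrm{span}\{u,v\}$, which contributes $2^{m-2}$ (when $u,v$ are linearly independent), while the determinant of the restriction of $M$ to $\mathrm{span}\{u,v\}$, computed in any basis and corrected by the Gram determinant, yields the quadratic expression inside the brackets; the degenerate cases $v = 0$ or $v = \lambda u$ are then handled by continuity (or directly), and match the formula by inspection.
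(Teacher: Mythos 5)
Your computation is correct, and it takes a recognisably different route from the paper's. The paper peels off the rank-two perturbation one rank-one piece at a time: it applies the (rank-one) matrix determinant lemma twice, which forces it to invert $2I_{m}-u\otimes u$ via the Sherman--Morrison formula; since that inverse only exists when $u\cdot u\neq 2$, the paper must also insert a continuity argument to cover the degenerate case. You instead invoke the rank-two form of the determinant lemma (Weinstein--Aronszajn) in a single step, $\det(2I_{m}+UV^{\intercal})=\det(2I_{m})\det\bigl(I_{2}+V^{\intercal}(2I_{m})^{-1}U\bigr)$ with $U=[u\ v]$, $V=[-u\ -v]$, which only requires inverting $2I_{m}$; this is always possible, so no case distinction or limiting argument is needed, and the identity drops out of a $2\times 2$ determinant. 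Your version is therefore slightly cleaner and more uniform, at the (small) cost of quoting the rank-$k$ generalisation of the lemma rather than just its rank-one form. Your secondary ``geometric'' argument is stated loosely (the Gram-determinant correction and the degenerate cases are only sketched), but since you offer it only as a sanity check and your main argument is complete, that is not a gap.
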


\begin{proof}
Since both sides are continuous in $u$, we only need to prove the result when $u\cdot u\neq 2$. Using the matrix determinant lemma twice, we have 
\begin{equation*}
\begin{aligned}
& \det\,(2I_{m} - u\otimes u - v\otimes v) \\
& \quad = (1 - v^{\intercal}(2I_{m}-u\otimes u)^{-1}v) \det\,(2I_{m} - u\otimes u) \\
& \quad = 2^{m-1}(2- u^{\intercal}u)(1 - v^{\intercal}(2I_{m}-u\otimes u)^{-1}v).
\end{aligned}
\end{equation*}
On the other hand, by using the Sherman-Morrison formula, we see that 
\begin{equation*}
(2I_{m}-u\otimes u)^{-1} = \frac{1}{2}I_{m} + \frac{u\otimes u}{4-2u^{\intercal}u}.
\end{equation*}
Combining the above two equations we reach 
\begin{equation*}
\begin{aligned}
& \det\,(2I_{m}-u\otimes u-v\otimes v) \\
& \quad = 2^{m-1}(2-u^{\intercal}u) \left( 1 - v^{\intercal} \left( \frac{1}{2}I_{m} + \frac{u\otimes u}{4-2u^{\intercal}u}\right)v\right) \\
& \quad = 2^{m-1}(2 - u^{\intercal}u) \left( 1 - \frac{1}{2}v^{\intercal}v - \frac{\frac{1}{2}(v^{\intercal}u)^{2}}{2-u^{\intercal}u} \right) \\
& \quad = 2^{m-1}(2 - u^{\intercal}u - v^{\intercal}v + \frac{1}{2}(u^{\intercal}u)(v^{\intercal}v) - \frac{1}{2}(v^{\intercal}u)^{2})
\end{aligned}
\end{equation*}
which is our desired lemma. 
\end{proof}

\begin{proof}[Proof of \Cref{lem:coarea-general}]
We consider $\mS^{n-1}$ as a submanifold of $\mR^{n}$ by writing 
\begin{equation*}
\begin{aligned}
\mS^{n-1} &= \{\hat{x}\in\mR^{n} : \abs{\hat{x}}=1 \}, \\
T_{\hat{x}}\mS^{n-1} &= \{\hat{x}^{\perp}\in\mR^{n} : \hat{x}^{\perp}\cdot\hat{x}=0 \}.
\end{aligned}
\end{equation*}
Let $\Phi:\mS^{n-1}\times\mS^{n-1}\rightarrow\mR^{n}$ be given by $\Phi(\hat{z},\hat{x})=\hat{z}-\hat{x}$. For each $y\in B_{2}\setminus\{0\} \subset \mR^{n}$ we first investigate the set $\Phi^{-1}(y)$. Write $\hat{y} = y/\abs{y}$. If $(\hat{z},\hat{x})\in\Phi^{-1}(y)$, the relations $|y|^2 = |\hat{z}-\hat{x}|^2 = 2 - 2 \hat{z} \cdot \hat{x}$ and $\hat{x} \cdot y = \hat{x} \cdot \hat{z} - 1$ give that 
\begin{equation}
\hat{x}\cdot\hat{y} = -\frac{\abs{y}}{2} ,\qquad \hat{z} = \hat{x} + y. \label{eq:coordinate}
\end{equation}
The first equation in \eqref{eq:coordinate} states that the projection of $\Phi^{-1}(y)$ in the second variable $\hat{x}$ 
is isometric to an $(n-2)$-sphere of radius $\sqrt{1-\frac{\abs{y}^{2}}{4}}$, given by 
\[
S(n,\abs{y}) = \{ \hat{x} \in \mS^{n-1} \,:\, \hat{x}\cdot\hat{y} = -\frac{\abs{y}}{2} \}.
\]
Note that when $n=2$, $S(2,\abs{y})$ consists of two points at distance $\sqrt{4-\abs{y}^{2}}$. And once $\hat{x}$ satisfying the first equation in \eqref{eq:coordinate} is fixed, $\hat{z}$ is uniquely determined through the second equation in \eqref{eq:coordinate}.

The above discussion shows that $\Phi^{-1}(y) = F(S(n,|y|))$ where $F$ is the diffeomorphism 
\[
F: S(n,|y|) \to \Phi^{-1}(y), \ F(\hat{x}) = (\hat{x}, \hat{x}+y).
\]
If $\hat{x} \in S(n,|y|)$ and $\{e_1, \ldots, e_{n-2} \}$ is an orthonormal basis of $T_{\hat{x}} S(n,|y|)$, we have $dF(e_j) = (e_j,e_j)$ where the vectors $(e_j,e_j)$ are orthogonal with length $\sqrt{2}$. Therefore 
\[
J_F(\hat{x}) = |\det(dF|_{\hat{x}})| = 2^{\frac{n-2}{2}}.
\]
We can compute the volume of $\Phi^{-1}(y)$ by changing variables \cite[Lemma 8.1.8]{PSU23book} as 
\begin{align}
 & \int_{\Phi^{-1}(y)} \, \rmd V_{n-2} = \int_{F(S(n,|y|))} \, \rmd V_{n-2} = \int_{S(n,|y|)} J_F \, \rmd V_{n-2} = 2^{\frac{n-2}{2}}V_{n-2}(S(n,\abs{y})) \label{eq:volume-form} \\
 & \qquad = 2^{\frac{n-2}{2}} \left( 1-\frac{|y|^2}{4} \right)^{\frac{n-2}{2}} V_{n-2}(\mS^{n-2}) = \frac{2^{2-\frac{n}{2}} \pi^{\frac{n-1}{2}}}{\Gamma(\frac{n-1}{2})} (4-|y|^2)^{\frac{n-2}{2}} \notag
\end{align}
where $V_{n-2}$ denotes the volume induced by the Euclidean metric.

We now compute $\mathsf{J}_{\Phi}$. One can verify that 
\begin{equation*}
\rmd \Phi_{(\hat{z},\hat{x})} (\hat{z}^{\perp},\hat{x}^{\perp}) = \hat{z}^{\perp} - \hat{x}^{\perp}
\end{equation*}
for all $(\hat{z}^{\perp},\hat{x}^{\perp}) \in T_{\hat{z}}\mS^{n-1}\times T_{\hat{x}}\mS^{n-1} \cong T_{(\hat{z},\hat{x})}(\mS^{n-1}\times\mS^{n-1})$. Note that 
\begin{equation}
{\rm rank}\,(\rmd \Phi|_{(\hat{z},\hat{x})})=n ,\quad \text{if and only if} \quad \hat{z}\neq\pm\hat{x}. \label{eq:full-rank-condition}
\end{equation}
Now we let $\hat{z}\neq\pm\hat{x}$. If $\{ e_1, \ldots, e_{n-2} \}$ is an orthonormal basis of $\{ \hat{x} \}^{\perp} \cap \{ \hat{z} \}^{\perp} \subset \mR^n$, we see that there are unit vectors $\alpha,\beta \in \mR^{n}$ such that one has orthonormal bases 
\begin{equation*}
\begin{aligned}
& T_{\hat{z}}\mS^{n-1} = {\rm span}\,\{e_{1},\cdots,e_{n-2},\alpha\}, \\
& T_{\hat{x}}\mS^{n-1} = {\rm span}\,\{e_{1},\cdots,e_{n-2},\beta\}. 
\end{aligned}
\end{equation*}
With these orthonormal bases at hand, one sees that 
\begin{equation}
\begin{aligned}
& I_{n} = e_{1}\otimes e_{1} + \cdots + e_{n-2}\otimes e_{n-2} + \alpha\otimes\alpha + \hat{z}\otimes\hat{z}, \\
& I_{n} = e_{1}\otimes e_{1} + \cdots + e_{n-2}\otimes e_{n-2} + \beta\otimes\beta + \hat{x}\otimes\hat{x}.
\end{aligned} \label{eq:matrix-observation}
\end{equation}
Moreover, by a dimension count we have 
\begin{equation*}
\begin{aligned}
\ker\,(\rmd\Phi|_{(\hat{z},\hat{x})}) &= {\rm span}\,\{(e_{1},e_{1})/\sqrt{2},\cdots,(e_{n-2},e_{n-2})/\sqrt{2}\}, \\
\ker\,(\rmd\Phi|_{(\hat{z},\hat{x})})^{\perp} &= {\rm span}\, \left\{\begin{aligned}  
& (e_{1},-e_{1})/\sqrt{2},\cdots,(e_{n-2},-e_{n-2})/\sqrt{2}, \\
& (\alpha,0),(0,-\beta)
\end{aligned}\right\}.
\end{aligned}
\end{equation*}
In the last two formulas, the bases on the right hand side are orthonormal. Consequently, we can compute $\rmd\Phi|_{\ker\,(\rmd\Phi|_{(\hat{z},\hat{x})})^{\perp}}$ as follows: 
\begin{equation*}
\begin{aligned}
 & \rmd\Phi|_{(\hat{z},\hat{x})}((e_{j},-e_{j})/\sqrt{2}) = \sqrt{2} e_{j} , \quad \text{for $j=1,\cdots,n-2$,} \\
 & \rmd\Phi|_{(\hat{z},\hat{x})}(\alpha,0) = \alpha,  \\
 & \rmd\Phi|_{(\hat{z},\hat{x})}(0,-\beta) = \beta.
\end{aligned}
\end{equation*}
By using \eqref{eq:Jacobian-formula}, the fact $\abs{\det\,(A)}^2= \det\,(AA^{\intercal})$, \eqref{eq:matrix-observation} and \Cref{eq:matrix-identity}, one has 
\begin{equation*}
\begin{aligned}
& \mathsf{J}_{\Phi}(\hat{z},\hat{x})^{2} = \abs{\det\,(\sqrt{2} e_{1}, \ldots, \sqrt{2} e_{n-2},\alpha,\beta)}^{2}\\
& \quad = \det\,(2(e_{1}\otimes e_{1}) + \ldots + 2(e_{n-2}\otimes e_{n-2}) + \alpha\otimes\alpha + \beta\otimes\beta) \\
& \quad =  \det\,(2I_{n} - \hat{z}\otimes\hat{z} - \hat{x}\otimes\hat{x}) \\
& \quad = 2^{n-2} (1 - (\hat{z}\cdot\hat{x})^{2}).
\end{aligned}
\end{equation*}
By continuity, we obtain that 
\begin{equation}
\mathsf{J}_{\Phi}(\hat{z},\hat{x}) = 2^{\frac{n}{2}-1} \sqrt{1 - (\hat{z}\cdot\hat{x})^{2}} \quad \text{for all $(\hat{z},\hat{x})\in\mS^{n-1}\times\mS^{n-1}$.} \label{eq:Jacobian}
\end{equation}

Plugging \eqref{eq:volume-form} and \eqref{eq:Jacobian} into \Cref{lem:Chavel-lemma}, we reach 
\begin{equation*}
\begin{aligned}
& \int_{\mS^{n-1}}\int_{\mS^{n-1}} h(\hat{z}-\hat{x}) \sqrt{1-(\hat{z}\cdot\hat{x})^{2}} \, \rmd S(\hat{z}) \, \rmd S(\hat{x}) \\
& \quad = 2^{1-\frac{n}{2}} \int_{\mS^{n-1}} \int_{\mS^{n-1}} h(\hat{z}-\hat{x}) \mathsf{J}_{\Phi}(\hat{z},\hat{x}) \, \rmd S(\hat{z}) \, \rmd S(\hat{x}) \\
& \quad = 2^{1-\frac{n}{2}} \int_{B_{2}} h(y) \left( \int_{\Phi^{-1}(y)} \, \rmd V_{n-2} \right) \, \rmd y \\
& \quad = \frac{2^{3-n} \pi^{\frac{n-1}{2}}}{\Gamma(\frac{n-1}{2})}  \int_{B_{2}} h(y)(4-\abs{y}^{2})^{\frac{n-2}{2}} \,\rmd y,
\end{aligned}
\end{equation*}
which concludes the proof of \eqref{eq:coarea1}. 

Finally, we note that $\abs{\Phi(\hat{z},\hat{x})}^{2} = 2 - 2\hat{z}\cdot\hat{x}$ and therefore 
\begin{equation*}
\abs{\Phi(\hat{z},\hat{x})}^{2} (4-\abs{\Phi(\hat{z},\hat{x})}^{2}) = (2-2\hat{z}\cdot\hat{x})(2 + 2\hat{z}\cdot\hat{x}) = 4(1-(\hat{z}\cdot\hat{x})^{2}).
\end{equation*}
Based on this observation, by substituting $h(y)=2 \tilde{h}(y)\abs{y}^{-1}(4-\abs{y}^2)^{-\frac{1}{2}}$ in \eqref{eq:coarea1}, we conclude \eqref{eq:coarea2}.
\end{proof}

\section{\label{sec:instability}From singular values to instability estimates}

The relation of singular values and the stability issue for the abstract linear inverse problems has been widely discussed. See for example \cite{EKN89TikhonovRegularization}, \cite[Appendix~A]{KRS21InstabilityMechanism} and references therein. In short, the decay rate of the singular values for the direct operators is related to the stability or instability for the corresponding inverse problem. As we have seen in our results for the specific problems, although the singular values tend to zero exponentially, the singular values in the stable region remain uniformly bounded from below, indicating  stable recovery in some subspaces. In this section, we will refine the instability estimates in \cite[Appendix~A]{KRS21InstabilityMechanism} to take into account the increasing resolution phenomenon for some inverse problems. 

Let $X,Y$ be two separable Hilbert spaces and let $T:X\rightarrow Y$ be a bounded compact injective linear operator. In this case, there exists a sequence of singular values $(\sigma_{j})$ with $\sigma_{1} \ge \sigma_{2} \ge \cdots \rightarrow 0$. 
If we interpret $T$ as the forward operator, then the corresponding inverse problem for $T$ is given by the following: 
\begin{equation}
\text{Given $g\in Y$, determine $f\in X$ with $Tf=g$.} \label{eq:IP}
\end{equation}

As $T$ is a compact operator, the stability for the inverse problem \eqref{eq:IP} can be very poor. To study the ill-posedness of \eqref{eq:IP} we will consider the case where the unknown $f$ belongs to a compact set that  represents the additional a priori bounds available in the problem. Given any parameter $\gamma_{0}>0$ and any orthonormal basis $(\phi_{k}) \subset X$ (not necessarily related to a singular value basis), we define the set 
\begin{equation}
K_{\gamma_{0},(\phi_{k})} = \left\{ f\in X : \norm{f}_{\gamma_{0},(\phi_{k})} := \left( \sum_{j=1}^{\infty} j^{2\gamma_{0}} \abs{(f,\phi_{j})_{X}}^{2} \right)^{1/2} \le 1 \right\}, \label{set:K-gamma}
\end{equation}
which is compact in $X$, see e.g.\ \cite[Lemma~A.1]{KRS21InstabilityMechanism}. We now refine the abstract instability result in \cite[Lemma~A.7]{KRS21InstabilityMechanism} in the following lemma. 

\begin{lemma}\label{lem:refine-KRS}
Let $X,Y$ be two separable Hilbert spaces and let $T:X\rightarrow Y$ be a compact injective linear operator. Given any orthonormal basis $(\phi_{j}) \subset X$ and any $\gamma_{0}\ge 1$, we consider the set $K_{\gamma_{0},(\phi_{j})}$ defined in \eqref{set:K-gamma}. Suppose that the singular values of $T:X\rightarrow Y$ satisfy 
\begin{equation}
\sigma_{j}(T) \le \min\{h_{1},h_{2}\exp(-\mu j^{\beta})\} \label{eq:singular-value} 
\end{equation}
for some parameters $\beta>0$ and $\mu>0$. If there exists a non-decreasing function $t\in\mR_{+} \mapsto \omega(t)\in\mR_{+}$ such that $T|_{K_{\gamma_{0},(\phi_{j})}}$ is $\omega$-stable in the sense of 
\begin{equation*}
\norm{f}_{X} \le \omega (\norm{Tf}_{Y}) \quad \text{for all $f\in K_{\gamma_{0},(\phi_{j})}$,}
\end{equation*}
then 
\begin{equation*}
\omega(t) \ge \max \left\{ h_{1}^{-1} t , 2^{-\gamma_{0}}\mu^{\gamma_{0}/\beta}(\log h_{2} + \log(1/t))^{-\gamma_{0}/\beta} \right\}
\end{equation*}
for all $0<t<\min\{h_{1}2^{-\gamma_{0}},h_{2}e^{-\mu}\}$. 
\end{lemma}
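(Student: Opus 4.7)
The plan is to establish the two lower bounds in the maximum separately. For the linear bound $\omega(t) \ge h_1^{-1} t$, I would simply test the stability hypothesis on a single basis element: since $\sigma_1(T) \le h_1$ forces $\|T\|_{X \to Y} \le h_1$, I take $f = (t/h_1)\phi_1$. The constraint $t < h_1 2^{-\gamma_0} \le h_1$ gives $\|f\|_{\gamma_0,(\phi_j)} = t/h_1 \le 1$, hence $f \in K_{\gamma_0,(\phi_j)}$, and monotonicity of $\omega$ together with $\|Tf\|_Y \le t$ yields $t/h_1 = \|f\|_X \le \omega(\|Tf\|_Y) \le \omega(t)$.

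The logarithmic bound is the substantive part. For each integer $N \ge 1$, I would produce a ``bad'' element of $K_{\gamma_0,(\phi_j)}$ whose image under $T$ is exponentially small. Let $X_N = \mathrm{span}\{\phi_1,\ldots,\phi_N\}$ and let $V_{N-1}$ be the span of the first $N-1$ right singular vectors of $T$. A dimension count shows that $X_N \cap V_{N-1}^{\perp}$ contains a unit vector $f_N$, and the Courant--Fischer characterization of $\sigma_N(T)$, combined with the hypothesis \eqref{eq:singular-value}, yields $\|Tf_N\|_Y \le \sigma_N(T) \le h_2 \exp(-\mu N^\beta)$. The orthonormality of $(\phi_j)$ gives the crucial bound $\|f_N\|_{\gamma_0,(\phi_j)} \le N^{\gamma_0}\|f_N\|_X = N^{\gamma_0}$, so $g_N := N^{-\gamma_0} f_N$ lies in $K_{\gamma_0,(\phi_j)}$. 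Applying the $\omega$-stability hypothesis to $g_N$ then yields
\begin{equation*}
N^{-\gamma_0} = \|g_N\|_X \le \omega(\|Tg_N\|_Y) \le \omega(h_2 \exp(-\mu N^\beta)).
\end{equation*}

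Finally, I would convert this discrete family of inequalities into the continuous logarithmic bound. Given $0 < t < h_2 e^{-\mu}$, the sequence $t_N := h_2 \exp(-\mu N^\beta)$ is strictly decreasing with $t_1 > t$ and $t_N \to 0$, so there is a unique $N \ge 1$ with $t_{N+1} \le t \le t_N$. Monotonicity of $\omega$ and the preceding estimate applied with index $N+1$ give $\omega(t) \ge (N+1)^{-\gamma_0} \ge (2N)^{-\gamma_0}$ (since $N \ge 1$ implies $N+1 \le 2N$), while the inequality $t \le t_N$ rearranges to $N \le \mu^{-1/\beta}(\log h_2 + \log(1/t))^{1/\beta}$. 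Combining these two estimates produces the claimed bound $\omega(t) \ge 2^{-\gamma_0} \mu^{\gamma_0/\beta}(\log h_2 + \log(1/t))^{-\gamma_0/\beta}$.

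The main obstacle is the construction of $f_N$: the orthonormal basis $(\phi_j)$ defining the compact set $K_{\gamma_0,(\phi_j)}$ is arbitrary and need not align with the singular directions of $T$, so one cannot simply use a singular vector (whose $\|\cdot\|_{\gamma_0,(\phi_j)}$-norm could be uncontrolled). The Courant--Fischer min-max principle is precisely the tool that decouples the two bases, ensuring that any $N$-dimensional subspace of $X$ contains a unit vector mapped to something of norm at most $\sigma_N(T)$. Once this is in hand, the reduction to the claimed modulus is a bookkeeping exercise based on monotonicity of $\omega$ and the explicit form of \eqref{eq:singular-value}.
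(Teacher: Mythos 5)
Your proof is correct and follows essentially the same route as the paper, with one welcome difference: where the paper invokes the abstract fact from \cite[(A.5)]{KRS21InstabilityMechanism} to produce, for each $N$, a vector in $\mathrm{span}\{\phi_1,\ldots,\phi_N\}$ whose image under $T$ is controlled by $\sigma_N(T)$, you reprove this directly via a dimension count and the Courant--Fischer min-max principle, making the argument self-contained. The remaining bookkeeping --- normalizing by $N^{-\gamma_0}$ to land in $K_{\gamma_0,(\phi_j)}$, applying $\omega$-stability, and then discretizing $t$ via the sequence $t_N = h_2\exp(-\mu N^\beta)$ --- is the same reparametrization of the paper's $\epsilon \leftrightarrow N_{\gamma_0}(\epsilon)$ correspondence (you parametrize by $N$ and set $\epsilon = N^{-\gamma_0}$, the paper parametrizes by $\epsilon$ and sets $N = \lfloor\epsilon^{-1/\gamma_0}\rfloor$), with the factor $2^{-\gamma_0}$ entering identically through $N+1 \le 2N$ on your side and through $\lfloor\epsilon^{-1/\gamma_0}\rfloor \ge \tfrac{1}{2}\epsilon^{-1/\gamma_0}$ on the paper's. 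Your handling of the linear bound via the single test vector $(t/h_1)\phi_1$ is also a minor simplification of the paper's Case~1. No gaps.
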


\begin{proof}
We define $N_{\gamma_{0}}(\epsilon) := \lfloor \epsilon^{-1/\gamma_{0}} \rfloor$ and observe that 
\begin{equation*}
N_{\gamma_{0}}(\epsilon) \ge \frac{1}{2} \epsilon^{-1/\gamma_{0}} \quad \text{for all $0 < \epsilon < 2^{-\gamma_{0}}$.}
\end{equation*}
By choosing $j=N_{\gamma_{0}}(\epsilon)$ in \eqref{eq:singular-value}, we see that 
\begin{equation*}
\begin{aligned}
\epsilon \sigma_{N_{\gamma_{0}}(\epsilon)} &\le \epsilon \min \{ h_{1} , h_{2} \exp(-\mu N_{\gamma_{0}}(\epsilon)^{\beta}) \} \\
& \le \min \{ h_{1}\epsilon , h_{2} \exp(-\mu 2^{-\beta} \epsilon^{-\beta/\gamma_{0}}) \}
\end{aligned}
\end{equation*}
for all $0 < \epsilon < 2^{-\gamma_{0}} \le 1$. By using \cite[(A.5)]{KRS21InstabilityMechanism}, there exists $f_{\epsilon} \in {\rm span}\,\{\phi_{j}\}_{j=1}^{N_{\gamma_{0}}(\epsilon)}$ with $\norm{f_{\epsilon}}_{X}=\epsilon$ such that 
\begin{equation*}
\begin{aligned}
\norm{Tf_{\epsilon}}_{Y} &\le \sigma_{N_{\gamma_{0}}(\epsilon)}\norm{f_{\epsilon}}_{X} = \epsilon \sigma_{N_{\gamma_{0}}(\epsilon)} \\
&  \le \min \{ h_{1}\epsilon , h_{2} \exp(-\mu 2^{-\beta} \epsilon^{-\beta/\gamma_{0}}). \}
\end{aligned}
\end{equation*}
We also have $\norm{f_{\eps}}_{\gamma_{0},(\phi_{k})} \leq N_{\gamma_{0}}(\epsilon)^{\gamma_0} \norm{f_{\epsilon}}_{X} \leq 1$. Since $T|_{K_{\gamma_{0}},(\phi_{j})}$ is $\omega$-stable, we see that 
\begin{equation}
\epsilon = \norm{f_{\epsilon}}_{X} \le \omega (\norm{Tf_{\epsilon}}_{Y}) \le \omega \left( \min \{ h_{1}\epsilon , h_{2} \exp(-\mu 2^{-\beta} \epsilon^{-\beta/\gamma_{0}}) \} \right). \label{eq:abstract-instability1}
\end{equation}
Assuming that $0<t<\min\{h_{1}2^{-\gamma_{0}},h_{2}e^{-\mu}\}$, we can choose 
\begin{equation*}
\epsilon = \max\left\{ h_{1}^{-1}t , 2^{-\gamma_{0}}\mu^{\gamma_{0}/\beta} (\log h_{2} + \log(1/t))^{-\gamma_{0}/\beta} \right\} \in (0,2^{-\gamma_{0}}).
\end{equation*}
\textbf{Case 1.} If $\epsilon = h_{1}^{-1}t$, then \eqref{eq:abstract-instability1} implies that 
\begin{equation*}
h_{1}^{-1}t \le \omega(h_{1}\epsilon) = \omega(t).
\end{equation*}
\textbf{Case 2.} If $\epsilon=2^{-\gamma_{0}}\mu^{\gamma_{0}/\beta} (\log h_{2} + \log(1/t))^{-\gamma_{0}/\beta}$, then \eqref{eq:abstract-instability1} implies that 
\begin{equation*}
2^{-\gamma_{0}}\mu^{\gamma_{0}/\beta} (\log h_{2} + \log(1/t))^{-\gamma_{0}/\beta} \le \omega(h_{2} \exp(-\mu 2^{-\beta} \epsilon^{-\beta/\gamma_{0}})) =\omega(t).
\end{equation*}
Combining the above two cases, we conclude our lemma. 
\end{proof}

We are now ready to prove \Cref{thm:1-instability}. 

\begin{proof}[Proof of \Cref{thm:1-instability}]
We choose $X=L^{2}(\mS^{n-1})$, $Y = L^2(B_1)$, $T = A_{\kappa}$ and $(\phi_{j})$ as the eigenfunctions corresponding to $-\Delta_{\mS^{n-1}}$. These choices lead us to 
\begin{equation*}
\norm{f}_{\gamma_{0},(\phi_{j})} = \norm{f}_{H^1(\mS^{n-1})} \quad \text{with $\gamma_{0}=\frac{1}{n-1}$.}
\end{equation*}
From \Cref{thm:1}, we have 
\begin{equation*}
\sigma_{j}(A_{\kappa}) \lesssim \min\left\{1,\exp\left(-c\kappa^{-1}j^{\frac{1}{n-1}}\right)\right\},
\end{equation*}
which verifies \eqref{eq:singular-value} with 
\begin{equation*}
h_{1} \sim 1 ,\quad h_{2} \sim 1 ,\quad \mu = c\kappa^{-1} ,\quad \beta = \frac{1}{n-1}. 
\end{equation*}
By using \Cref{lem:refine-KRS}, we then see that 
\begin{equation*}
\omega(t) \gtrsim \max \left\{ t , c\kappa^{-1}(C + \log(1/t))^{-1} \right\}
\end{equation*}
for all $0<t\lesssim 1$, since $e^{-c\kappa^{-1}}\ge e^{-c} \gtrsim 1$, which concludes our theorem.
\end{proof}

We are now ready to prove \Cref{thm:2-instability} as well. 

\begin{proof}[Proof of \Cref{thm:2-instability}]
We choose $X=L^{2}_{\ol{B}_{1}}$, $Y = {\rm HS}(\mS^{n-1})$, $T = F_{\kappa}$ and $(\phi_{j})$ as the Dirichlet eigenfunctions of $-\Delta$ in $B_{1}$. For each $f \in H^1_{\ol{B}_1}$, we have 
\begin{equation*}
\norm{f}_{\gamma_{0},(\phi_{j})} \sim \norm{f}_{H^1_{\ol{B}_1}} \quad \text{with} \quad \gamma_{0} = \frac{1}{n}.
\end{equation*}
From \Cref{rem:thm2}, we have 
\begin{equation*}
\sigma_{j}(F_{\kappa}:L^{2}_{\ol{B}_{1}}\rightarrow {\rm HS}) \lesssim \min\left\{1,\kappa^{\alpha(n)}\exp\left(-c\kappa^{-1/2}j^{1/(2n)}\right)\right\}
\end{equation*}
which verifies \eqref{eq:singular-value} with 
\begin{equation*}
h_{1} \sim 1 ,\quad h_{2} \sim \kappa^{\alpha(n)} ,\quad \mu = c\kappa^{-1/2} ,\quad \beta = \frac{1}{2n}. 
\end{equation*}
By using \Cref{lem:refine-KRS}, we then see that 
\begin{equation*}
\omega(t) \gtrsim \max \left\{ t , c\kappa^{-1}(1 + \alpha(n)\log\kappa + \log(1/t))^{-2} \right\}
\end{equation*}
for all $0<t\lesssim 1$, since $e^{-c\kappa^{-1/2}}\ge e^{-c} \gtrsim 1$. This concludes our theorem. 
\end{proof}

\appendix
\section{\label{sec:numerical}Numerical simulations for singular values}

In the section we give some numerical evidence for the singular value estimates in \Cref{thm:1} and \Cref{thm:2}. We note that singular value computations related to \Cref{thm:1} are already given in \cite{GriesmaierSylvester2017, GriesmaierSylvester2017_3D} for $n=2,3$.

\addtocontents{toc}{\SkipTocEntry}
\subsection{Unique continuation for Herglotz waves}

In \Cref{rem:Jacobi-Anger} we see that the singular values $\sigma_{j}$ for the Herglotz operator $A_{\kappa}$ are given by 
\begin{equation*}
\left( \frac{(2\pi)^n}{\kappa} \int_{0}^{\kappa} r J_{\ell+\frac{n-2}{2}}(r)^{2} \, \rmd r \right)^{1/2},
\end{equation*}
with correct multiplicity and arranged in nonincreasing order. We now restrict to the case $n=3$. Using the spherical Bessel function $j_{\ell}(r) = \sqrt{\frac{\pi}{2r}} J_{\ell+\frac{1}{2}}(r)$, we obtain the following formula for the singular values (when arranged in nonincreasing order):
\begin{equation}
\sigma_{\ell}^{m}(A_{\kappa}) = 4\pi\kappa \left( \int_{0}^{1} r^{2} j_{\ell}(\kappa r)^{2} \, \rmd r \right)^{1/2} \quad \text{with multiplicity $2\ell+1$}
\end{equation}
for $\ell=0,1,2,\cdots$ and $\abs{m}\le\ell$. To illustrate the properties of the stable region, we also write $Q_{\kappa}: L^2(\mS^{n-1}) \to L^2(B_1)$, $Q_{\kappa}f = P_{\kappa} f|_{B_1} = \kappa^{-1} A_{\kappa} f$ and observe that 
\begin{equation*}
\sigma_{\ell}^{m}(Q_{\kappa}) = \kappa^{-1} \sigma_{\ell}^{m}(A_{\kappa})
\end{equation*}
when $n=3$. By the definition of $A_{\kappa}$, \Cref{thm:1} indicates that 
\begin{subequations}
\begin{align}
& \sigma_{j}(Q_{\kappa}) \sim \kappa^{-1}, && \text{for all} \quad j \lesssim \kappa^{2}, \label{eq:stable-region-Herglotz-denormalized} \\
& \sigma_{j}(Q_{\kappa}) \lesssim \kappa^{-1} \exp \left( -c \kappa^{-1} j^{1/2} \right), && \text{for all} \quad j \gtrsim \kappa^{2}, \label{eq:unstable-region-Herglotz-denormalized}
\end{align}
\end{subequations}
We plot the singular values of $A_{\kappa}$ and $Q_{\kappa}$ in \Cref{fig:Herglotz}. 

\begin{figure}[ht]
\begin{center}
\includegraphics[width=.48\textwidth]{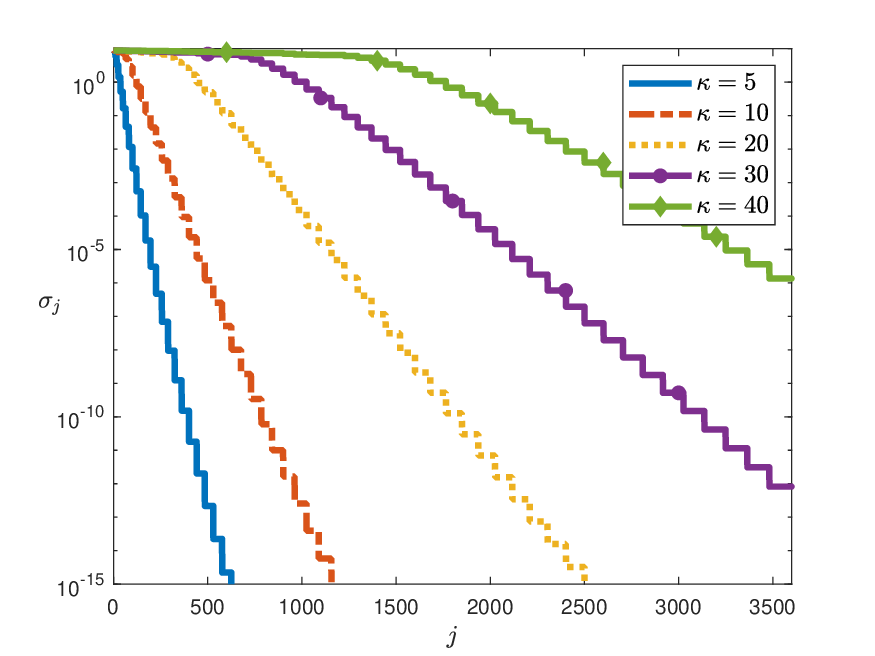} \quad 
\includegraphics[width=.48\textwidth]{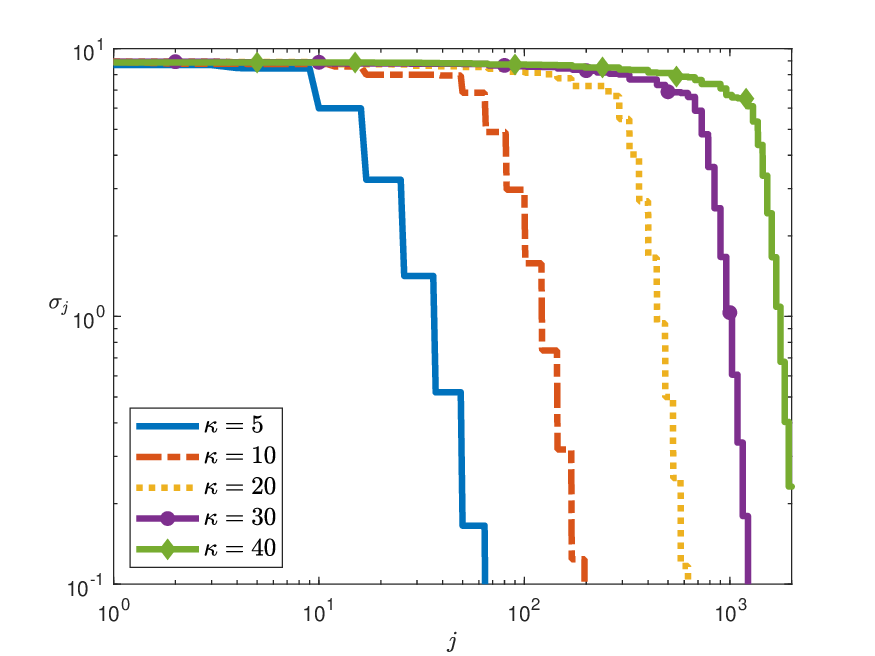}\\
\includegraphics[width=.48\textwidth]{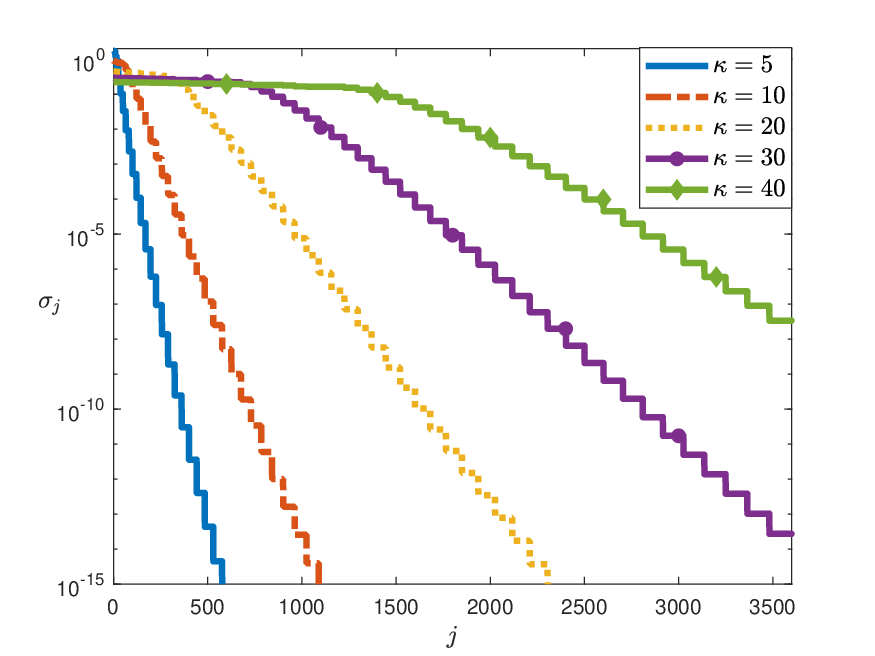} \quad \includegraphics[width=.48\textwidth]{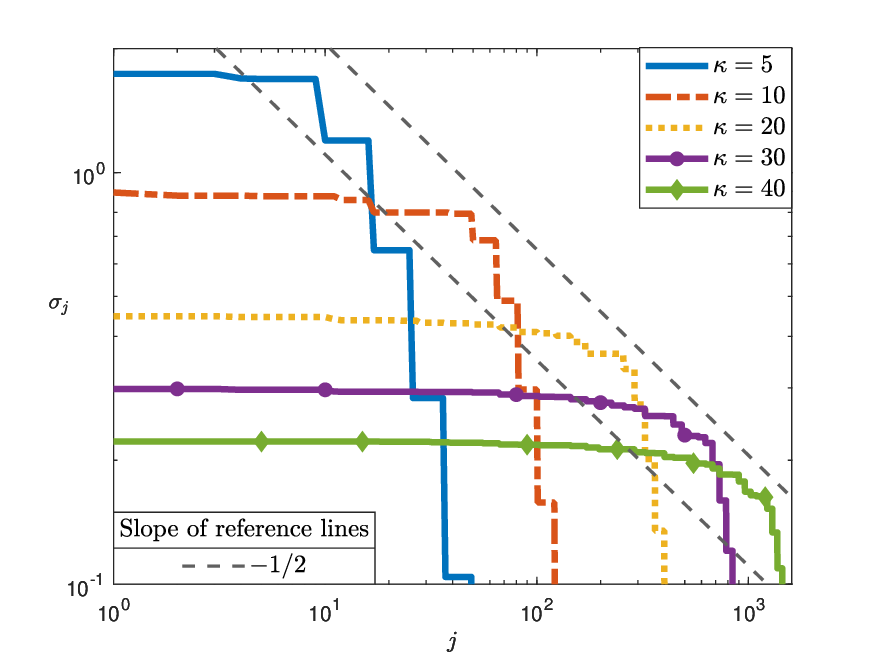} 
\end{center}
\caption{The singular values of $A_{\kappa}$ (first row) and $Q_{\kappa}$ (second row) in descending order (counting multiplicity). In the first column, the $y$-axis is of $\log$-scale, while in the second column, both $x$ and $y$-axis are of $\log$-scale.}
\label{fig:Herglotz}
\end{figure}

In \Cref{fig:Herglotz} one can distinguish the two different regions, i.e.\ stable and unstable regions of singular values. Furthermore the above numerical example verifies \Cref{thm:1} quantitatively with the following perspectives:
\begin{itemize}
\item  \Cref {thm:1} suggests that the singular values for $A_{\kappa}$ in the stable region are roughly constant (independent of $\kappa$), which is shown in the $(1,2)$ subfigure.
\item  Equations \eqref{eq:stable-region-Herglotz-denormalized} and \eqref{eq:unstable-region-Herglotz-denormalized} suggest that the `shift points' of stable and unstable regions of the singular values for $Q_{\kappa}$ are roughly $(\kappa^{2}, \kappa^{-1})$, which means the shift points lie approximately in between the lines of slope $-1/2$ for large $\kappa$. This is shown in the $(2,2)$ subfigure. 
\end{itemize}

\addtocontents{toc}{\SkipTocEntry}
\subsection{Linearized inverse scattering problem}

In this subsection we give numerical evidence for \Cref{thm:2}. For the sake of computability we work in the setting of \Cref{rem:thm2} and replace the domain $B_1$ by $\Omega=[0,1]^n$ when $n=2,3$, with $L^2(\Omega)$ identified with $L^2_{\ol{\Omega}}$. Recall that $J_{\nu}(x)$ is the Bessel function of the first kind of order $\nu$. Let $F_{\kappa}^{*}: {\rm HS} \rightarrow L^2(\Omega)$ be the adjoint operator of $F_{\kappa} : L^{2}(\Omega) \rightarrow {\rm HS}$.  By using the Schwartz kernel \eqref{kkappa_kernel}, properties of the Hilbert-Schmidt norm and the Fourier transform of the spherical surface measure, one sees that 
\begin{equation*}
\begin{aligned}
& (F_{\kappa}^{*} F_{\kappa} (h_{1}),h_{2} )_{L^2(\Omega)} = \left(F_{\kappa} (h_{1}), F_{\kappa}(h_{2})\right)_{{\rm HS}\,\left(L^{2}(\mathbb{S}^{n-1})\right)} \\
& \quad = \kappa^{n-1}\int_{\mathbb{S}^{n-1}}\int_{\mathbb{S}^{n-1}} \hat{h}_1(\kappa(\hat{x}-\hat{z})) \ol{\hat{h}_2(\kappa(\hat{x}-\hat{z}))}\,\rmd S(\hat{z}) \,\rmd S(\hat{x}) \\
& \quad = \kappa^{n-1}\int_{\Omega} \left(\int_{\Omega} \left(\int_{\mathbb{S}^{n-1}}\int_{\mathbb{S}^{n-1}}e^{\bfi\kappa(\hat{z}-\hat{x})\cdot (y-x)}\,\rmd S(\hat{z}) \,\rmd S(\hat{x}) \right) h_1(y) \, \rmd y\right) \overline{h_2(x)} \, \rmd x \\
& \quad = (2\pi)^n \kappa^{n-1} \int_{\Omega} \left(\int_{\Omega} \left|\kappa(x-y)\right|^{2-n} J_{\frac{n}{2}-1}(\kappa |x-y|)^2 h_1(y)  \,\rmd y \right) \overline{h_2(x)} \, \rmd x,
\end{aligned} 
\end{equation*}
for all $h_{1},h_{2}\in L^{2}(\Omega)$. This means that $F_{\kappa}^{*} F_{\kappa}: L^2(\Omega)\rightarrow L^2(\Omega)$ is an convolution operator, and each singular value of $F_{\kappa}$ is equal to the square root of the corresponding eigenvalue of $F_{\kappa}^{*} F_{\kappa}$. 

When $n=2$, we divide the domain $\Omega = [0,1]^{2}$ into $100 \times 100$ identical subdomains $\{\Omega_{i_{1}i_{2}}\}$, and in the spirit of Riemann integral, $F^{*}_{\kappa}F_{\kappa}$ is approximated by the equation 
\begin{equation}
\left( (F_{\kappa}^{*} F_{\kappa})^{\rm approx} \right) (h) = (2\pi)^{2} \kappa\sum_{i_{1},i_{2}=1}^{100} J_{0} (\kappa |x-y_{i_{1}i_{2}}|)^2 h(y_{i_{1}i_{2}}) |\Omega_{i_{1}i_{2}}|,
\end{equation}
where $y_{i_{1}i_{2}} \in \Omega_{i_{1}i_{2}}$. Therefore the singular values of $F_{\kappa} : L^{2}(\Omega) \rightarrow {\rm HS}$ are approximated by $\hat{\sigma}_{j} = \sqrt{|\hat{\lambda}_{j}|}$, where $\hat{\lambda}_{j}$ are the eigenvalues of the matrix  
\begin{equation*}
\left( \left( (F_{\kappa}^{*} F_{\kappa})^{\rm approx} \right) (x_{i_{1}i_{2}}) \right)_{i_{1},i_{2}=1}^{100}.
\end{equation*}

When $n=3$ we approximate the singular values in a similar way by taking  $\hat{\lambda}_{j}$ to be the eigenvalues of the matrix  
\begin{equation*}
\left( \left( (F_{\kappa}^{*} F_{\kappa})^{\rm approx} \right) (x_{i_{1}i_{2}i_{3}}) \right)_{i_{1},i_{2},i_{3}=1}^{40}.
\end{equation*}
Note that for the sake of computability we take the number of subdomains to be $40 \times 40$. Furthermore, to illustrate the properties of the stable region, we also calculate the singular values of the far-field operator $\tilde{F}_{\kappa} \equiv \kappa^{-\frac{n-1}{2}}F_{\kappa}$. From \Cref{rem:thm2} we can derive that
\begin{subequations}
\begin{align}
& \kappa^{-\frac{n-1}{2}}j^{-\frac{1}{2n}} \lesssim \sigma_{j}(\tilde{F}_{\kappa} : L^{2}_{\ol{B}_{1}} \rightarrow {\rm HS}) \lesssim \kappa^{-\frac{n-1}{2}}, && \text{for all} \quad j \lesssim \kappa^{n}, \label{eq:stable-region-farfield-L2-unnormal}\\
& \sigma_{j}(\tilde{F}_{\kappa} : L^{2}_{\ol{B}_{1}} \rightarrow {\rm HS}) \lesssim \kappa^{\alpha(n)-\frac{n-1}{2}} \exp\left(-c\kappa^{-\frac{1}{2}}j^{\frac{1}{2n}}\right), && \text{for all} \quad j \gtrsim \kappa^{n}. 
\label{eq:unstable-region-farfield-L2-unnormal} 
\end{align}    
\end{subequations}
The approximated singular values under different wave numbers $\kappa$ are exhibited in \Cref{fig:far-field}.

\begin{figure}[ht]
\begin{center}
\includegraphics[width=.48\textwidth]{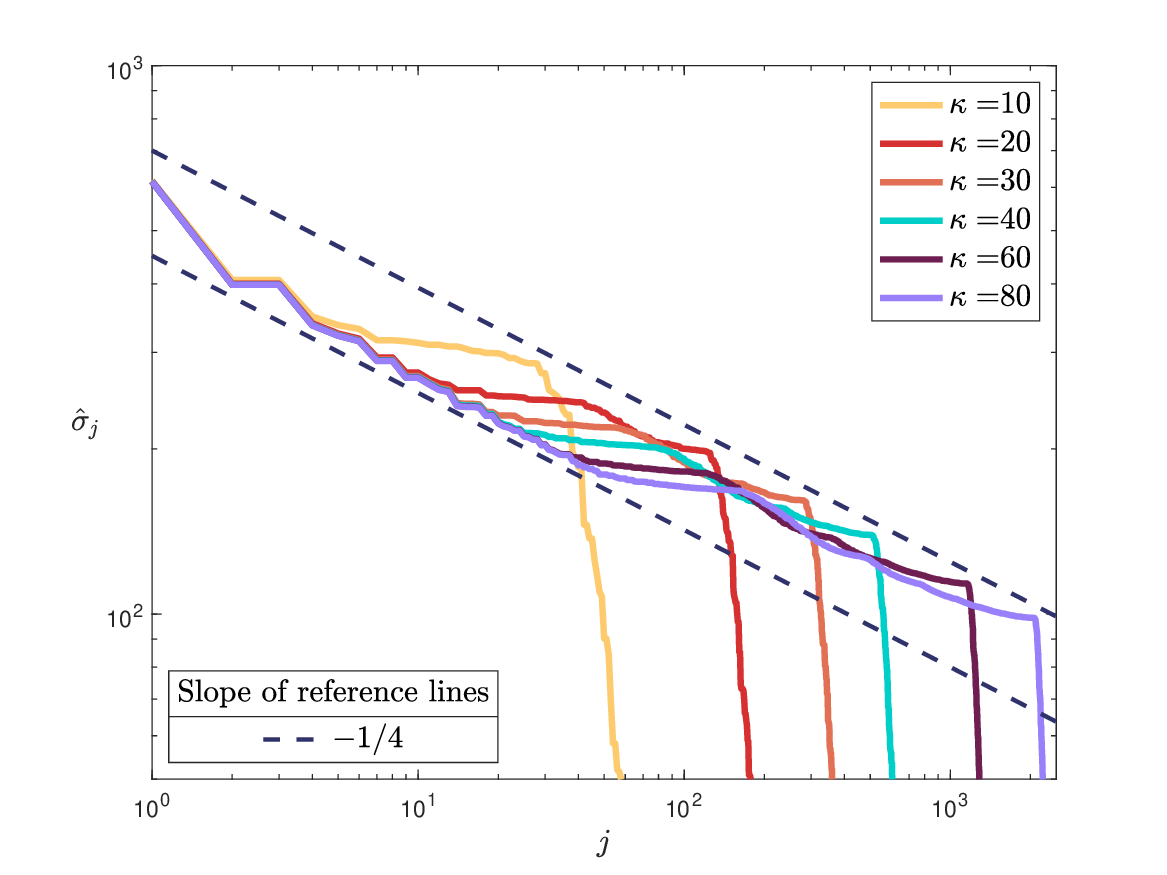} \quad \includegraphics[width=.48\textwidth]{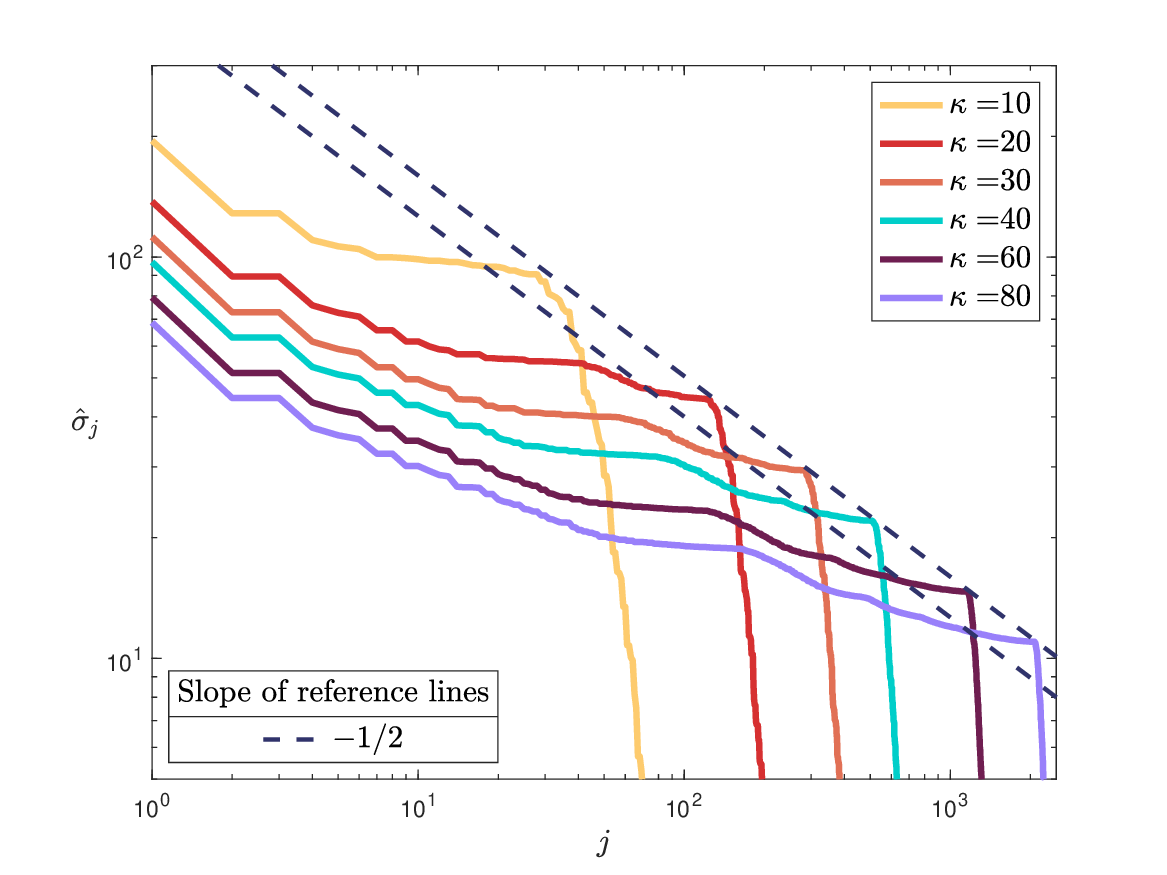}\\
\includegraphics[width=.48\textwidth]{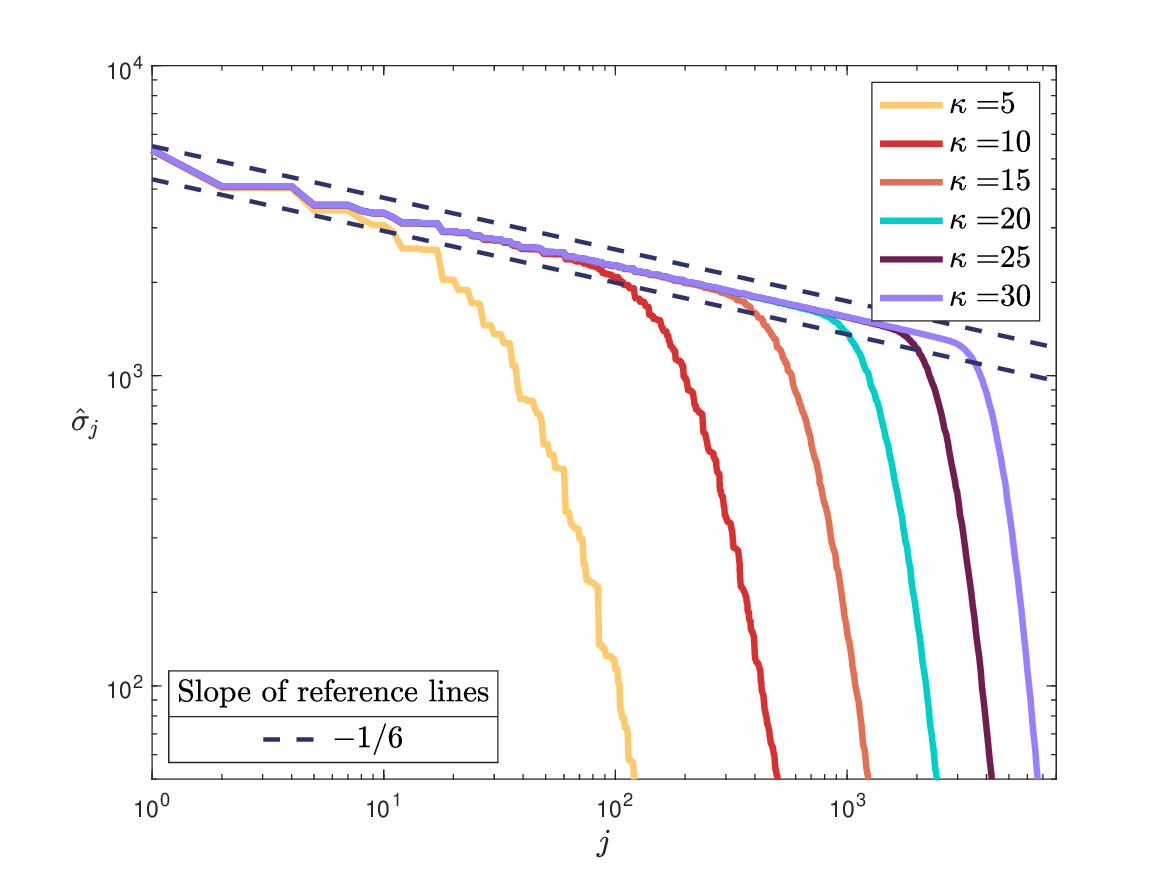} \quad \includegraphics[width=.48\textwidth]{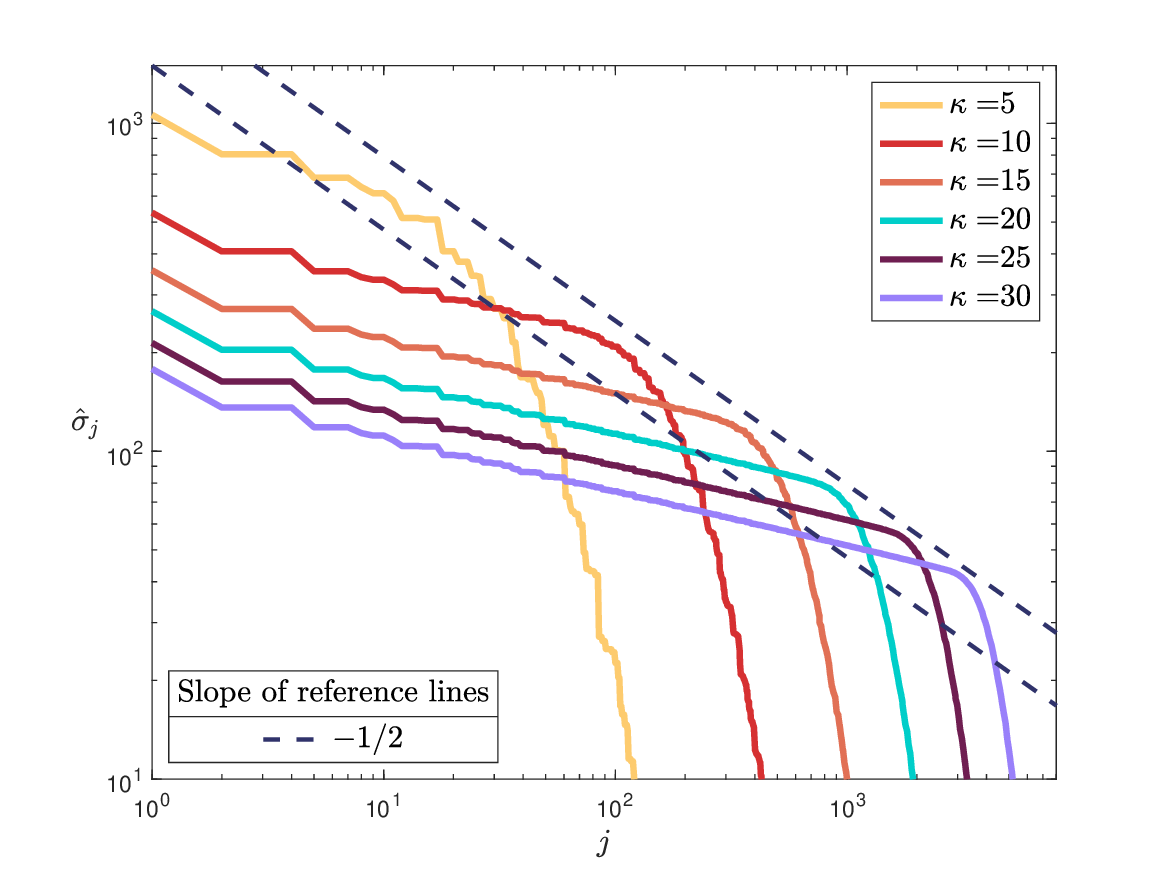}
\end{center}
\caption{Plot of singular values $\hat{\sigma}_{j}$ when $n=2$ (first row) and $n=3$ (second row).  The singular values for normalized far-field operator $F_{\kappa}$ are given in the first column while operators $\tilde{F}_{\kappa}$ are given in the second column. In each figure, both $x$ and $y$-axis are of $\log$-scale.}
\label{fig:far-field}
\end{figure}

\begin{figure}[ht]
\begin{center}
\includegraphics[width=.48\textwidth]{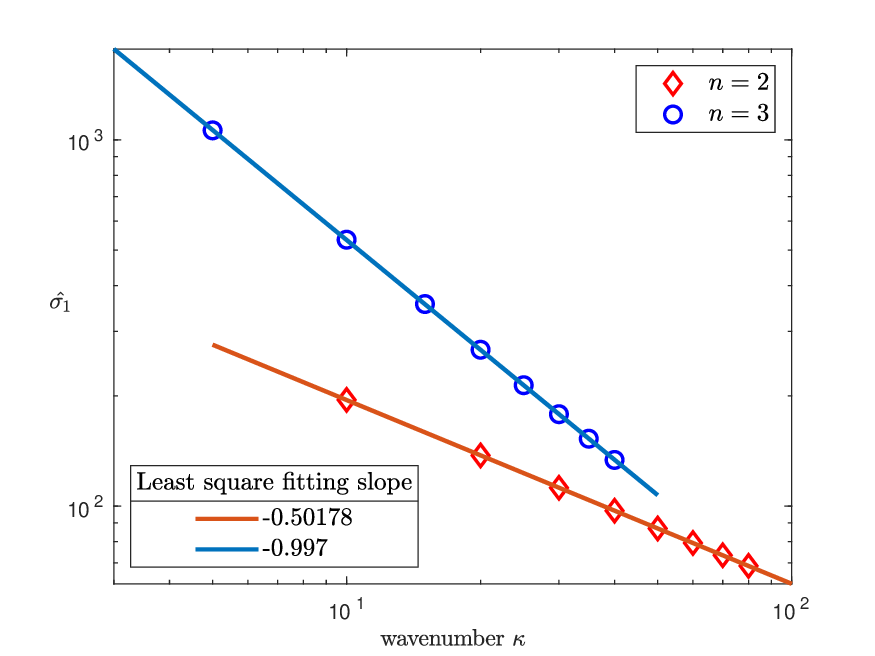} 
\end{center}
\caption{Plot of $\hat{\sigma}_{1}$ versus wave number $\kappa$}
\label{fig:far-field-fitting}
\end{figure}

The numerical results verify \Cref{rem:thm2} quantitatively with the following perspectives:  
\begin{itemize}
\item The dotted reference lines in the first column of \Cref{fig:far-field} have slope $-\frac{1}{2n}$, which agree with the asymptotic behavior of the singular values in stable region (corresponding to the term $j^{-\frac{1}{2n}}$ in the lower bound of \eqref{eq:stable-region-farfield-L2}, independent of wavenumber $\kappa$). 
\item One also sees that the the singular values start to behave differently near $j = \kappa^{n}$, which corresponds to the point $\left( \log j , \log (\kappa^{\frac{1-n}{2}}j^{-\frac{1}{2n}})  \right) = \left(  n \log \kappa , -\frac{n}{2} \log \kappa \right)$ on the log-log plot. We marked them using dotted reference lines with slope $-\frac{1}{2}$ in the second column of \Cref{fig:far-field}. 
\item When $j=1$, we have the bound $\sigma_{1} \gtrsim \kappa^{\frac{1-n}{2}}$ for the operator $\tilde{F}_{\kappa}$. Therefore, in the log-log plot, we should have the straight line parametrized by $\left( \log \kappa , \log (\sigma_{1}) \right) = \left( \log \kappa , \frac{1-n}{2} \log \kappa \right)$. The slope of such line (in the log-log plot) is $\frac{\log (\sigma_{1})}{\log \kappa} = \frac{1-n}{2}$. By using least squares fitting, we obtain a line with slope $-0.50178$ for $n=2$, and with slope $-0.99701$ for $n=3$, see \Cref{fig:far-field-fitting}, which meet our expectation quite well. 
\end{itemize}

\end{sloppypar}

\end{document}